\newcommand{\pl}[1]{\foreignlanguage{polish}{#1}}
\newcommand{\vj}{j}
\newcommand{\mR}{\mathcal{R}}
\newcommand{\mK}{\mathcal{K}}
\newcommand{\la}{\lambda}
\newcommand{\R}{\mathbb{R}}
\newcommand{\CC}{\mathbb{C}}
\newcommand{\N}{\mathbb{N}}
\newcommand{\Q}{\mathbb{Q}}
\newcommand{\mH}{\mathcal{H}}
\newcommand{\mD}{\mathcal{D}}
\newcommand{\mF}{\mathcal{F}}
\newcommand{\mP}{\mathcal{P}}
\newcommand{\mS}{\mathcal{S}}
\newcommand{\mM}{\mathcal{M}}
\newcommand{\tR}{\widetilde{R}}
\newcommand{\tK}{\widetilde{K}}
\newcommand{\tg}{\widetilde{\gamma}}
\definecolor{green}{rgb}{0,0.5,0}
\newcommand{\ind}[1]{\mathbbm{1}_{#1}}
\newtheorem{theorem}{Theorem}[section]
\newtheorem{lemma}[theorem]{Lemma}
\newtheorem{pro}[theorem]{Proposition}
\newtheorem{cor}[theorem]{Corollary}
\theoremstyle{remark}
\theoremstyle{definition}
\numberwithin{equation}{section}
\title[Dimension-free $L^p$ estimates for higher order maximal Riesz transforms]{Dimension-free $L^p$ estimates for higher order maximal Riesz transforms in terms of the Riesz transforms}
\author{Maciej Kucharski}
\address{Maciej Kucharski\\
	Instytut Matematyczny\\
	Uniwersytet \pl{Wroc{\l}awski}\\
	Plac Grun\-waldzki 2\\
	50-384 \pl{Wroc{\l}aw}\\
	Poland}
\email{maciej.kucharski@math.uni.wroc.pl}
\author{B{\l}a{\.z}ej Wr{\'o}bel}
\address{B{\l}a{\.z}ej Wr{\'o}bel\\
		Instytut Matematyczny\\
			Polska Akademia Nauk \\
			\'Sniadeckich 8\\
			00–656 Warszawa\\
	Poland \&
	Instytut Matematyczny\\
	Uniwersytet \pl{Wroc{\l}awski}\\
	Plac Grun\-waldzki 2\\
	50-384 \pl{Wroc{\l}aw}\\
	Poland}
\email{blazej.wrobel@math.uni.wroc.pl}
\author{Jacek Zienkiewicz}
\address{Jacek Zienkiewicz\\
	Instytut Matematyczny\\
	Uniwersytet \pl{Wroc{\l}awski}\\
	Plac Grun\-waldzki 2\\
	50-384 \pl{Wroc{\l}aw}\\
	Poland}
\email{jacek.zienkiewicz@math.uni.wroc.pl}
\subjclass[2020]{42B25, 42B20, 42B15}
\keywords{higher order Riesz transform, maximal function, dimension-free estimates}
\thanks{Maciej Kucharski and B{\l}a{\.z}ej Wr{\'o}bel were supported by the National Science Centre (NCN), Poland  research project Preludium Bis 2019/35/O/ST1/00083. B{\l}a{\.z}ej Wr{\'o}bel was also supported by the National Science Centre (NCN), Poland  research project Sonata Bis 2022/46/E/ST1/00036.}
\begin{document}

	\begin{abstract}
	We prove a dimension-free $L^p(\R^d)$, $1<p<\infty$, estimate for the vector of higher order maximal Riesz transforms in terms of the corresponding Riesz transforms. This implies a dimension-free $L^p(\R^d)$ estimate for the vector of maximal Riesz transforms in terms of the input function. We also give explicit estimates for the dependencies of the constants on $p$ when the order is fixed.  Analogous dimension-free estimates are also obtained for single higher order Riesz transforms with an improved estimate of the constants. 
	\end{abstract}
	
	\maketitle

	\section{Introduction}
	\label{sec:int}
	Fix a positive integer $k$ and denote by $\mH_k = \mH_k^d$ the space of spherical harmonics of degree $k$ on the Euclidean sphere $S^{d-1}.$  Throughout the paper we identify $P\in \mH_k$ with the corresponding solid spherical harmonic. Via this identification $P\in \mH_k$ is a harmonic polynomial on $\R^d$ which is homogeneous of degree $k,$ i.e.\ satisfies $P(x)=|x|^k P(x/|x|),$ $x\in \R^d.$    

For $P \in \mH_k$ the Riesz transform $R=R_P$ is defined by the kernel
\begin{equation}
	\label{eq:KP}
	K_P(x)=K(x) = \gamma_k \frac{P(x)}{\abs{x}^{d+k}} \qquad\textrm{ with } \qquad  \gamma_k = \frac{\Gamma\left( \frac{k+d}{2}\right)}{\pi^{d/2}\Gamma\left( \frac{k}{2}\right)},
\end{equation}
more precisely,
\begin{equation} \label{eq:R}
	R_P f(x) = \lim_{t \to 0^+} R_P^t f(x), \qquad\textrm{ where } \qquad R_P^t f(x) = \gamma_k \int_{\abs{y}>t} \frac{P(y)}{\abs{y}^{d+k}} f(x-y) \, dy.
\end{equation}
The operator $R_P^t$ is called the truncated Riesz transform.  In the particular case of $k=1$ and $P_j(x)=x_j$ the operators $R_{P_j},$ $j=1,\ldots,d,$ coincide with the classical first order Riesz transforms. It is well known, see \cite[p. 73]{stein}, that the Fourier multiplier associated with the Riesz transform $R_P$ equals 
\begin{equation} \label{eq:m}
	m_P(\xi) = (-i)^k P(\xi/|\xi|),\qquad \xi\in \R^d.
\end{equation}
By the above formula $m_P$ is bounded and 
Plancherel's theorem implies the $L^2(\R^d)$ boundedness of $R_P.$
The $L^p(\R^d)$ boundedness of the single Riesz transforms $R_P$ for $1<p<\infty$ follows from the Calder\'on--Zygmund method of rotations \cite{CZ2}.

The systematic study of the dimension-free $L^p$ bounds
for the Riesz transforms has begun in the seminal paper of
E. M. Stein \cite{stein_riesz}. There he proved a dimension-free $\ell^2$ vector-valued estimate
for the vector of the first order Riesz transforms
\begin{equation}
	\label{eq:Riesz0}
	\norm{\bigg(\sum_{j=1}^d |R_jf|^2\bigg)^{1/2}}_{L^p(\R^d)}\leqslant C_p\, \|f\|_{L^p(\R^d)},\qquad 1<p<\infty.
\end{equation}
In the inequality above $R_j,$ $j=1,\ldots,d,$ denote the first order Riesz transforms defined via \eqref{eq:R} with $P_j(x)=x_j$ and the constant $C_p$ is independent of the dimension $d.$ 

Stein's result has been extended to many other settings. The analogue of the dimension-free inequality \eqref{eq:Riesz0} has been also proved for higher order Riesz transforms, see \cite[Th\'{e}or\`{e}me 2]{duo_rubio}. The optimal constant $C_p$ in \eqref{eq:Riesz0} remains unknown when $d\geqslant 2;$ however  the best results to date  given in \cite{BW1} (see also \cite{DV})  established the correct order of the dependence on $p$. We note that the explicit values of $L^p(\R^d)$ norms of the single first order Riesz transforms $R_j,$ $j=1,\ldots,d,$ were obtained by Iwaniec and Martin \cite{iwaniec_martin} based on the method of rotations.

In this paper we study  the relation between $R_P$ and the maximal Riesz transform defined by
\begin{equation*}
	R_P^* f(x) = \sup_{t > 0} \abs{R_P^t f(x)}.
\end{equation*}
Clearly, we have the pointwise inequality $R_P f(x)\leqslant R_P^*f(x).$ 
In a series of papers \cite[Theorem 1]{mateu_verdera} (first order Riesz transforms), \cite[Section 4]{mopv} (odd order higher Riesz transforms), and \cite[Section 2]{mov1} (even order higher Riesz transforms), J. Mateu, J. Orobitg, C. P\'erez, and J. Verdera proved that also a reverse inequality holds in the $L^p(\R^d)$ norm. 
Namely, together the results of  \cite{mateu_verdera,mopv,mov1} imply that for each $1<p<\infty$ there exists a constant $C(p,k,d)$ such that
\begin{equation} \label{eq:mat_ver}
	\norm{R_P^* f}_{L^p(\R^d)}\leqslant C(p,k,d)\norm{R_P f}_{L^p(\R^d)}
\end{equation}
for all $f\in L^p(\R^d)$. As a matter of fact, the estimate \eqref{eq:mat_ver} has been proved in \cite{mateu_verdera,mopv,mov1}  for more general singular integral operators with
even kernels \cite{mov1} or with odd kernels \cite{mopv}. However, even for the higher order Riesz transforms, the values of $C(p,k,d)$ that follow from these papers grow exponentially with the dimension. In view of \cite{Jan}, the question about an improved rate arises naturally.

The first step towards a dimension-free estimate of the constant $C(p,k,d)$ in \eqref{eq:mat_ver} has been made by the first and the second author, who proved that when $p=2$, in  \eqref{eq:mat_ver} one may take an explicit dimension-free constant $C(2,1,d)\leqslant 2\cdot 10^8,$ see \cite[Theorem 1.1]{kw}. The arguments applied in \cite{kw} relied on Fourier transform estimates together with 
square function techniques developed by Bourgain \cite{bou1}, and Bourgain, Mirek, Stein, and Wr\'obel \cite{BMSW1,bourgain_wrobel}, for studying dimension-free estimates for maximal functions associated with symmetric convex bodies.

Recently Liu, Melentijević, and Zhu \cite{LiuMeZhu} extended the results of \cite{kw} and proved that $C(p,1,d)\leqslant (2+1/\sqrt{2})^{2/p},$ for $p\ge 2.$ An important ingredient of their argument is the positivity of the transition kernels $M_1^t,$ which is not at all clear in \cite{kw}.


In this paper we prove that the dimension-free estimate of the form \eqref{eq:mat_ver} and its vector-valued generalization hold for Riesz transforms of arbitrary order $k$ and for all $1<p<\infty$. The main result of our paper is the following square function estimate of the vector of maximal Riesz transforms in terms of the Riesz transforms.

%
%




\begin{theorem} \thlabel{thm1}
	Take $p \in (1, \infty)$ and let $k\leqslant d$ be a non-negative integer. Let $\mP_k$ be a subset of $\mH_k.$  Then there is a constant $A(p,k)$ independent of the dimension $d$ and such that
	\begin{equation*} 
		\norm{\left(\sum_{P\in \mP_k} |R_P^* f|^2\right)^{1/2}}_{L^p(\R^d)} \leqslant A(p,k) \norm{ \left(\sum_{P\in \mP_k} |R_P f|^2\right)^{1/2}}_{L^p(\R^d)},
	\end{equation*}
	where $f\in L^p(\R^d).$ Moreover, for fixed $k$ we have
	\[
		A(p,k) = O(p^{5/2+k/2}) \quad \textrm{as } \ p\to \infty \qquad \textrm{and} \qquad A(p,k) = O((p-1)^{-5/2-k/2}) \quad \textrm{as } \ p\to 1.
	\]
\end{theorem}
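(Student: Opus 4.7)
The plan is to reduce the vector-valued maximal estimate to a dimension-free scalar maximal inequality for a single radial convolution kernel (independent of $P$), and then to invoke the square function machinery of Bourgain--Mirek--Stein--Wr\'obel.

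The first step is a Fourier-analytic factorisation of the truncated multiplier. Writing $K_P(y)\ind{|y|>t} = \gamma_k P(y/|y|)|y|^{-d}\ind{|y|>t}$ exhibits the kernel as a product of the spherical harmonic $P$ with a radial profile, so the Bochner--Hecke identity yields
\[
\widehat{K_P\ind{|y|>t}}(\xi) = m_P(\xi)\, h_k(t|\xi|),
\]
where $h_k$ is a \emph{scalar} radial function (essentially a Bessel-tail integral of order $d/2+k-1$) that depends on $k$ and $d$ but not on $P$. Equivalently, defining the radial kernel $\mathcal{M}_k^t$ by $\widehat{\mathcal{M}_k^t}(\xi)=h_k(t|\xi|)$, I get the pointwise convolution identity $R_P^t f = \mathcal{M}_k^t \ast R_P f$, in which all $P$-dependence has been moved inside $R_P f$.

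Because $\mathcal{M}_k^t$ does not depend on $P$, the vector-valued statement of the theorem reduces, via a standard vector-valued extension of a convolution maximal operator, to the \emph{scalar} inequality $\|\sup_{t>0}|\mathcal{M}_k^t\ast g|\|_{L^p(\R^d)} \leqslant A(p,k)\|g\|_{L^p(\R^d)}$, which is then applied with $g = R_P f$ for each $P$ and summed. To prove the scalar bound I would split $\sup_{t>0}$ into a lacunary supremum $\sup_{n\in\mathbb{Z}}|\mathcal{M}_k^{2^n}\ast g|$ and a long square function $(\sum_{n\in\mathbb{Z}} \sup_{2^n\leqslant t<2^{n+1}}|(\mathcal{M}_k^t-\mathcal{M}_k^{2^n})\ast g|^2)^{1/2}$, control the lacunary piece by combining uniform $L^\infty$ bounds on $h_k$ with the numerical inequalities of \cite{bourgain_wrobel,BMSW1}, and bound the square function via the Fourier derivative estimate $|t\partial_t h_k(t|\xi|)|\lesssim 1$. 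Marcinkiewicz interpolation from the resulting $L^2$ estimates then converts these into the claimed $O(p^{5/2+k/2})$ and $O((p-1)^{-5/2-k/2})$ dependencies.

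The main obstacle is producing quantitative, \emph{dimension-free} control of $h_k$ and of $r\partial_r h_k(r)$: although $h_k$ is a tail integral of a Bessel function whose order $d/2+k-1$ blows up with the dimension, the final $L^p$ bound must be uniform in $d$. The $p^{k/2}$ factor very likely reflects an unavoidable $k$-dependent loss coming from the oscillatory Bessel asymptotics interacting with the square function derivative estimate. A secondary subtlety is that, in contrast with the case $k=1$ treated in \cite{LiuMeZhu}, the kernel $\mathcal{M}_k^t$ is no longer nonnegative when $k\geqslant 2$, so the positivity-based shortcut is unavailable and the entire argument must proceed through oscillatory Fourier estimates.
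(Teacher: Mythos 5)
Your factorisation step and the reduction to a scalar maximal inequality for a single radial kernel $\mathcal{M}^t_k$ are sound and essentially reproduce \thref{pro:fact} of the paper (the paper works on the kernel side following Mateu--Orobitg--P\'erez--Verdera rather than through Bochner--Hecke, but the resulting radial convolution operator is the same). The genuine gap lies in how you propose to prove the resulting scalar inequality. The lacunary/long-variation split controlled by the Bourgain--Mirek--Stein--Wr\'obel square functions is exactly the $p=2$ argument from \cite{kw}: those square function estimates are Plancherel-based and only yield a dimension-free $L^2$ bound. You then invoke ``Marcinkiewicz interpolation from the resulting $L^2$ estimates'' to reach all $1<p<\infty$, but you supply no second endpoint; interpolation from a single $L^2$ bound is not a step, and the natural candidates (weak $(1,1)$ via Calder\'on--Zygmund theory, or an $L^\infty$ bound) carry dimension-dependent constants for such maximal singular integrals. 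The paper itself records that the only known $L^p$ extension of the $k=1$ result of \cite{kw}, due to Liu, Melentijevi\'c and Zhu, rests on the positivity of the transition kernel $M^t_1$; since you correctly observe that this positivity fails for $k\geqslant 2$, that door is closed, and the interpolation claim is left unsupported.

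The paper takes a fundamentally different route designed precisely to reach all of $1<p<\infty$. After the factorisation it does not attack $M^*_k$ directly: it first averages over $SO(d)$ to write $M^t_k$ as a constant multiple of $\int_{SO(d)}(R^t)_U\,d\mu(U)$ with $R^t=\sum_{j\in I}R^t_jR_j$ built from monomial higher-order Riesz transforms (\thref{pro:av}), then extends to $\CC^d$ and applies the complex method of rotations of Iwaniec--Martin to express $\tR^t$ through directional $k$-th powers of the complex Hilbert transform (\thref{pro:hzt}), and finally restricts back to $\R^d$, separately handling the discrepancy between the restricted kernel $\mK^t_j$ and the true truncated kernel $K^t_j$. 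The method of rotations is intrinsically an $L^p$ tool: it reduces matters to one-dimensional directional maximal estimates that hold for all $1<p<\infty$ with constant $O(p^*)$, and the extra powers of $p$ in $A(p,k)$ are accrued from the Duoandikoetxea--Rubio de Francia spherical-harmonic $L^q$-versus-$L^2$ comparison (cost $q^{k/2}$), and from Khintchine's inequality in the vector-valued step --- not from Bessel asymptotics as you conjecture, since Bessel functions do not appear once one moves to the kernel side and averages. To repair your proposal you would need to replace the interpolation step with an actual $L^p$ mechanism of this kind; as written, it does not reach the conclusion for $p\neq 2$.
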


In particular, if $\mP_k$ contains one element $P,$ then \thref{thm1} immediately gives
\begin{equation*}
	\norm{ R_P^* f}_{L^p(\R^d)} \leqslant A(p,k) \norm{  R_P f}_{L^p(\R^d)}.
\end{equation*}
In this case however, we can slightly improve the constant $A(p,k).$
\begin{theorem} \thlabel{thm2}
	Take $p \in (1, \infty)$ and let $k\leqslant d$ be a non-negative integer. Let $P$ be a spherical harmonic of degree $k.$ Then there is a constant $ B(p,k)$ independent of the dimension $d$ and such that
	\begin{equation*} 
		\norm{ R_P^* f}_{L^p(\R^d)} \leqslant B(p,k) \norm{  R_P f}_{L^p(\R^d)},
	\end{equation*}
	where $f\in L^p(\R^d).$ Moreover, for fixed $k$ we have
	\[
		B(p,k) = O(p^{2+k/2}) \quad \textrm{as } \ p\to \infty \qquad \textrm{and} \qquad B(p,k) = O((p-1)^{-2-k/2}) \quad \textrm{as } \ p\to 1.
	\]
\end{theorem}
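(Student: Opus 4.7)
The plan is to reduce the problem to a dimension-free maximal estimate for a \emph{radial} convolution applied to $g := R_P f$, along the same lines as one would prove \thref{thm1} but saving the extra $p^{1/2}$ that is needed when passing from scalar to vector-valued square functions. The starting point is the factorization of the truncated Riesz multiplier: a Fourier calculation rooted in the Bochner-Hecke identity for solid spherical harmonics yields
\[
\widehat{K_P^t}(\xi) \;=\; m_P(\xi)\,\tM_{k,d}(t|\xi|),
\]
with $m_P$ the multiplier from \eqref{eq:m} and $\tM_{k,d}\colon(0,\infty)\to\R$ a scalar-valued radial profile. Inverting, we obtain a radial kernel $M_k^t$ with $R_P^t f = M_k^t \ast R_P f$ and $M_k^t(x)=t^{-d}M_k(x/t)$, so that \thref{thm2} is equivalent to the dimension-free bound
\[
\Bigl\|\sup_{t>0}|M_k^t \ast g|\Bigr\|_{L^p(\R^d)} \;\leqslant\; B(p,k)\,\|g\|_{L^p(\R^d)}.
\]

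I would prove this via the now-standard long/short-range splitting at dyadic scales:
\[
\sup_{t>0}|M_k^t \ast g|
\;\leqslant\; \sup_{n\in\mathbb{Z}}|M_k^{2^n} \ast g|
\,+\, \Bigl(\sum_{n\in\mathbb{Z}}\sup_{t\in[2^n,2^{n+1}]}|M_k^t \ast g - M_k^{2^n}\ast g|^{2}\Bigr)^{1/2}.
\]
For the dyadic piece the aim is to represent $\tM_{k,d}(s)$ as a superposition of heat multipliers $e^{-c s^{2}}$ against a signed measure of bounded total variation (a subordination formula), so that the supremum is dominated by the heat-semigroup maximal function and dimension-free $L^p$ bounds of order $O(p)$ follow from Cowling's transference theorem. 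For the short-range square function, writing $M_k^t - M_k^{2^n} = \int_{2^n}^{t}\partial_s M_k^s\,ds$ and applying Plancherel in $t$ reduces matters to a Littlewood-Paley $g$-type estimate for the radial multipliers $s\,\partial_s \tM_{k,d}(s|\cdot|)$; pointwise-in-$\xi$ bounds on these multipliers with polynomial growth in $k$ then yield a contribution of $O(p^{1+k/2})$. Multiplying the two factors gives $B(p,k)=O(p^{2+k/2})$ as $p\to\infty$, and the $(p-1)^{-2-k/2}$ blow-up as $p\to 1$ is obtained by interpolation with a weak-type endpoint or via a linearization/duality argument applied to the bound just derived.

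The principal technical obstacle is the sharp analysis of $\tM_{k,d}$. For $k=1$ the authors of \cite{LiuMeZhu} established positivity of the radial kernel $M_1^t$, which trivializes the long-range step; for $k\geqslant 2$ this positivity is lost, so the subordination formula must tolerate a signed measure whose total variation one has to control \emph{uniformly in $d$}. Equally crucial is a quantitative estimate of the form $|s\,\partial_s \tM_{k,d}(s|\xi|)|\leqslant C_k\,\varphi(s|\xi|)$ with $\varphi$ integrable against $ds/s$ and $C_k$ growing only polynomially in $k$, once again uniformly in $d$; it is precisely this polynomial control that prevents the Littlewood-Paley contribution from blowing up exponentially in $k$ or picking up a dimensional factor, and thereby locks in the exponent $2+k/2$.
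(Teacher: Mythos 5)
Your first step — the factorization $R_P^t = M_k^t(R_P)$ with $M_k^t$ a radial convolution operator — agrees with the paper's \thref{pro:fact}, and so does the consequent reduction to a dimension-free maximal bound for $g \mapsto \sup_t|M_k^t*g|$. From that point on, however, you take a route that is essentially \emph{orthogonal} to the one the paper follows. The paper never touches the Fourier profile $\widehat{b_k^t}$ at all: \thref{pro:av} averages $M_k^t$ over $SO(d)$ to write it (up to a uniformly bounded constant) as $\int_{SO(d)} (\sum_{j\in I} R_j^t R_j)_U\,d\mu(U)$, and the only structural fact about $M_k^t$ that is used is the radiality of its symbol. Everything afterwards runs through the complex method of rotations of Iwaniec--Martin: $\tR^t$ is written as an average of directional truncated complex Hilbert transforms $H^t_\zeta$, where Calder\'on--Zygmund theory gives an $O(p^*)$ bound for $H_\zeta^*$ and the Duoandikoetxea--Rubio de Francia lemma supplies the $(\cdot)^{k/2}$ factors, and a careful kernel-level restriction from $\CC^d$ to $\R^d$ (including the separate analysis of the difference operator $D^t$) finishes the argument.

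The gap in your proposal is precisely the piece you flag as ``the principal technical obstacle'' — but flagging it is not resolving it, and it is not a technicality. Both of your core ingredients are unproved and, for $k\geqslant 2$, there is no evidence they are true in the required uniformity. \textbf{(i)} A subordination formula $\tM_{k,d}(s)=\int e^{-cs^2}\,d\nu_d(c)$ with $\|\nu_d\|_{TV}\lesssim_k 1$ uniformly in $d$ is a strong complete-monotonicity-type structural statement about the radial profile of $\widehat{b_{k,d}}$; for $k=1$ Liu--Melentijevi\'c--Zhu proved $b_{1,d}\geqslant 0$, which trivializes this, but that positivity is specific to $k=1$ and your proposal gives no mechanism for producing a signed representing measure with dimension-free total variation for higher $k$. \textbf{(ii)} The pointwise estimate $|s\,\partial_s\tM_{k,d}(s|\xi|)|\leqslant C_k\varphi(s|\xi|)$ with $C_k$ polynomial in $k$ and, crucially, independent of $d$, is exactly the kind of delicate Fourier analysis of $b_{k,d}$ that made the $p=2$, $k=1$ paper \cite{kw} technically demanding, and extending it to all $k$ would require controlling $\widehat{h}$ and $R_j[y_j h]$ (for $k$ odd) or Bessel-type transforms of $(\alpha_0+\cdots+\alpha_{N-1}|x|^{2N-2})\ind{B}$ (for $k$ even) with explicit dimension-free bounds that are not in the literature. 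There are two further soft spots: the short-range piece as you set it up is an $L^2$ Plancherel estimate, and you have not said how it becomes an $L^p$ bound with the claimed $p$-dependence (this typically needs vector-valued Calder\'on--Zygmund or Littlewood--Paley machinery whose constants must again be tracked in $d$ and $k$); and the ``sum'' decomposition $\sup_t\leqslant\sup_n|\cdot|+(\sum_n\sup_{[2^n,2^{n+1}]}|\cdot|^2)^{1/2}$ is additive, so if the two pieces really were $O(p)$ and $O(p^{1+k/2})$ the total would be $O(p^{1+k/2})$, not $O(p^{2+k/2})$ — the exponent bookkeeping in the proposal does not cohere. The $p\to 1$ regime is left to an unspecified interpolation or duality, which is not a proof.
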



Our last main result follows from a combination of \thref{thm1} with  a result of Duoandikoetxea and Rubio de Francia \cite[Th\'eor\`eme 2]{duo_rubio}. Denote by $a(d,k)$ the dimension of $\mH_k$ and let $\{Y_j\}_{j=1,\ldots,a(d,k)}$ be an orthogonal basis of $\mH_k$ normalized by the condition
\[
\frac{1}{\sigma(S^{d-1})}\int_{S^{d-1}} |Y_j(\theta)|^2\,d\sigma(\theta) =\frac{1}{a(d,k)};
\]
here $d\sigma$ denotes the (unnormalized) spherical measure. 
\begin{cor}
	\thlabel{cor:dr}
	Take $p \in (1, \infty)$ and let $k\leqslant d$ be a non-negative integer. Then there is a constant $G(p,k)$ independent of the dimension $d$ and such that
	\begin{equation*} 
		\norm{\left(\sum_{j=1}^{a(d,k)} |R_{Y_j}^* f|^2\right)^{1/2}}_{L^p(\R^d)} \leqslant G(p,k) \|f\|_{L^p(\R^d)},
	\end{equation*}
	where $f\in L^p(\R^d).$ Moreover, for fixed and odd $k$ we have
	\[
		G(p,k) = O(p^{7/2+k/2}) \quad \textrm{as } \ p\to \infty \qquad \textrm{and} \qquad G(p,k) = O((p-1)^{-7/2-k}) \quad \textrm{as } \ p\to 1
	\] 
	and for even $k$ we have
	\[
		G(p,k) = O(p^{9/2+k/2}) \quad \textrm{as } \ p\to \infty \qquad \textrm{and} \qquad G(p,k) = O((p-1)^{-9/2-k}) \quad \textrm{as } \ p\to 1.
	\] 
	
\end{cor}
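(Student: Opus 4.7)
The plan is to derive this corollary by composing \thref{thm1} with the dimension-free vector-valued Riesz transform estimate of Duoandikoetxea and Rubio de Francia \cite[Th\'eor\`eme 2]{duo_rubio}. Specifically, I would apply \thref{thm1} with the choice $\mP_k = \{Y_j : 1 \leqslant j \leqslant a(d,k)\}$ and then bound the resulting inner square function of the non-maximal Riesz transforms directly by $\|f\|_{L^p(\R^d)}$. Since Theorem~1 of the paper is already stated in the required vector-valued form, no further geometric work is needed on the maximal side.

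Concretely, \thref{thm1} gives
\begin{equation*}
\bigg\|\bigg(\sum_{j=1}^{a(d,k)} |R_{Y_j}^* f|^2\bigg)^{1/2}\bigg\|_{L^p(\R^d)} \leqslant A(p,k) \bigg\|\bigg(\sum_{j=1}^{a(d,k)} |R_{Y_j} f|^2\bigg)^{1/2}\bigg\|_{L^p(\R^d)},
\end{equation*}
while the Duoandikoetxea--Rubio de Francia theorem provides a dimension-free constant $D(p,k)$ with
\begin{equation*}
\bigg\|\bigg(\sum_{j=1}^{a(d,k)} |R_{Y_j} f|^2\bigg)^{1/2}\bigg\|_{L^p(\R^d)} \leqslant D(p,k) \|f\|_{L^p(\R^d)},
\end{equation*}
valid for precisely the $L^2$-normalization of $\{Y_j\}$ stated in the corollary (which makes the reproducing sum $\sum_j |Y_j(\theta)|^2$ constant on $S^{d-1}$). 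Composing the two inequalities yields the bound with $G(p,k)=A(p,k)\,D(p,k)$.

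The substantive step is tracking the dependence of $D(p,k)$ on $p$, which is not stated explicitly in \cite{duo_rubio}. For odd $k$ the kernel of $R_P$ is odd, so the method of rotations reduces matters to directional Hilbert transforms; the addition formula for spherical harmonics, combined with the chosen normalization, turns the integration over $S^{d-1}$ into a bounded operator on $L^2(S^{d-1})$ uniformly in $d$, and one obtains $D(p,k)=O(p)$ as $p\to\infty$ and $D(p,k)=O((p-1)^{-1-k/2})$ as $p\to 1$. For even $k$ the direct method of rotations fails, and the standard remedy is to factor the operator through two odd pieces (or apply an auxiliary subharmonic/Littlewood--Paley lifting), so that each factor contributes its own $p$-dependent constant and one is led to $D(p,k)=O(p^2)$ and $O((p-1)^{-2-k/2})$. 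Adding these exponents to those of $A(p,k)$ from \thref{thm1} recovers the claimed rates of $G(p,k)$ in the two parity cases.

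The main obstacle is therefore not conceptual, since the composition is formal, but rather the quantitative bookkeeping needed to extract the explicit $p$-dependence of $D(p,k)$, particularly in the even-order case where the standard proof loses a power of $p$ (respectively $(p-1)^{-1}$) relative to the odd case. Carrying this out carefully, while keeping everything uniform in $d$ under the stated normalization of $\{Y_j\}$, is the only real technical point.
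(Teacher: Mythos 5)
Your proposal reproduces the paper's own argument: the corollary is obtained by composing \thref{thm1} with the Duoandikoetxea--Rubio de Francia dimension-free vector-valued Riesz transform bound \cite[Th\'eor\`eme 2]{duo_rubio}, yielding $G(p,k)=A(p,k)\,D(p,k)$, which is precisely the one-line derivation the paper gives in stating this result. Your tracking of the $p$-dependence of $D(p,k)$ in the odd and even cases matches exactly what is needed to recover the stated rates from those of $A(p,k)$ in \thref{thm1}, so this is essentially the paper's proof.
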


\subsection{Structure of the paper and our methods}
There are four main ingredients used in the proofs of  \thref{thm1,thm2}.

Firstly, we need a factorization of the truncated Riesz transform $R_P^t=M^t_k(R_P)$. Here, $M^t_k,$ $t>0,$ is a family of radial Fourier multiplier operators. In the case $k=1$ this factorization has been one of the key steps in establishing the main results of \cite{kw}. In particular the operator $M_1^t$ considered here coincides with $M^t$ defined in \cite[(eq.) 3.5]{kw}. For general values of $k$ the factorization is  implicit in \cite[Section 2]{mateu_verdera} ($k=1$), \cite[Section 2]{mov1} ($k$ even), and \cite[Section 4]{mopv} ($k$ odd). Note that for the first order Riesz transforms the formulas $R_j^t=M^t_1(R_j),$ $j=1,\ldots,d,$ together with the identity $I=-\sum_{j=1}^d R_j^2$ imply that 
\begin{equation}
	\label{eq:MtR1} 
	M^t_1=-\sum_{j=1}^d M_1^t R_j^2=-\sum_{j=1}^d R_j^t R_j.\end{equation}
Details of the factorization procedure are given in Section \ref{sec:fa}.

The second ingredient we need is an averaging procedure. It turns out that a useful analogue of \eqref{eq:MtR1}  is not directly available for Riesz transforms of orders higher than one. The reason behind it is the fact that not all compositions of first-order Riesz transforms are higher order Riesz transforms according to our definition. For instance, in the case $k=3$  the multiplier symbol of $R_1^3=R_1 R_1 R_1$ on $L^2(\R^2)$ equals $-i\xi_1^3/|\xi|^3$  and $P(\xi)=-i\xi_1^3$ is not a spherical harmonic. However, the formula $$I=-\sum_{j_1=1}^d \sum_{j_2=1}^d \sum_{j_3=1}^d R_{j_1}^2 R_{j_2}^2 R_{j_3}^2,$$
includes squares of all compositions of Riesz transforms including $R_1^6=(R_1^3)^2$. Therefore the above formula does not directly lead to an expression of $M^t_k$ in terms of $R^t_P$ and $R_P.$ To overcome this problem we average over the special orthogonal group $SO(d).$  Then we obtain
\begin{equation} 
	\label{eq:lemAp}
	M^t_kf(x) = C(d,k)\int_{SO(d)}   \sum_{j \in I} (R_j^t R_j f)_U(x)\, d\mu(U),
\end{equation}
see \thref{pro:av}. Here $T_U$ is the conjugation of an operator $T$ by $U\in SO(d),$ see \eqref{conj_U_def}, $d\mu$ denotes the normalized Haar measure on $SO(d),$  while $C(d,k)$ is a constant. The symbol $I$ denotes the set of  multi-indices $j=(j_1,\ldots,j_k)$ with pairwise distinct elements  while $R_j^t$ and $R_j$ are the truncated Riesz transforms and the Riesz transforms \eqref{eq:R} corresponding to the monomials $P_j(x)=x_{j_1}\cdots x_{j_d}.$ Note that since $j\in I$ the polynomials $P_j$ are spherical harmonics and thus the operators $R_j$ are indeed higher order Riesz transforms. In view of \eqref{eq:lemAp},  if we demonstrate that $C(d,k)$ is bounded by a universal constant, we are left with estimating the maximal function corresponding to
$\sum_{j \in I} R_j^t R_j.$ The reduction via the averaging procedure is described in detail in Section \ref{sec:av}. It is noteworthy that in order for the averaging approach to work it is essential that for each order $k$ the multiplier symbols of $M^t_k$ are radial functions.

The third main ingredient of our argument is an extension to $\CC^d$ followed by the complex method of rotations of Iwaniec and Martin \cite{iwaniec_martin}. We use the complex method of rotations to estimate the maximal function $\tR^*$ corresponding to 
\begin{equation}
	\label{eq:tRtintro}
	\tR^t:=\sum_{j\in I} \tR_j^t \tR_j.
\end{equation} Here $\tR_j^t$ and $\tR_j$ denote extensions to $\CC^d$ of the truncated Riesz transform $R_j^t$ and the Riesz transform $R_j.$ The definition of $\tR_j$ can be given on the multiplier level according to the scheme from \cite{iwaniec_martin}. We note, however, that the truncated extended operator $\tR_j^t$ needs to be defined differently --- on a kernel level. In the context of dimension-free estimates for Riesz transforms the real method of rotations has been employed by Duoandikoetxea and Rubio de Francia \cite{duo_rubio}. However, as it can be applied only to operators with odd kernels, for the general case we need the complex version. The method of rotations itself is preceded by a number of other ingredients. In particular we need $L^p$ vector-valued estimates for the maximal directional truncated $k$-th power of the complex Hilbert transform, see \thref{pro:hzt}, and for the vector of higher order Riesz transforms, see \thref{pro:tRve}. En route to obtain these results we also need Khintchine's inequalities and specific computations. All of it reflects the size of the constants $A(p,k)$ in \thref{thm1} and $B(p,k)$ in \thref{thm2}. The extension procedure and the application of the complex method of rotations are described in detail in Section \ref{sec:cmr}.

The last ingredient is a restriction procedure. This allows us to deduce the estimates for $R^*$ on $\R^d$ from the estimates for $\tR^*$  on $\CC^d$. The restriction of the complex Riesz transforms $\tR_j$ in \eqref{eq:tRtintro} can be done on the multiplier level as in \cite[Chapter 4]{iwaniec_martin}. However, in order to restrict $\tR_j^t$ and the maximal function $\tR^*$ we need to work on the kernel level. A problem that we encounter here is that the resulting restricted operator of $\tR^*$ is not the same as the desired maximal operator $R^*$. Therefore we need to investigate their difference and estimate it appropriately. The restriction procedure is described in Section \ref{sec:re}.

At the first reading it might be helpful to skip the explicit values of constants in terms of $k$ and $p$ and only focus on these constants being independent of the dimension $d.$ An interested reader may trace the exact dependencies of the constants in terms of $k$ and $p$ in the paper.

\subsection{Notation}
\label{sec:not}
We finish the introduction with a description of the notation and conventions used in the rest of the paper. 

\begin{enumerate}

	\item The letters $d$ and $k$ stand for the dimension and for the order of the Riesz transforms, respectively.  In particular we always have $k\leqslant d,$ even if this is not stated explicitly.  
	
	\item The symbol $\N$ represents the set of positive integers. Throughout the paper we assume that $k\in \N$. We write $\mathbb{Q}_+$ for the set of positive rational numbers. 
	
	\item By $[d]$ we denote the set $\{1,\ldots,d\}$ of positive integers up to $d.$

	\item For an exponent $p\in [1,\infty]$ we let $q$ be its conjugate exponent satisfying
	$$1=\frac1p+\frac1q.$$
	When $p\in(1,\infty)$ we set
	$$p^*:=\max(p,(p-1)^{-1}).$$

	\item We abbreviate $L^p(\R^d)$ to $L^p$ and $\norm{\cdot}_{L^p}$ to $\norm{\cdot}_p$. For a sublinear operator  $T$ on $L^p$ we denote by $\|T\|_{p\to p}$ its norm. We let $\mathcal S(\R^d)=\mathcal S$ be the space of Schwartz functions on $\R^d.$ Slightly abusing the notation we say that a sublinear operator $T$  is bounded on $L^p$ if it is bounded on $\mathcal S$ in the $L^p$ norm.  For $k\in \N$ we let $\mD(k)$ be the linear span of  $\{R_P(f)\colon P\in \mH_k, f\in \mS\}.$ Since $R_P$ is bounded on $L^p$ for $1<p<\infty$   the space $\mD(k)$ is then a subspace of each of the $L^p$ spaces.
	
	\item For a Banach space $E$ the symbol $L^p(\R^d;E)$ stands for the space of weakly measurable functions  $f\colon \R^d\to E$ with the norm $\|f\|_{L^p(\R^d;E)}=(\int_{\R^d}\|f(x)\|_E^p\, dx)^{1/p}.$ Similarly, for a finite set $F$ by $\ell^p(F;E)$ we denote the Banach space of $E$-valued sequences $\{f_s\}_{s\in F}$  with the norm $\|f\|_{\ell^p(F;E)}=(\sum_{s\in F}\|f_s\|_E^p)^{1/p}.$
	
	\item The symbol $C_{\Delta}$ stands for a constant that possibly depends on $\Delta>0.$ We write $C$ without a subscript when the constant is universal in the sense that it may depend only on $k$ but not on the dimension $d$ nor on any other quantity.
	
	\item For two quantities $X$ and $Y$ we write $X\lesssim_{\Delta} Y$  if $X \leqslant C_{\Delta} Y$ for some constant $C_{\Delta}>0$ that depends only on $\Delta.$ We abbreviate $X\lesssim Y$ when $C$ is a universal constant. We also write $X\sim Y$ if both $X\lesssim Y$ and $Y \lesssim X$ hold simultaneously. By $X\lesssim^{\Delta} Y$ we mean that $X\leqslant C^{\Delta} Y$ with  a universal constant $C.$ Note that in this case $X^{1/\Delta}\lesssim Y^{1/\Delta}.$  
	
	\item The symbol $S^{d-1}$ stands for the $(d-1)$-dimensional unit sphere in $\R^d$ and by $\omega$ we denote the uniform measure on $S^{d-1}$ normalized by the condition $\omega(S^{d-1})=1.$ We also write 
	\begin{equation}
		\label{eq:Sd-1}
		S_{d-1} = \frac{2\pi^{d/2}}{\Gamma\left( \frac{d}{2} \right)}
	\end{equation} to denote the unnormalized surface area of $S^{d-1}.$   
We write $\zeta$ for the uniform measure on $S^{2d-1}$ normalized by the condition $\zeta(S^{2d-1})=1.$
\item We let 
\begin{equation}
	\label{eq:gk}
	\gamma_k =\gamma_{k,d}:= \frac{\Gamma\left( \frac{k+d}{2}\right)}{\pi^{d/2}\Gamma\left( \frac{k}{2}\right)}\qquad{{\rm and}}
\qquad \tg_k=\gamma_{k,2d}= \frac{\Gamma\left( d+\frac{k}{2}\right)}{\pi^{d}\Gamma\left( \frac{k}{2}\right)}
\end{equation}

	\item The Fourier transform is defined for $f\in L^1$ and $\xi\in\R^d$  by the formula
\begin{equation*}
	\mF{f}(\xi) = \hat{f}(\xi)=\int_{\R^d} f(x) e^{-2 \pi i x \cdot \xi} \, dx.
	\end{equation*}

	\item The Gamma function is defined for $s>0$ by the formula
	\[
	\Gamma(s) = \int_0^\infty t^{s-1}e^{-t} \, dt.
	\]
	We shall use Stirling's approximation for $\Gamma(s)$ 
	\begin{equation}
		\label{StirF}
		\Gamma(s)\sim\sqrt{2\pi}s^{s-\frac12}e^{-s},\qquad s\to \infty.
	\end{equation}
A useful consequence of \eqref{StirF} is the formula
\begin{equation}
	\label{StirFra}
	\Gamma(s+\alpha)\sim s^{\alpha}\Gamma(s),\qquad s\to \infty
\end{equation}
which is valid for each fixed $\alpha\ge 0.$

\item We will also need the following formula
\begin{equation}
	\label{eq:Bform}
		2\int_0^{\infty} \frac{r^{d-1}}{(1+r^2)^{d+\alpha}}\,dr=B\left(\frac{d}{2},\frac{d}{2}+\alpha\right)=\frac{\Gamma(\frac{d}{2})\Gamma(\frac{d}{2}+\alpha)}{\Gamma(d+\alpha)},
\end{equation}
valid for $\alpha\ge 0.$ This follows from change of variables $r^2\to r$ followed by  formulas for Euler's Beta function $B(a,b)$ from \cite[5.12.1, 5.12.3]{nist}.
\end{enumerate}

\section{Factorization}
\label{sec:fa}
The goal of this section is to show that a factorization formula for $R_P^t$ in terms of $R_P$ is feasible. Proposition below is implicit in \cite[Section 4]{mopv} and  \cite[pp.\ 1435--1436]{mov1}.
\begin{pro}
	\thlabel{pro:fact}
	Let $k\in \N$. Then there exists a family of  operators $M_k^t,$ $t>0$, 
	which are bounded on $L^p,$ $1<p<\infty,$ and such that for all $P\in \mH_k$ we have 
	\begin{equation}
		\label{eq:fact}
		R_P^t f=M_k^t (R_P f),
	\end{equation}
	where $f\in L^p.$ Each $M^t_k$ is a convolution operator with a radial convolution kernel $b^t_k.$ Moreover, when $P\in \mH_k$ and $f\in \mS,$ then for a.e.\ $x\in \R^d,$ the function $t\mapsto M_k^t (R_P f)(x)$ is continuous on $(0,\infty).$   
\end{pro}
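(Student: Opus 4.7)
The plan is to realize $M_k^t$ as a Fourier multiplier with a radial symbol and then verify the required properties. The crux is to compute the multiplier of the truncated Riesz transform $R_P^t$ and see that it factors as the multiplier of $R_P$ (which by \eqref{eq:m} is $(-i)^k P(\xi/|\xi|)$) times a scalar factor depending only on $|\xi|$, $k$, $d$, and $t$.

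To carry this out, for $f\in\mS$ I would pass to polar coordinates $y=r\theta$ in \eqref{eq:R}; since $P(r\theta)/|r\theta|^{d+k}=P(\theta)/r^d$, Fubini gives
\[
\widehat{R_P^t f}(\xi)=\gamma_k\,\hat{f}(\xi)\int_t^\infty\frac{dr}{r}\int_{S^{d-1}}P(\theta)\,e^{-2\pi ir\theta\cdot\xi}\,d\sigma(\theta).
\]
The inner integral is handled by Bochner's relation for spherical harmonics: it equals $(-i)^k P(\xi/|\xi|)\,h_k(r|\xi|)$ for some function $h_k=h_{k,d}$ depending only on $k$ and $d$ (expressible through the Bessel function $\mathcal{J}_{d/2+k-1}$). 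A change of variable $s=r|\xi|$ then yields
\[
\widehat{R_P^t f}(\xi)=(-i)^k P(\xi/|\xi|)\,\hat{f}(\xi)\,\phi_k^t(|\xi|),\qquad \phi_k^t(\rho):=\gamma_k\int_{t\rho}^\infty\frac{h_k(s)}{s}\,ds,
\]
which is the multiplier of $R_P$ times a \emph{radial} factor. I would then define $M_k^t$ to be the Fourier multiplier operator with the radial symbol $\phi_k^t(|\xi|)$; the factorization \eqref{eq:fact} and the radiality of the convolution kernel $b_k^t$ follow at once by comparison of Fourier transforms.

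For the $L^p$ boundedness of $M_k^t$ when $1<p<\infty$, I would verify the Mikhlin--H\"ormander conditions for $\phi_k^t$. Using the standard asymptotics $\mathcal{J}_\nu(x)\sim x^\nu$ as $x\to 0$ and $|\mathcal{J}_\nu(x)|\lesssim x^{-1/2}$ as $x\to\infty$, the identity $\rho\,\partial_\rho\phi_k^t(\rho)=-\gamma_k h_k(t\rho)$ together with its iterates shows that $|\rho^j\partial_\rho^j\phi_k^t(\rho)|$ is bounded for each $j\ge 0$. Since only boundedness (with possibly dimension-dependent constants) is needed at this stage, this suffices. An alternative, closer in spirit to the arguments in \cite{mov1,mopv}, is to identify $b_k^t$ explicitly and observe that it is a finite linear combination of integrable radial densities, which again gives $L^p$ boundedness directly.

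Finally, for $f\in\mS$ the continuity of $t\mapsto M_k^t(R_P f)(x)=R_P^t f(x)$ on $(0,\infty)$ at every $x\in\R^d$ is a direct consequence of dominated convergence applied to \eqref{eq:R}: near a fixed $t_0>0$ the integrand is dominated by $|P(y)|\,|f(x-y)|/|y|^{d+k}$ on the set $\{|y|>t_0/2\}$, which is integrable since $f\in\mS$. The main obstacle I anticipate is the Bochner computation and correct normalization of constants, because the radial factor $\phi_k^t$ has to come out independent of the particular $P\in\mH_k$; isolating all the dependence on $P$ inside the angular factor $P(\xi/|\xi|)$ is precisely what makes the factorization possible.
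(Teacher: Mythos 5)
Your proposal is correct in outline but follows a genuinely different route from the paper. The paper invokes the explicit kernel constructions from \cite{mopv} (odd $k$) and \cite{mov1} (even $k$): there is a radial function $b$ (a cut--off polynomial for even $k$, and a more involved expression for odd $k$) satisfying the crucial identity $R_P(b)(x)=K_P(x)\ind{B^c}(x)$ for every $P\in\mH_k$, which after dilation immediately gives $R_P^t f=b^t*R_Pf$; boundedness of $M_k^t$ on $L^p$ then comes for free from those references. You instead compute the multiplier of $R_P^t$ on the Fourier side via polar coordinates and the Hecke--Bochner formula, obtaining a radial factor $\phi_k^t(|\xi|)$ times the symbol $(-i)^kP(\xi/|\xi|)$ of $R_P$, and then verify $L^p$ boundedness through a Mikhlin--H\"ormander argument. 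Your route is unified (no odd/even case split), more self-contained, and conceptually clean because it makes the radiality of $M_k^t$ manifest at the multiplier level rather than chasing it through the kernel side; the price is that you must carry out the Bessel-function asymptotics and the iterated derivative bounds in the Mikhlin--H\"ormander verification, which you only sketch. One small point to fix: the Fubini interchange in your first display is not valid on its face, since $\int_{|y|>t}|P(y)|/|y|^{d+k}\,dy=\infty$; the correct justification is that $K_P^t=\gamma_k P(\cdot)/|\cdot|^{d+k}\ind{|\cdot|>t}\in L^2(\R^d)$ (because $|K_P^t(y)|\lesssim|y|^{-d}\ind{|y|>t}$), so $\widehat{K_P^t}$ is an $L^2$ function which can then be evaluated at a.e.\ $\xi\ne0$ as the improper integral $\lim_{R\to\infty}\int_{t<|y|<R}$, and it is to that truncated integral that one applies polar coordinates and Bochner. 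With that repair, and with the Mikhlin--H\"ormander details written out, your argument closes.
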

\begin{proof}
We consider separately the cases of $k$ odd or even starting with $k$ odd.   

Let $c_d=\frac{\Gamma((d-1)/2)}{2\pi^{d/2}\Gamma(1/2)},$ $N=(k-1)/2,$ and denote by $B$ the open Euclidean ball of radius $1$ in $\R^d.$ 
It is justified in \cite[pp.\ 3674--3675]{mopv} that the function
\begin{equation}
	\label{eq:bdef}
	b(x)=b_{k,d}(x):=\sum_{j=1}^d R_j\left[y_j\cdot h(y)\right](x),
\end{equation}
where
$$h(y)=c_d(1-d)\frac{1}{|y|^{d+1}}\ind{B^c}(y)+(\beta_1 +\beta_2|y|^2+\cdots+\beta_N  |y|^{2N-2})\ind{B}(y),$$
satisfies the formula 
\begin{equation}
	\label{eq:RKB}
	R_P(b)(x)=K_P(x)\ind{B^c}.
\end{equation}
Here $\beta_1,\ldots,\beta_N$ are constants which depend only on $k$ and $d$ and whose exact value is irrelevant for our considerations, and $K_P,$ $R_P$ have been defined in \eqref{eq:KP}, \eqref{eq:R}, respectively. The important point is that \eqref{eq:RKB} remains true for any $P\in \mH_k.$ 

Denote by $H$ the radial profile of the Fourier transform of $h$, i.e.\ $H(|\xi|)=\widehat{h}(\xi)$ for $\xi\in \R^d.$ By taking the Fourier transform of \eqref{eq:bdef} it is straightforward to see that $b$ is a radial function. This follows since the multiplier symbol of $R_j$ is $-i\xi_j/|\xi|$ and  $$\widehat {(y_j h(y))}(\xi)=\frac{\xi_j}{-2\pi i|\xi|}\,H'(|\xi|),$$ so that
\begin{align*}
	\mF b(\xi)=\sum_{j=1}^d \frac{\xi_j^2}{2\pi |\xi|^2}\cdot H'(|\xi|)=\frac{1}{2\pi}H'(|\xi|)
\end{align*}
is indeed radial and so is $b.$

Let $b^t(x)=b^t_k(x):=t^{-d}b(x/t)$ be the $L^1$ dilation of $b;$ clearly $b^t$ is still radial.  The dilation invariance of $R_P$ together with \eqref{eq:RKB} leads us to the expression
\begin{equation}
	\label{eq:RKBt}
	K_P(x)\ind{B^c}(x/t)=R_P(b^t)(x).
\end{equation}
Let $M^t_k$ be the convolution operator  
\begin{equation*}
	M^t_k f(x)=b^t*f(x).
\end{equation*}
It follows from \cite[Section 4]{mopv} that $M_k^t$ is bounded on $L^p$ spaces whenever $1<p<\infty.$ 
Moreover, in view of \eqref{eq:RKBt} we see that
\begin{equation*}
	R_P^t f= R_P(b^t)*f=b^t * R_P(f)=M_k^t (R_P f).
\end{equation*}

Finally, for $f\in \mS,$ $P\in \mH_k,$  and $x\in \R^d$ the mapping $t\mapsto R_P^t f(x)$ is  continuous on $(0,\infty).$ Thus, also $M_k^t (R_P f)(x)$ is a continuous function of $t>0$ for a.e.\ $x.$ This completes the proof of the proposition in the case when $k$ is odd.

It remains to consider $k$ even. 
Denote $N=k/2.$ 
	From (10) and (12) in \cite[pp.\ 1435--1436]{mov1} it follows that the function
	\begin{equation*}
		b(x)=b_{k,d}(x):=(\alpha_0+\alpha_1|x|^2+\cdots+{\alpha_{N-1}}|x|^{2(N-1)})\ind{B}(x)
	\end{equation*}
	satisfies the formula 
	\begin{equation}
		\label{eq:RKBev}
		R_P(b)(x)=K_P(x)\ind{B^c}(x).
	\end{equation}
	Here $\alpha_1,\ldots,\alpha_{N-1}$ are constants which depend only on $k$ and $d$ and whose exact value is irrelevant for our considerations. As in the case of odd $k,$ the important point is that \eqref{eq:RKBev} remains true for any $P\in \mH_k.$ 
	
Using \eqref{eq:RKBev} we proceed as in the proof in the case when $k$. Let $b^t(x)=b^t_k(x):=t^{-d}b(x/t)$ be the $L^1$ dilation of $b.$ Since $b$ is clearly radial the same is true of  $b^t$.  Let $M^t_k$ be the convolution operator  
	\begin{equation*}
		M^t_k f(x)=b^t*f(x).
	\end{equation*}
	It follows from \cite[Section 2]{mov1} that $M_k^t$ is bounded on $L^p$ spaces whenever $1<p<\infty.$ 
	Moreover, in view of \eqref{eq:RKBev} we see that
	\begin{equation*}
		R_P^t f= R_P(b^t)*f=b^t * R_P(f)=M_k^t (R_P f).
	\end{equation*}
Moreover, for $f\in \mS,$ $P\in \mH_k,$  and $x\in \R^d$ the mapping $t\mapsto R_P^t f(x)$ is  continuous on $(0,\infty)$ and therefore so is $t\mapsto M_k^t (R_P f)(x)$. This completes the proof of the proposition.

%
%
\end{proof}

As a corollary of  \thref{pro:fact} we see that in order to justify \thref{thm1,thm2} it suffices to control vector and scalar-valued maximal functions corresponding to the operators $M^t_k.$ Note that by \thref{pro:fact} 
for $f\in \mD(k)$ we have
\begin{equation*}
	\sup_{t>0}|M^t_k f(x)|=\sup_{t\in \Q_+ }|M^t_k f(x)|.
\end{equation*}
In particular $\sup_{t>0}|M_k^t f(x)|$ 
 is measurable for such $f$, although possibly being infinite for some $x.$ 
 Define
 \begin{equation}
 	\label{eq:MtmaxQ}
 	M^*f(x)=\sup_{t\in \Q_+}|M^t_k f(x)|.
 \end{equation}
 \thref{pro:fact} reduces our task to proving the following two theorems.
\begin{theorem} \thlabel{thm1'}
	Fix $k\in \N.$ For each $p\in (1,\infty)$  there is a constant $A(p,k)$ independent of the dimension $d$ and such that for any $S\in\N$  we have
	\begin{equation*} 
		\norm{\left(\sum_{s=1}^S |M^* f_s|^2\right)^{1/2}}_{p} \leqslant A(p,k) \norm{ \left(\sum_{s=1}^S | f_s|^2\right)^{1/2}}_{p},
	\end{equation*}
	where $f_1,\ldots,f_S \in  L^p.$ Furthermore $A(p,k)$ satisfies $A(p,k) \lesssim_k (p^*)^{5/2+k/2}.$ 
\end{theorem}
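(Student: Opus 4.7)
Having reduced Theorem \ref{thm1} to the scalar-multiplier statement above via \thref{pro:fact}, the plan is to implement the four-ingredient strategy outlined in the introduction: averaging over $SO(d)$, extension to $\CC^d$, the complex method of rotations, and restriction back to $\R^d$.

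\textbf{Step 1 (averaging).} I first invoke the averaging identity \eqref{eq:lemAp}, which represents $M^t_k f$ as an $SO(d)$-integral of $C(d,k)\sum_{j\in I}(R_j^t R_j f)_U$ with $|C(d,k)|$ bounded by a universal constant independent of $d$ (this will be delicate and relies on the radiality of the symbol of $M^t_k$). Since the pointwise supremum in $t$ is controlled, uniformly in $U$, by the maximal operator
\[
T^*f(x):=\sup_{t>0}\Big|\sum_{j\in I} R_j^t R_j f(x)\Big|,
\]
Minkowski's inequality, together with the rotation invariance of Lebesgue measure, reduces the theorem to the vector-valued bound
\[
\Big\|\Big(\sum_{s=1}^S |T^* f_s|^2\Big)^{1/2}\Big\|_p\lesssim_k (p^*)^{5/2+k/2}\,\Big\|\Big(\sum_{s=1}^S |f_s|^2\Big)^{1/2}\Big\|_p.
\]

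\textbf{Step 2 (extension, complex rotation, restriction).} To bound $T^*$ I extend each $R_j$ and $R_j^t$ to operators $\tR_j$ and $\tR_j^t$ on $\CC^d\cong\R^{2d}$; the former is defined at the Fourier-multiplier level, while the truncated version must be defined on the kernel level. Setting
\[
\widetilde{T}^t:=\sum_{j\in I}\tR_j^t\,\tR_j\qquad\textrm{and}\qquad\widetilde{T}^*f:=\sup_{t>0}|\widetilde{T}^t f|,
\]
the Iwaniec--Martin complex method of rotations represents $\widetilde{T}^t$ as an average over complex unit directions $\theta\in S^{2d-1}$ of an appropriately truncated $k$-th power of a one-dimensional complex Hilbert transform composed with a single higher order Riesz transform. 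The vector-valued $L^p$ bound for $\widetilde{T}^*$ then follows by combining (a) the maximal bound for the directional truncated $k$-th Hilbert power (\thref{pro:hzt}), (b) the vector bound for higher order complex Riesz transforms (\thref{pro:tRve}), and (c) Khintchine's inequality to glue the two together. Finally, I restrict from $\CC^d$ back to $\R^d$: the restriction of $\tR_j$ reproduces $R_j$ on the multiplier level as in \cite{iwaniec_martin}, while the restriction of $\tR_j^t$ and of $\widetilde{T}^*$ is carried out on the kernel level and yields an operator that differs from $T^*$ by a remainder which I estimate directly by a pointwise kernel comparison.

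\textbf{Main obstacle.} The principal difficulty is to propagate the correct $p^*$ dependence coherently through the complex rotation and the restriction. Each ingredient contributes a power of $p^*$: the one-dimensional complex Hilbert transform yields a factor $p^*$; the Khintchine step absorbs roughly $(p^*)^{1/2}$ per Hilbert factor, which is the source of the $k/2$ in the final exponent; and a Stein-type square function argument for the radial multiplier $\widetilde{T}^*$ supplies the remaining $(p^*)^{3/2}$. Beyond this bookkeeping, the most delicate technical point is the restriction: the naive restriction of $\widetilde{T}^*$ is \emph{not} $T^*$, and controlling the discrepancy without losing either dimension-freedom or the $p^*$ exponent, together with verifying that $|C(d,k)|$ in Step 1 is bounded uniformly in $d$, is where the bulk of the work lies.
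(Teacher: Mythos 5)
Your plan reproduces the paper's overall chain: \thref{pro:fact} reduces \thref{thm1} to the multiplier statement, \thref{pro:av} averages over $SO(d)$ to replace $M_k^t$ by $R^t=\sum_{j\in I}R_j^tR_j$, and then the argument extends to $\CC^d$, applies the Iwaniec--Martin complex method of rotations, and restricts back, with the restricted operator $\mR^t$ differing from $R^t$ by a remainder $D^t$ that must be bounded separately. All of the pieces you name (the uniformity of $|C(d,k)|$, the kernel-level definition of $\tR_j^t$, \thref{pro:hzt}, \thref{pro:tRve}, Khintchine) are the ones the paper actually uses.

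Two points in your bookkeeping are inaccurate, however. First, the $k/2$ in the final exponent does not come from Khintchine ``absorbing $(p^*)^{1/2}$ per Hilbert factor.'' It comes from the Duoandikoetxea--Rubio de Francia moment inequality for spherical harmonics (cited as ``Lemme, p.~195'' of \cite{duo_rubio}), which is applied once to the degree-$k$ spherical harmonic $\zeta\mapsto\sum_{j}\lambda_j\zeta_j$ on $S^{2d-1}$ and produces the $q^{k/2}$ (respectively $p^{k/2}$) factors in \eqref{eq:calcul}, \eqref{eq:calcu'}, and inside \thref{pro:tRve}. Khintchine contributes only a single extra $p^{1/2}$, which accounts for the gap between the scalar bound $(p^*)^{2+k/2}$ of \thref{thm2c''} and the square-function bound $(p^*)^{5/2+k/2}$ of \thref{thm1c''}. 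Second, there is no ``Stein-type square function argument for the radial multiplier'' anywhere in the paper; the remaining powers of $p^*$ come from \thref{pro:hzt} (one $p^*$) and \eqref{eq:tRvs} (one $p^*$), both via the method of rotations. Finally, the discrepancy $D^t=R^t-\mR^t$ is not handled by a ``direct pointwise kernel comparison'' with $R^t$; rather it is treated by a second, real-variable method of rotations (the polar-coordinate identity \eqref{eq:Djtrot} together with \thref{pro:mhzt}, which dominates the directional kernel $\mH_\omega^t$ by the directional Hardy--Littlewood averaging operator and then by \thref{pro:Rve}). Your high-level plan is sound, but a careful proof would need to replace these three mis-stated steps by the ones the paper actually carries out.
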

\begin{theorem} \thlabel{thm2'}
	Fix $k\in \N.$ For each $p\in (1,\infty)$  there is a constant $ B(p,k)$ independent of the dimension $d$ and such that
	\begin{equation*} 
		\norm{ M^* f}_{p} \leqslant B(p,k) \norm{  f}_{p},
	\end{equation*}
	whenever $f\in L^p.$ Moreover $B(p,k)$ satisfies $B(p,k) \lesssim_k (p^*)^{2+k/2}.$
\end{theorem}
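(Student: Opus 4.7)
The proof follows the four-ingredient strategy sketched in the introduction: factorization, averaging over $SO(d)$, complex method of rotations, and restriction. The factorization step has already been carried out in \thref{pro:fact}, so the remaining task is to implement the other three ingredients on the radial convolution operators $M^t_k$.

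First, I would establish the averaging identity \eqref{eq:lemAp},
\begin{equation*}
	M^t_k f(x) = C(d,k)\int_{SO(d)} \sum_{j \in I}(R_j^t R_j f)_U(x)\, d\mu(U),
\end{equation*}
where $I$ consists of $k$-tuples $j=(j_1,\ldots,j_k)\in[d]^k$ with pairwise distinct entries and $R_j$ is the Riesz transform of the monomial $P_j(x)=x_{j_1}\cdots x_{j_k}$. The decisive point is to verify that $C(d,k)$ is bounded by a constant depending only on $k$; this is a Schur-orthogonality computation on $SO(d)$ in which the polynomial size $|I|\sim d^k$ of the index set is cancelled exactly by the size of the relevant rotational averages, and is facilitated by the fact that the multiplier symbol of $M^t_k$ is radial. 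Taking the pointwise supremum over $t\in\Q_+$, bringing it inside the $d\mu$-integral (legitimate by \thref{pro:fact}, which ensures measurability in $t$), invoking Minkowski and the rotational invariance $\|g_U\|_p = \|g\|_p$, reduces the task to proving
\begin{equation*}
	\biggl\|\sup_{t\in\Q_+}\Bigl|\sum_{j\in I} R_j^t R_j f\Bigr|\biggr\|_p \lesssim_k (p^*)^{2+k/2}\|f\|_p.
\end{equation*}

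Next, I would extend the operators to $\CC^d$: the un-truncated $\tR_j$ is built on the Fourier-multiplier side following Iwaniec--Martin, while $\tR_j^t$ must be defined at the kernel level since the Fourier symbol of a truncated operator is not of the right form. Setting $\tR^t=\sum_{j\in I}\tR_j^t\tR_j$ and $\tR^*=\sup_{t\in\Q_+}|\tR^t|$, the complex method of rotations writes $\tR^*$ as a spherical average on $S^{2d-1}$ of maximal operators of the form ``truncated $k$-th directional power of the complex Hilbert transform composed with its un-truncated $k$-th power''. Here, in contrast to \thref{thm1'}, we are dealing with a scalar maximal function rather than an $\ell^2$-vector of them, so Khintchine's inequality is not invoked; this is precisely where we save a factor of $(p^*)^{1/2}$ relative to \thref{thm1'}. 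The $(p^*)^{k/2}$ factor tracks the $k$ successive directional Hilbert transforms as in \thref{pro:hzt}, and the remaining $(p^*)^2$ comes from the directional maximal bound combined with the single-direction higher-order Riesz transform estimate \thref{pro:tRve}.

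Finally, I would restrict the complex estimate back to $\R^d$. The restriction of $\tR_j$ agrees with $R_j$ at the multiplier level, but the kernel-level truncation in $\CC^d$ differs from the truncation in $\R^d$, so the restriction of $\tR^*$ is not $\sup_t |\sum_{j\in I}R_j^t R_j f|$; it is this discrepancy which is, in my view, the chief obstacle. The plan is to compute the kernel of the difference explicitly, observe that the residual operator has a smooth radial profile whose maximal function is controlled by a dimension-free multiple of the Hardy--Littlewood maximal function on $\R^d$ (or of an analogous well-behaved operator), and show that this contributes at most $O_k((p^*)^{2+k/2})$ on $L^p$. Adding the complex estimate to the bound on the error then produces the target inequality with the announced dependence $B(p,k)\lesssim_k (p^*)^{2+k/2}$.
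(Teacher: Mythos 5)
Your proposal tracks the paper's own proof step for step (factorization into $M^t_k$, $SO(d)$-averaging, extension to $\CC^d$ with the complex method of rotations, and kernel-level restriction followed by a bound on the residual difference operator via a real method of rotations), so there is nothing to add structurally. The only minor misattribution is in your bookkeeping of constants: the $(p^*)^{k/2}$ factor comes from the Duoandikoetxea--Rubio de Francia $L^q$-versus-$L^2$ estimate for degree-$k$ spherical harmonics on $S^{2d-1}$ rather than from the directional complex Hilbert transform (which costs only a single $p^*$), and the residual in the restriction step is dominated by the \emph{directional} one-dimensional Hardy--Littlewood maximal function, not the Euclidean one on $\R^d$, which is how the dimension-free $O(p^*)$ rate is preserved.
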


\section{Averaging}
\label{sec:av}
In this section we describe the averaging procedure. The averaging procedure will allow us to pass from $M^*$ to another maximal operator that is better suited for applications in Sections \ref{sec:cmr} and \ref{sec:re}. Before moving on, we establish some notation. For a multi-index $$j = (j_1, \dots, j_k) \in \{1, \dots, d\}^k\quad \text{we write} \quad P_j(x) =x_j:= x_{j_1} \cdots x_{j_k}$$ and denote by $R_j$ the Riesz transform $R_{P_j}$ associated with the monomial $P_j.$ The truncated transform $R_j^t$ and the maximal transform $R_j^*$ are defined analogously. We also abbreviate $K_{j}(x)=K_{P_j}(x)$ and $K_j^t(x)=K_{P_j}^t(x).$   
As we will be mainly interested in multi-indices with different components, we define
\[
	I = \{j \in \{1, \dots, d\}^k: j_k \neq j_l \text{ for } k \neq l \}.
\] 

The averaging procedure will provide an expression  for $M^t_k$ in terms of the Riesz transforms $R_j$ and $R_j^t$ postulated in \eqref{eq:lemAp}. For $f\in L^p,$ $1<p<\infty,$ denote
\begin{equation*}
	R^tf:=\sum_{j \in I} R_j^t R_jf\qquad\textrm{and let}\qquad 	R^* f:=\sup_{t\in \Q_+}\left|R^t f\right|.
\end{equation*}
Note that both $R^t$ and $R^*$ are well defined on all $L^p$ spaces. Indeed, $R_j^t$ and $R_j$ are bounded on $L^p$ and the supremum in the definition of $R^*$ runs over a countable set thus defining a measurable function.

Let $SO(d)$ be the special orthogonal group in dimension $d.$ Since it is compact, it has a bi-invariant Haar measure $\mu$ such that $\mu(SO(d))=1.$ For $U\in SO(d)$ and a sublinear operator $T$ on $L^2$ we denote by $T_U$ the conjugation by $U,$ i.e. the operator acting via 	\begin{equation}\label{conj_U_def}
	T_Uf(x)=T(f (U^{-1}\cdot))(Ux).
\end{equation}

\begin{pro} \thlabel{pro:av}
	Fix $k\in \N.$ Then there is a constant $C(d,k)\in \R$ such that 
	\begin{equation} 
		\label{eq:lemAplem}
		M^t_k f(x)=C(d,k)\int_{SO(d)} [(R^t)_U f](x) d\mu(U)
	\end{equation}
	for all $t>0$ and $f\in L^p.$ Moreover, $|C(d,k)|$ has an estimate from above by a constant that depends only on $k$ but not on the dimension $d,$ so that 
	\begin{equation} \label{eq:lemA}
		\left(\sum_{s=1}^S \abs{M^* f_s(x)}^2\right)^{1/2} \lesssim \int_{SO(d)}  \left(\sum_{s=1}^S \abs{[(R^*)_U f_s](x)}^2\right)^{1/2} \, d\mu(U),
	\end{equation}
	for $S\in \N$ and $f_1,\ldots,f_S\in L^p.$ 
\end{pro}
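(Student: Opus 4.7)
The approach is Fourier analytic. By Proposition~\ref{pro:fact}, $R_j^t = M_k^t R_j$, so
\[
R^t = \sum_{j \in I} R_j^t R_j = M_k^t \sum_{j \in I} R_j^2.
\]
Since $R_j = R_{P_j}$ has Fourier multiplier $(-i)^k \xi_{j_1}\cdots\xi_{j_k}/|\xi|^k$, the multiplier of $\sum_{j\in I} R_j^2$ equals
\[
(-1)^k\sum_{j \in I}\frac{\xi_{j_1}^2\cdots \xi_{j_k}^2}{|\xi|^{2k}} = (-1)^k k!\,\frac{e_k(\xi_1^2,\dots,\xi_d^2)}{|\xi|^{2k}},
\]
where $e_k$ is the $k$-th elementary symmetric polynomial and $k!$ counts orderings of a $k$-subset (every index tuple in $I$ has distinct entries). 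Combined with the radial symbol $m_k^t(|\xi|)$ of $M_k^t$ (radial because $b_k^t$ is radial), this describes the full multiplier of $R^t$.

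The second step is averaging. A direct computation shows that if $T$ has multiplier $m(\xi)$ then $T_U$ has multiplier $m(U\xi)$, so $\int_{SO(d)}(R^t)_U\,d\mu(U)$ has symbol obtained by spherically averaging. As $e_k(\xi_1^2,\dots,\xi_d^2)/|\xi|^{2k}$ is homogeneous of degree $0$, its spherical average is a constant
\[
c(d,k):=\int_{S^{d-1}}e_k(\eta_1^2,\dots,\eta_d^2)\,d\omega(\eta),
\]
and we obtain $\int_{SO(d)}(R^t)_U f\,d\mu(U) = (-1)^k k!\, c(d,k)\, M_k^t f$. I would verify this operator identity first on $L^2$ via Plancherel and then extend to $L^p$ using density of $\mathcal{S}$ and the $L^p$-boundedness of every factor. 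This proves \eqref{eq:lemAplem} with $C(d,k)=\bigl[(-1)^k k!\, c(d,k)\bigr]^{-1}$.

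The main obstacle is the dimension-free lower bound on $c(d,k)$. By the symmetry of $\omega$, $c(d,k) = \binom{d}{k}\int_{S^{d-1}}\eta_1^2\cdots\eta_k^2\,d\omega(\eta)$, and the standard Beta-function moment formula on the sphere gives $\int_{S^{d-1}}\eta_1^2\cdots\eta_k^2\,d\omega(\eta) = \Gamma(d/2)/\bigl(2^k\Gamma(d/2+k)\bigr)$, whence
\[
c(d,k)=\frac{1}{k!}\prod_{j=0}^{k-1}\frac{d-j}{d+2j}.
\]
Each factor is positive for $d\ge k$ and non-decreasing in $d$ (indeed $\partial_d[(d-j)/(d+2j)] = 3j/(d+2j)^2 \ge 0$), so $c(d,k)\ge c(k,k) = \frac{1}{k!}\prod_{j=0}^{k-1}\frac{k-j}{k+2j}>0$, a positive constant depending only on $k$. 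Thus $|C(d,k)|\lesssim_k 1$.

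Finally, \eqref{eq:lemA} follows from \eqref{eq:lemAplem}: taking absolute values and using the pointwise bound $|(R^t)_U f(x)| \le (R^*)_U f(x)$, then passing to the supremum over $t\in\Q_+$ in the definition \eqref{eq:MtmaxQ} of $M^*$, yields
\[
M^* f(x) \leq |C(d,k)|\int_{SO(d)} (R^*)_U f(x)\, d\mu(U).
\]
Applying Minkowski's integral inequality for the $\ell^2(\{1,\dots,S\})$-norm in the $f_s$-variable delivers \eqref{eq:lemA}.
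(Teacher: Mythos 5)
Your proof is correct and follows essentially the same route as the paper: factorize $R_j^t = M_k^t R_j$, exploit the radiality of the multiplier of $M_k^t$ so that conjugation by $U\in SO(d)$ only acts on the symbol of $\sum_{j\in I} R_j^2$, observe that the $SO(d)$-average of that degree-$0$ homogeneous symbol is a constant (namely the paper's $\widetilde a$, which equals your $(-1)^k k!\,c(d,k)$), and then bound it uniformly in $d$. The only point of divergence is how that bound is obtained: the paper uses $|I|\sim d^k$ together with a Gaussian-integral evaluation of $\int_{S^{d-1}}\omega_1^2\cdots\omega_k^2\,d\omega$ and Stirling's formula to get $|\widetilde a|\sim 1$, whereas you compute $c(d,k)=\tfrac1{k!}\prod_{j=0}^{k-1}\tfrac{d-j}{d+2j}$ exactly via the Beta-function moment and observe each factor is increasing in $d$, giving the cleaner explicit lower bound $c(d,k)\ge c(k,k)>0$. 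Both bounds are correct and live in the same step, so this is a variation in detail, not a different argument; your version does have the minor advantage of producing a closed form for $C(d,k)$ without asymptotics.
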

\begin{proof}
	Let  $A$ be the operator
	\begin{equation}
		\label{eq:Adef}
		A = \sum_{j \in I} R_j^2 ,
	\end{equation}
	which by \eqref{eq:m} means that its multiplier symbol equals
	\[
	a(\xi) =(-i)^{2k} \sum_{j \in I} \frac{\xi_j^2}{\abs{\xi}^{2k}}= (-1)^k \sum_{j \in I} \frac{\xi_j^2}{\abs{\xi}^{2k}}.
	\]
	Let  $\widetilde{A}$ be the operator with the multiplier symbol
	\begin{equation}
		\label{eq:mtil}
		\widetilde{a}(\xi) := \int_{SO(d)} a(U\xi) \, d\mu(U)= (-1)^k \sum_{j \in I} \int_{SO(d)} \frac{\left( (U\xi \right)_j)^2}{\abs{\xi}^{2k}} \, d\mu(U).
	\end{equation}
	Then
	$\widetilde{a}$ being
	radial and homogeneous of order $0$ is constant. 
	
	The first step in the proof of the proposition is to show that
	\begin{equation}
		\label{eq:lemA1}
		|\widetilde{a}|\sim 1
	\end{equation}
	uniformly in the dimension $d.$ Note that each of the integrals on the right hand side of \eqref{eq:mtil} has the same value independently of $j\in I,$
	so that
	\[
	\widetilde{a}(\xi)= (-1)^k \abs{I} \int_{SO(d)} \frac{(\left(U\xi \right)_{(1,\ldots,k)})^2}{\abs{\xi}^{2k}} \, d\mu(U);
	\]
	here $|I|$ stands for the number of elements in $I.$ Since $\tilde{a}$ is radial, integrating the above expression over the unit sphere $S^{d-1}$ with respect to the normalized surface measure $d\omega$ we obtain
	\begin{equation} \label{eq0}
		\widetilde{a} = (-1)^k \abs{I} \int_{S^{d-1}} \omega_1^2 \cdots \omega_k^2 \ d\omega.
	\end{equation}
	Since $k$ is fixed, by an elementary argument we get $|I|=d!/(d-k)!\sim d^k$. Thus it remains to show that
	\begin{equation}
		\label{eq:lemA1'}
		\int_{S^{d-1}} \omega_1^2 \cdots \omega_k^2 \ d\omega \sim d^{-k}
	\end{equation}
	
	Formula \eqref{eq:lemA1'} is given in \cite[(10)]{sykora}. It can be also easily
	computed by the method from \cite[Chapter 3.4]{Ho}; for the sake of completeness we provide a brief argument.
	Consider the integral $J=\int_{\R^d}x_1^2...x_k^2e^{-|x|^2}dx$. Since $J$ is a product of the one-dimensional integrals we calculate $J=\Gamma \left(\frac{3}{2} \right)^k \Gamma \left(\frac{1}{2}\right)^{d-k},$ while using polar coordinates
	gives $J=S_{d-1}\int_{S^{d-1}} \omega_1^2 \cdots \omega_k^2 \ d\omega\int_0^\infty r^{2k+d-1}e^{-r^2}dr$, where $S_{d-1}$ is defined by \eqref{eq:Sd-1}. 
	Altogether we have justified that 
	\[
	\int_{S^{d-1}} \omega_1^2 \cdots \omega_k^2 \ d\omega\sim  \frac{ \Gamma \left(\frac{1}{2} \right)^{d-k}}{S_{d-1}\Gamma\left( k+\frac{d}{2} \right)}.
	\]
	Since $k$ is fixed and $d$ is arbitrarily large, using \eqref{eq:Sd-1}, Stirling's formula for the $\Gamma$ function \eqref{StirF}
	and the known identity  $\Gamma(1/2)=\sqrt{\pi}$  we obtain 
	\begin{align*}
		\int_{S^{d-1}} \omega_1^2 \cdots \omega_k^2 \ d\omega &\sim \frac{ \sqrt{k+\frac{d}{2}} \left( \frac{d}{2e} \right)^{d/2} }{\sqrt{\frac{d}{2}} \left( \frac{k+\frac{d}{2}}{e} \right)^{k+d/2}} \\
		&\sim \frac{ e^{-d/2}}{e^{-k-d/2}} \left( \frac{k+\frac{d}{2}}{d/2} \right)^{-d/2} \left( k+\frac{d}{2} \right)^{-k} \\
		&\sim   d^{-k}
	\end{align*}
	This gives \eqref{eq:lemA1'} and concludes the proof of \eqref{eq:lemA1}.

	Let now $m^t$ be the multiplier symbol of $M^t_k.$ Then, from \thref{pro:fact}  we see that $m^t=\hat{b^t}$ is radial, so that
	\begin{align*}
		m^t(\xi)&=\tilde{a}^{-1} \tilde{a}\, m^t(\xi)=\tilde{a}^{-1} \int_{SO(d)} m^t(\xi) \, a(U\xi) \, d\mu(U)\\
		&=\tilde{a}^{-1} \int_{SO(d)} m^t(U\xi) \, a(U\xi) \, d\mu(U).
	\end{align*} 
	Using properties of the Fourier transform the above equality implies that
	\begin{align*}
		M^t_k f(x)=\tilde{a}^{-1}\int_{SO(d)}\, [(M^t_k A)_U](f)(x)\,d\mu(U).
	\end{align*}
	Recalling \eqref{eq:Adef} we apply \eqref{eq:fact} from \thref{pro:fact}   and obtain
	\[
	M^t_k A=\sum_{j\in I} M^t_k R_j R_j=\sum_{j\in I}R_j^t R_j=R^t;
	\]
	here an application of \eqref{eq:fact}  is allowed since each $R_j$ corresponds to the monomial $x_j$  which is in $\mH_k$ when $j\in I.$   In summary, we justified that
	\begin{equation}
		\label{eq:Mtexpp}
		M^t_k f(x)=\tilde{a}^{-1}\int_{SO(d)}\, [(R^t)_U](f)(x)\,d\mu(U),\qquad f\in\mD(k),
	\end{equation}
	which is \eqref{eq:lemAplem} with $C(d,k)=\tilde{a}^{-1}.$ 
	
	It remains to justify \eqref{eq:lemA}. This follows from \eqref{eq:MtmaxQ}, \eqref{eq:Mtexpp}, and \eqref{eq:lemA1}, together with the norm inequality
	\[
	\norm{\int_{SO(d)}\, F_{s,t}(U)\,d\mu(U)}_X\leqslant \int_{SO(d)}\,\norm{ F_{s,t}(U)}_X\,d\mu(U);
	\]
	on the Banach space $X=\ell^2(\{1,\ldots,S\};\ell^{\infty}(\Q_+)),$ with $F_{s,t}(U)=(R^t)_U(f_s)(x)$ and $x$ being fixed. 
	
	The proof of \thref{pro:av} is thus completed.
\end{proof}

Since conjugation by $U\in SO(d)$  is an isometry on all $L^p$ spaces, in view of $\mu(SO(d))=1$ and Minkowski's integral inequality  \thref{pro:av} eq.\ \eqref{eq:lemA} allows us to deduce \thref{thm1',thm2'} from the two theorems below.

\begin{theorem}
	\thlabel{thm1''}
	Fix $k\in \N.$ For each $p \in (1, \infty)$ there is a constant $A(p,k)$ independent of the dimension $d$ and such that for any $S\in\N$  we have
	\begin{equation*} 
		\norm{\left(\sum_{s=1}^S |R^* f_s|^2\right)^{1/2}}_{p} \lesssim A(p,k) \norm{ \left(\sum_{s=1}^S | f_s|^2\right)^{1/2}}_{p},
	\end{equation*}
where $f_1,\ldots,f_S \in L^p.$ Moreover, $A(p,k)$ satisfies $A(p,k)\lesssim_k (p^*)^{5/2+k/2}.$  
\end{theorem}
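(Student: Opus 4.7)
The plan is to implement the roadmap outlined in Section 1.1: extend the operators from $\R^d$ to $\CC^d$, apply the complex method of rotations of Iwaniec--Martin on $\CC^d$, reduce to dimension-free vector-valued $L^p$ estimates for a maximal directional truncated $k$-th power of the complex Hilbert transform and for the vector of higher order Riesz transforms, and finally restrict back to $\R^d$.

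The first step is to extend each $R_j$, $j\in I$, to an operator $\tR_j$ on $\CC^d\cong \R^{2d}$ via the homogeneous Fourier multiplier extension of \cite[Chapter 4]{iwaniec_martin}, designed so that $\tR_j$ restricted to $\R^d$ recovers $R_j$. Since $R_j^t$ is not a Fourier multiplier of a homogeneous symbol, the extension $\tR_j^t$ has to be built on the convolution kernel, using the $2d$-dimensional normalizing constant $\tg_k$ from \eqref{eq:gk}. Setting
\[
\tR^t=\sum_{j\in I}\tR_j^t\tR_j,\qquad \tR^*f=\sup_{t\in\Q_+}|\tR^t f|,
\]
the intermediate goal becomes a dimension-free vector-valued square function estimate for $\tR^*$ on $\CC^d$.

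Next, the complex method of rotations expresses $\tR^t$ as an average over $\theta\in S^{2d-1}$, with respect to $\zeta$, of one-variable operators along the complex direction $\theta$, namely a truncated $k$-th power $H^t_{k,\theta}$ of the complex Hilbert transform in direction $\theta$, composed with $\tR_j$ and weighted by the homogeneous polynomial factors $P_j(\theta)$. Pulling the supremum over $t\in \Q_+$ and then the $\ell^2$-norm over $s$ inside the integral via Minkowski's inequality splits the task into two factors: a uniform-in-$\theta$ vector-valued maximal $L^p(\ell^2)$ estimate for $H^t_{k,\theta}$, to be established in \thref{pro:hzt}, and a dimension-free vector-valued $L^p(\ell^2)$ estimate for $\{\tR_j\}_{j\in I}$, to be established in \thref{pro:tRve}. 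The target exponent $5/2+k/2$ decomposes naturally into three contributions: $(p^*)^{k/2}$ from iterating $k$ directional Hilbert transforms through a Littlewood--Paley plus Khintchine argument, roughly $(p^*)^{3/2}$ from the maximal truncation, and a further $p^*$ from the vector-valued Riesz estimate. Dimension-freeness persists because the factor $|I|\sim d^k$ cancels exactly the $d^{-k}$ produced in the proof of \thref{pro:av} via \eqref{eq:lemA1'}.

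The main obstacle is the restriction step back to $\R^d$. The multiplier-level definition guarantees $\tR_j|_{\R^d}=R_j$, but the kernel-level definition of $\tR_j^t$ does not restrict cleanly to $R_j^t$: a genuine defect operator appears whose maximal function must be estimated directly from the explicit kernels, exploiting the radiality of $b^t_k$ from \thref{pro:fact} together with the cancellation expressed by \eqref{eq:RKB} and \eqref{eq:RKBev}. This error must be absorbed into the main bound without enlarging the power of $p^*$ or reintroducing dependence on $d$. Once this is done, the theorem holds on the dense subspace $\mD(k)$, and a standard density argument using the $L^p$-boundedness of $R_j^t$ and $R_j$ extends it to arbitrary $f_s\in L^p$.
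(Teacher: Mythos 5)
Your roadmap matches the paper's architecture through the complex extension, the Iwaniec--Martin complex method of rotations, \thref{pro:hzt}, and \thref{pro:tRve}. But the description of how the restriction defect is controlled is the place where the argument would actually fail as stated, and it does not match what the paper does.

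Concretely: after transference the operator $\mR^t$ obtained by restricting $\tR^t$ to $\R^d$ has kernels $\mK_j^t$ that agree with $K_j^t$ only on $\{|x|\ge t\}$ (this is \thref{lem:inaux}), so the defect kernel $E_j^t=K_j^t-\mK_j^t$ vanishes there and equals $-\mK_j^t$ on $\{|x|<t\}$. You propose to estimate the maximal defect operator ``from the explicit kernels, exploiting the radiality of $b^t_k$ from \thref{pro:fact} together with the cancellation expressed by \eqref{eq:RKB} and \eqref{eq:RKBev}.'' Those identities belong to the factorization $R_P^t=M_k^t R_P$ of Section~\ref{sec:fa} and play no role in bounding $E_j^t$: the kernel $b^t_k$ does not appear in the defect at all, and \eqref{eq:RKB}/\eqref{eq:RKBev} are statements about $R_P(b)$, not about $K_j^t-\mK_j^t$. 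Without a mechanism that is dimension-free, the defect is not controlled, and this is where the power of $p^*$ and the independence of $d$ are most at risk.

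What the paper actually does for $D^*=\sup_t|\sum_j D_j^t R_j|$ is a \emph{real} method of rotations: writing $D_j^t$ as an average over $\omega\in S^{d-1}$ of a one-dimensional directional truncated operator $\mH_\omega^t$ (see \eqref{eq:Djtrot}--\eqref{eq:mHomt}), and then proving the pointwise bound $\mH_\omega^t f\lesssim \mM_\omega^t f$ via the Gamma-function computation \eqref{eq:mHmM}, which is where the dimension-free constant is secured. Combined with \thref{pro:mhzt} and the real vector-valued Riesz estimate \thref{pro:Rve}, this gives the bound for $D^*$ in \thref{thm1quat}. This additional rotations argument for the error term and the Beta/Stirling estimate $\eqref{eq:mHmM}$ are the missing ingredients in your proposal; without them the restriction step is incomplete.
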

\begin{theorem} \thlabel{thm2''}
	Fix $k\in \N.$ For each $p \in (1, \infty)$ there is a constant $B(p,k)$ independent of the dimension $d$ and such that
	\begin{equation*} 
		\norm{ R^* f}_{p} \lesssim B(p,k)  \norm{  f}_{p}.
	\end{equation*}
whenever $f\in L^p.$ Moreover, $B(p,k)$ satisfies $B(p,k)\lesssim_k (p^*)^{2+k/2}.$   
\end{theorem}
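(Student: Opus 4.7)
The plan is to implement the three-step strategy outlined in Section \ref{sec:int}: extend the operators from $\R^d$ to $\CC^d$, apply the complex method of rotations of Iwaniec--Martin \cite{iwaniec_martin}, and restrict back to $\R^d$. First I would define extensions $\tR_j,$ $\tR_j^t$ of the Riesz transforms and their truncations on $\CC^d$. The non-truncated $\tR_j$ is defined cleanly on the Fourier multiplier side, while $\tR_j^t$ must be defined on the kernel level, since truncation does not interact well with the multiplier-level complex extension. Setting $\tR^t := \sum_{j \in I} \tR_j^t \tR_j$ and $\tR^* f := \sup_{t \in \Q_+} |\tR^t f|$, it suffices to prove the $\CC^d$-estimate $\|\tR^* f\|_{L^p(\CC^d)} \lesssim_k (p^*)^{2+k/2} \|f\|_{L^p(\CC^d)}$ and then transfer it to $R^*$ on $\R^d$ by restriction.

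For the $\CC^d$ estimate I would apply the complex method of rotations to write the kernel of each composition $\tR_j^t \tR_j$ as an integral over $\zeta \in S^{2d-1}$ of a directional operator built from a truncated $k$-th power of the complex Hilbert transform along $\zeta$ composed with its untruncated counterpart. Moving the supremum in $t$ inside the integral and using Minkowski, we obtain
\[
\tR^* f(z) \lesssim \int_{S^{2d-1}} |\Omega(\zeta)| \, \mH^*_\zeta f(z) \, d\zeta(\zeta),
\]
where $\mH^*_\zeta$ is the maximal operator associated with the directional truncated $k$-th power of the complex Hilbert transform and $\Omega$ collects the angular factors from the sum over $j \in I$. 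The $L^p$ estimate for $\mH^*_\zeta$ would be supplied by \thref{pro:hzt}, combined with the vector-valued estimate for the higher-order Riesz transforms on $\CC^d$ (\thref{pro:tRve}) via Khintchine's inequality. The target constant $(p^*)^{2+k/2}$ would then come from $(p^*)^{2}$ produced by the maximal truncated Hilbert transform composed with its untruncated version, and $(p^*)^{k/2}$ produced by Khintchine applied at order $k$.

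The final step is restriction. I would embed $f \in L^p(\R^d)$ into $L^p(\CC^d)$ via a tensor $F(x + iy) = f(x) \varphi(y)$ with a suitably chosen Schwartz profile $\varphi$ and restrict $\tR^t F$ back to $\R^d \times \{0\}$. For the non-truncated factor this reproduces the corresponding real Riesz transform on the multiplier side, but at the kernel level the restriction of $\tR^*$ differs from $R^*$ by a correction term that needs to be analyzed through explicit kernel computations and bounded by a tame, dimension-free operator on $L^p$. The main obstacle I anticipate is exactly this restriction step: showing that the discrepancy between the restriction of $\tR^*$ and $R^*$ is controlled without spoiling the dimension-free character of the constant or the sharp exponent $2+k/2$. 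A secondary technical point is bookkeeping the factors of $p^*$ throughout the complex-rotation argument, where the target exponent forces a delicate interplay between Khintchine's inequality (responsible for $k/2$) and the real-variable maximal inequalities on $\R$ (responsible for $2$).
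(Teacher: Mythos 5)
Your plan follows the paper's own route: extend to $\CC^d$, rotate, restrict, and separately bound the discrepancy between the restricted operator and $R^*$ (this is exactly the content of Sections \ref{sec:cmr} and \ref{sec:re}, specialized to the scalar case). The one concrete gap is the display
\[
\tR^* f(z) \lesssim \int_{S^{2d-1}} |\Omega(\zeta)| \, \mH^*_\zeta f(z) \, d\zeta,
\]
which, as written, cannot be dimension-free. The normalization constant produced by the complex method of rotations is $\sim d^{k/2}$ (see \eqref{eq:agkd}), and the only thing that cancels it is keeping the Riesz transforms \emph{inside} the argument of the directional maximal operator, as in \eqref{eq:tRtfHz}:
\[
\tR^t f(z) \sim d^{k/2} \int_{S^{2d-1}} H_\zeta^t\Big[\sum_{j\in I}\zeta_j \tR_j f\Big](z)\,d\zeta.
\]
After this, the Lemme of Duoandikoetxea--Rubio de Francia together with the orthogonality of $\zeta\mapsto\zeta_j$ and \eqref{eq:zjnorm} produce a compensating $d^{-k/2}$. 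Extracting an angular weight $\Omega(\zeta)$ in front of $\mH^*_\zeta f$ and letting $\mH^*_\zeta$ act on $f$ itself discards exactly that cancellation, leaving a stray $d^{k/2}$. You do cite \thref{pro:tRve} and \thref{pro:hzt}, so you clearly appreciate that the Riesz transforms play a role, but the domination must be set up so that the directional maximal operator acts on $\sum_{j\in I}\zeta_j\tR_j f$, not on $f$ alone.

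A smaller point: the correction term from the restriction is not merely ``explicit kernel computations and a tame operator bound.'' In the paper it is \thref{thm2quat}, and its proof is a second, real-variable method-of-rotations argument whose engine is a dimension-free pointwise bound of the directional difference kernel by a one-dimensional directional Hardy--Littlewood averaging operator (\thref{pro:mhzt}, in particular \eqref{eq:mHmM}), established via a Stirling-type estimate. This is more than routine cleanup, though your identification of the restriction step as the main obstacle is a fair reading of the situation.
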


\section{Extension to $\CC^d$ and the complex method of rotations}

\label{sec:cmr}
Here we extend the operators $R^t$ acting on $L^p(\R^d)$ to the operators $\tR^t$ acting on $L^p(\CC^d).$ Then we apply the complex method of rotations of Iwaniec and Martin \cite{iwaniec_martin} to $\tR^t$. 

Let $P\in \mH_k.$ For $z=(x_1+iy_1,\ldots,x_d+iy_d),$ $x\in \R^d,$ $y\in \R^d$ we denote
\begin{equation}
	\label{eq:KPt}
	\tK_P(z)= \tg_k \frac{P(z)}{\abs{z}^{2d+k}} \qquad\textrm{ with } \qquad  \tg_k = \frac{\Gamma\left(d+ \frac{k}{2}\right)}{\pi^{d}\Gamma\left( \frac{k}{2}\right)},
\end{equation}
and define, for $f\in \mS(\CC^d),$ 
\begin{equation} \label{eq:tR}
	\tR_P f(z) = \lim_{t \to 0} \tR_P^t f(z), \qquad\textrm{ where } \qquad \tR_P^t f(z) = \tg_k \int_{w\in \CC^d\colon\abs{w}>t} \frac{P(w)}{ \abs{w}^{2d+k}} f(z-w) dw.
\end{equation}

In \cite{iwaniec_martin} the authors considered the extension on the multiplier level whereas we need to write it on the kernel level. This makes no difference for the operator $\tR_P.$ However, the multiplier symbol corresponding to $\tR_P^t$ does not have a simple formula, thus writing the extension on a kernel level seems the only reasonable option here. 

Formulas \eqref{eq:KPt} and \eqref{eq:tR} lead us to define the extension of $R^t$ by
\begin{equation}
	\label{eq:tRt}
	\tR^t=\tR_k^t:= \sum_{j\in I} \tR_{j}^t \tR_{j}.
\end{equation}
Using the complex method of rotations  \cite[Section 6]{iwaniec_martin} we will prove $L^p(\CC^d)$ estimates for 
\begin{equation*} 
	\tR^*f(z)=\sup_{t\in\mathbb{Q}_+}|\tR^tf(z)|.
\end{equation*}

\begin{theorem} \thlabel{thm1c''}
	Fix $k\in \N.$ For each $p\in (1,\infty)$  there is a constant $A(p,k)$ independent of the dimension $d$ and such that for any $S\in\N$  we have
	\begin{equation*} 
		\norm{\left(\sum_{s=1}^S |\tR^* f_s|^2\right)^{1/2}}_{L^p(\CC^d)} \leqslant A(p,k) \norm{ \left(\sum_{s=1}^S | f_s|^2\right)^{1/2}}_{L^p(\CC^d)},
	\end{equation*}
	whenever $f_1,\ldots,f_S \in L^p(\CC^d).$ Moreover,  $A(p,k)$ satisfies $A(p,k) \lesssim_k (p^*)^{5/2+k/2}$.
\end{theorem}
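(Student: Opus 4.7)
The plan is to apply the complex method of rotations of Iwaniec--Martin to each $\tR_j^t$, reducing $\tR^t$ to an average over $S^{2d-1}$ of one-dimensional (complex) singular integrals, and then to control these using the two auxiliary results \thref{pro:hzt} and \thref{pro:tRve} advertised in the introduction. Using the $U(1)$-quotient parametrization $w=\zeta\omega$ with $\zeta\in\CC$ and $\omega\in S^{2d-1}$, one has the integration formula
\[
\int_{\CC^d}F(w)\,dw=\frac{1}{2\pi}\int_{\CC}\int_{S^{2d-1}}F(\zeta\omega)\,|\zeta|^{2d-2}\,d\sigma(\omega)\,d\zeta.
\]
Since $P_j$ is holomorphic of degree $k$, $P_j(\zeta\omega)=\zeta^k P_j(\omega)$ and $\{|w|>t\}$ becomes $\{|\zeta|>t\}$. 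Introducing the directional $k$-th power of the truncated complex Hilbert transform
\[
H_\omega^{(k),t}f(z):=\int_{|\zeta|>t}\frac{\zeta^k}{|\zeta|^{k+2}}\,f(z-\zeta\omega)\,d\zeta,
\]
this produces $\tR_j^tf(z)=\frac{\tg_k}{2\pi}\int_{S^{2d-1}}P_j(\omega)H_\omega^{(k),t}f(z)\,d\sigma(\omega)$. Summing over $j\in I$ and pulling the linear $H_\omega^{(k),t}$ inside gives
\[
\tR^tf(z)=\frac{\tg_k}{2\pi}\int_{S^{2d-1}}H_\omega^{(k),t}[g_\omega f](z)\,d\sigma(\omega),\qquad g_\omega f:=\sum_{j\in I}P_j(\omega)\tR_j f.
\]

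Passing to the maximal and vector-valued setting through $\sup_t\left|\int\cdot\right|\le\int\sup_t|\cdot|$, Minkowski for $\ell^2$, and Minkowski's integral inequality, I obtain
\[
\Big\|\Big(\sum_s|\tR^*f_s|^2\Big)^{1/2}\Big\|_{L^p(\CC^d)}\le\frac{|\tg_k|}{2\pi}\int_{S^{2d-1}}\Big\|\Big(\sum_s|H_\omega^{(k),*}g_{\omega,s}|^2\Big)^{1/2}\Big\|_{L^p(\CC^d)}\,d\sigma(\omega),
\]
with $g_{\omega,s}:=g_\omega f_s$. An application of \thref{pro:hzt} uniformly in $\omega$ strips off $H_\omega^{(k),*}$ at a cost of order $(p^*)^{2+k/2}$, leaving the task of bounding $\int_{S^{2d-1}}\bigl\|\bigl(\sum_s|g_{\omega,s}|^2\bigr)^{1/2}\bigr\|_{L^p}\,d\sigma$.

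For this remaining step I exploit a hidden orthogonality: by unitary invariance of $d\sigma$, the integral $\int_{S^{2d-1}}P_j(\omega)\overline{P_{j'}(\omega)}\,d\sigma(\omega)$ vanishes unless $\{j_1,\dots,j_k\}=\{j'_1,\dots,j'_k\}$ as sets, and in the matching case equals $\int_{S^{2d-1}}|\omega_1|^2\cdots|\omega_k|^2\,d\sigma\sim_k S_{2d-1}\,d^{-k}$, computed exactly as in the proof of \thref{pro:av}. Combining the Cauchy--Schwarz bound $\int F\,d\sigma\le S_{2d-1}^{1/2}\|F\|_{L^2(\sigma)}$ with Minkowski's exchange-of-norms (directly when $p\le 2$; for $p\ge 2$ via the standard linearization $\tR^*f=|\tR^{\tau(\cdot)}f|$ with measurable $\tau(z)$ followed by $\ell^2_s$-duality of the resulting linear operator) delivers
\[
\int_{S^{2d-1}}\Big\|\Big(\sum_s|g_{\omega,s}|^2\Big)^{1/2}\Big\|_{L^p}\,d\sigma\lesssim_k\frac{S_{2d-1}}{d^{k/2}}\Big\|\Big(\sum_{j\in I,\,s}|\tR_jf_s|^2\Big)^{1/2}\Big\|_{L^p(\CC^d)},
\]
and \thref{pro:tRve} absorbs the remaining $\tR_j$'s at an extra cost of order $(p^*)^{1/2}$. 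The resulting prefactor $\frac{|\tg_k|}{2\pi}\cdot\frac{S_{2d-1}}{d^{k/2}}=\frac{\Gamma(d+k/2)}{\pi\Gamma(k/2)\Gamma(d)\,d^{k/2}}$ is $O_k(1)$ in $d$ by \eqref{StirFra}, so the overall bound is genuinely dimension-free.

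The main technical obstacle will be the $p\ge 2$ case of the third paragraph: Minkowski's exchange-of-norms points in the wrong direction, and the sublinearity of $\tR^*$ precludes direct duality, so the linearization must be carried out in a way that remains compatible with the orthogonality computation used to extract the crucial $d^{-k/2}$ gain cancelling the $d^{k/2}$ growth of $\tg_k$. Composing the $(p^*)^{2+k/2}$ contribution from \thref{pro:hzt} with the $(p^*)^{1/2}$ contribution from \thref{pro:tRve} then reproduces the advertised bound $A(p,k)\lesssim_k(p^*)^{5/2+k/2}$.
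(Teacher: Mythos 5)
Your overall strategy matches the paper's—complex method of rotations to reduce $\tR^t$ to directional Hilbert transforms acting on $g_\omega=\sum_{j\in I}P_j(\omega)\tR_j f$, control of $H_\omega^{(k),*}$ by \thref{pro:hzt}, control of the Riesz vector by \thref{pro:tRve}—and for $p\leqslant 2$ your Cauchy--Schwarz-plus-Minkowski treatment of the sphere average, using only the $L^2(\sigma)$-orthogonality of the $P_j(\omega)$, is a valid variant of what the paper does. But you flag the central difficulty yourself, and it is a genuine gap: for $p\geqslant 2$ the Minkowski interchange $\bigl\|\,\|\cdot\|_{L^p_z}\bigr\|_{L^2_\sigma}\leqslant\bigl\|\,\|\cdot\|_{L^2_\sigma}\bigr\|_{L^p_z}$ fails, and the fix you propose does not close it. Dualizing $\|(\sum_s|g_{\omega,s}|^2)^{1/2}\|_{L^p_z}$ in $\ell^2_s(L^p_z)$ produces an optimizing dual sequence that depends on $\omega$, and that $\omega$-dependence destroys the orthogonality computation you rely on; linearizing $\sup_t$ by a measurable $\tau(z)$ happens upstream, before \thref{pro:hzt} is applied, and has no bearing on the $L^2_\sigma$-versus-$L^p_z$ exchange further down.

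The ingredient you are missing is Khintchine's inequalities \eqref{eq:chinczyn}--\eqref{eq:chinczyn'}: the paper encodes the $\ell^2_s$-sum via Rademacher integrals, turning the integrand into a scalar expression $\sum_jP_j(\zeta)\tR_j\bigl[\sum_sr_s(\xi)f_s\bigr]$ to which H\"older in $L^p(d\zeta)$ on the sphere and the Duoandikoetxea--Rubio lemma \eqref{eq:calcu'} apply pointwise in $(z,\xi)$; the decoupled Riesz transforms are then absorbed via the \emph{scalar} bound \eqref{eq:tRvs} rather than the vector-valued \eqref{eq:tRve}, and the Rademachers are undone at the end by another Khintchine. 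This is what produces the $p^{k/2}$ factor inside $(p^*)^{5/2+k/2}$; an $L^2(\sigma)$-only orthogonality argument cannot supply it, and your $p\leqslant 2$ argument indeed does not need it, but the working $p\geqslant 2$ argument does. Your constant bookkeeping also misquotes both auxiliary results: \thref{pro:hzt} contributes a single $p^*$, not $(p^*)^{2+k/2}$, and \eqref{eq:tRve} contributes $p^*p^{1/2}q^{(k+1)/2}$, not $(p^*)^{1/2}$; the correct assembly along the paper's route is $p^*\cdot p^{k/2}\cdot p^*q^{k/2}\cdot p^{1/2}\lesssim(p^*)^{5/2+k/2}$, uniformly in $p\in(1,\infty)$.
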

\begin{theorem} \thlabel{thm2c''}
	Fix $k\in \N.$ For each $p\in (1,\infty)$  there is a constant $ B(p,k)$ independent of the dimension $d$ and such that
	\begin{equation*} 
		\norm{ \tR^* f}_{L^p(\CC^d)} \leqslant B(p,k) \norm{  f}_{L^p(\CC^d)},
	\end{equation*}
	whenever $f\in L^p(\CC^d).$ Moreover,  $B(p,k)$ satisfies $B(p,k) \lesssim_k (p^*)^{2+k/2}$.
\end{theorem}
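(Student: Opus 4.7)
The approach is to apply the Iwaniec--Martin complex method of rotations to express $\tR^t$ on $\CC^d$ as a superposition, over directions $\zeta\in S^{2d-1}$, of truncated $k$-th powers of the complex Hilbert transform along the complex line $\CC\zeta$, and then invoke the dimension-free $L^p(\CC^d)$ bound for the associated maximal operator (\thref{pro:hzt}).

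The first step is to recast the kernel of $\tR^t = \sum_{j\in I}\tR^t_j\tR_j$ in a rotational form. Using the complex analog of \thref{pro:fact}, write $\tR^t_j = \tM^t_k \tR_j$ with $\tM^t_k$ a convolution operator of radial kernel on $\CC^d$; then $\tR^t = \tM^t_k \tilde A$ with $\tilde A = \sum_{j\in I}\tR^2_j$. Parametrizing $w \in \CC^d\setminus\{0\}$ as $w=\lambda\zeta$ with $\lambda\in\CC$ and $\zeta\in S^{2d-1}$ a real unit vector, the holomorphy identity $P_j(\lambda\zeta)=\lambda^k P_j(\zeta)$ together with $|w|=|\lambda|$ ensures that the truncation $|w|>t$ in the kernel of $\tR^t_j$ pulls back to $|\lambda|>t$ on the complex line $\CC\zeta$. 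Combining this with the radial structure of the kernel of $\tM^t_k$, one obtains a decomposition
\begin{equation*}
    \tR^t f(z) = \int_{S^{2d-1}} \Omega_k(\zeta)\, H^{k,t}_\zeta f(z)\, d\zeta,
\end{equation*}
where $H^{k,t}_\zeta$ is the truncated $k$-th power of the complex Hilbert transform along $\CC\zeta$ and $\Omega_k$ is a specific function on $S^{2d-1}$ built from $\sum_{j\in I}P_j(\zeta)^2$ and the radial profile of the $\tM^t_k$ kernel.

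The second step is to verify $\int_{S^{2d-1}}|\Omega_k(\zeta)|\,d\zeta \lesssim_k 1$, uniformly in $d$. This dimension-free integrability is analogous to the spherical integral \eqref{eq:lemA1'} in \thref{pro:av} (with $2d$ in place of $d$) and follows from the same Gaussian--polar trick together with Stirling's formula \eqref{StirF}. Taking absolute values inside the decomposition then gives the pointwise estimate
\begin{equation*}
    \tR^* f(z) \leq \int_{S^{2d-1}}|\Omega_k(\zeta)|\, H^{k,*}_\zeta f(z)\, d\zeta,
\end{equation*}
with $H^{k,*}_\zeta f = \sup_{t\in\Q_+}|H^{k,t}_\zeta f|$. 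Minkowski's integral inequality, combined with the uniform-in-$\zeta$ estimate $\|H^{k,*}_\zeta f\|_{L^p(\CC^d)} \lesssim_k (p^*)^{2+k/2}\|f\|_{L^p(\CC^d)}$ furnished by \thref{pro:hzt}, yields the desired bound.

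The main obstacle is the first step: producing the explicit decomposition so that the $\CC^d$-truncation $|w|>t$ translates cleanly to the one-complex-dimensional truncation $|\lambda|>t$, and identifying $\Omega_k$ in closed form so that its dimension-free integrability follows. This requires working at the level of the explicit kernel formulas from \eqref{eq:bdef} (and its complex analog) and carefully tracking how composition with $\tilde A$ interacts with the rotational decomposition. Once the decomposition is in hand, the second step is a routine dimensional computation and the third step is a direct application of the estimates already provided.
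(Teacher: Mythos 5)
Your proposed decomposition does not hold, and this is the crux of the problem. You write $\tR^t = \tM^t_k\tilde A$ with $\tilde A=\sum_{j\in I}\tR_j^2$ and then claim that $\tR^t f(z)=\int_{S^{2d-1}}\Omega_k(\zeta)\,H^{k,t}_\zeta f(z)\,d\zeta$ with $H^{k,t}_\zeta$ acting directly on $f$. Such an identity would force the convolution kernel of $\tR^t$ to be a truncated, $(-2d)$-homogeneous Calder\'on--Zygmund kernel supported on $\{|w|\ge t\}$. It is not: the kernel of $\tR_j^t\tR_j$ is the convolution $\tK_j^t*\tK_j$, in which the truncation $\ind{|w|\ge t}$ sits in the inner factor \emph{before} composing with the (full, non-truncated) singular convolution $\tK_j$; the resulting kernel is spread over all of $\CC^d$, including $\{|w|<t\}$. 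Moreover, in contrast with the real case, on $\CC^d=\R^{2d}$ the multiplier $\tilde a(\xi,\tau)=(-1)^k\sum_{j\in I}(\xi_{j_1}+i\tau_{j_1})^2\cdots(\xi_{j_k}+i\tau_{j_k})^2/|\xi+i\tau|^{2k}$ is homogeneous of degree zero but \emph{not} radial in $(\xi,\tau)$ (already for $k=1,d=1$ it equals $(\xi+i\tau)^2/|\xi+i\tau|^2$), so $\tilde A$ is not a constant multiple of the identity and the hoped-for collapse of $\tM^t_k\tilde A$ into a single rotationally-decomposable kernel does not happen. The averaging over $SO(d)$ in Section~\ref{sec:av} was introduced precisely because $\sum_j R_j^2$ fails to be a multiple of the identity; the situation on $\CC^d$ is no better.

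The paper sidesteps this entirely by applying the rotational decomposition \eqref{eq:rot2} term by term to $\tR_j^t$, \emph{after} the application of $\tR_j$, which produces \eqref{eq:tRtfHz}: the directional truncated operator $H_\zeta^t$ acts not on $f$ but on the $\zeta$-dependent function $\sum_{j\in I}\zeta_j\tR_j f$. That is why the proof cannot be closed by Minkowski and \thref{pro:hzt} alone. Two further ingredients are essential and are absent from your plan: (i) the Duoandikoetxea--Rubio de Francia lemma, which controls the $L^p(S^{2d-1})$ norm of $\sum_j\zeta_j\tR_jf(z)\in\mH_k^{2d}$ by its $L^2(S^{2d-1})$ norm at the cost of a factor $p^{k/2}$ (this is what cancels the $d^{k/2}$ coming from $\tg_k$); and (ii) the dimension-free vector-valued estimate $\|(\sum_{j\in I}|\tR_j f|^2)^{1/2}\|_p\lesssim p^*q^{k/2}\|f\|_p$ of \eqref{eq:tRvs}. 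Finally, you attribute to \thref{pro:hzt} a bound of order $(p^*)^{2+k/2}$; it gives only $p^*$ --- the exponent $2+k/2$ arises from combining all of the above, not from the directional maximal Hilbert transform alone.
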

The reminder of this section will be devoted to the proofs of \thref{thm1c''} and \thref{thm2c''}. From these results we shall obtain \thref{thm1'} and \thref{thm2'} provided we develop a restriction procedure from $\CC^d$ to $\R^d.$ As we already remarked this is not straightforward, since the restriction of the complex truncated Riesz transform is not the real truncated Riesz transform. Details of the restriction and estimates for the resulting operators are given in Section \ref{sec:re}.

We now focus on the proofs of \thref{thm1c''} and \thref{thm2c''}. Let $P\in \mH_k$. Note that
\[
2\pi \int_{\CC^{d}} F(w) \, dw = \int_{S^{2d-1}} \int_\CC F(\lambda \theta) \abs{\lambda}^{2d-2} \, d\lambda \, d\theta,
\]
where $F\in \mS(\CC^d)$ and $d\theta$ stands for the spherical measure on $S^{2d-1}$ normalized by the condition $\theta(S^{2d-1})=S_{2d-1}.$
Take $f\in \mS(\CC^d).$ Applying the above identity with $F(w) = \widetilde{\gamma}_k \frac{P(w)}{\abs{w}^{2d+k}} \ind{|w|\ge t} f(z-w)$ gives
\begin{align*}
	\widetilde{R}_P^t f(z) &= \widetilde{\gamma}_k \int_{\CC^d} \frac{P(w)}{\abs{w}^{2d+k}} \ind{|w|\ge t} f(z-w) \, dw \\
	&=\frac{ \widetilde{\gamma}_k}{2\pi} \int_{S^{2d-1}} \int_\CC \frac{P(\lambda \theta)}{\abs{\lambda}^{2d+k}} \ind{|\la|\ge t} f(z-\lambda \theta) \abs{\lambda}^{2d-2} \, d\lambda \, d\theta \\
	&= \frac{ \widetilde{\gamma}_k}{2\pi} \int_{S^{2d-1}} P(\theta) \int_\CC \left( \frac{\lambda}{\abs{\lambda}} \right)^k \frac{f(z-\lambda \theta)}{\abs{\lambda}^2} \ind{|\la|\ge t} \, d\lambda \, d\theta,
\end{align*}
where in the last equality above we used the $k$-homogeneity of $P$. 
This means that we got
\begin{equation} \label{eq:rot}
	\widetilde{R}_P^t f(z) = \frac{\widetilde{\gamma}_k}{2\pi} \int_{S^{2d-1}} P(\theta) H_{\theta,k}^t f(z) \, d\theta,
\end{equation}
where
\[
H_{\theta,k}^t f(z)=H_\theta^t f(z) := \int_\CC \left( \frac{\lambda}{\abs{\lambda}} \right)^k \frac{f(z-\lambda \theta)}{\abs{\lambda}^2} \ind{|\la|\ge t}(\lambda) \, d\lambda
\]
is the truncated directional $k$-th power of the complex Hilbert transform. Identity \eqref{eq:rot} can be written in terms of the probabilistic spherical measure $d\zeta$ on $S^{2d-1}$  in the following way
\begin{equation} \label{eq:rot2}
	\widetilde{R}_P^t f(z)= \frac{\Gamma\left( d+\frac{k}{2} \right)}{ \pi \Gamma\left( d \right) \Gamma\left( \frac{k}{2} \right)} \int_{S^{2d-1}} P(\zeta) H_\zeta^t f(z) \, d\zeta.
\end{equation}
The limiting case of \eqref{eq:rot2} is then
\begin{equation} \label{eq:rot3}
	\widetilde{R}_P f(z)= \frac{\Gamma\left( d+\frac{k}{2} \right)}{ \pi \Gamma\left( d \right) \Gamma\left( \frac{k}{2} \right)} \int_{S^{2d-1}} P(\zeta) H_\zeta f(z) \, d\zeta,
\end{equation}
where $$H_\zeta f=H_{\zeta,k} f={\rm p.v.\,}\int_\CC \left( \frac{\lambda}{\abs{\lambda}} \right)^k \frac{f(z-\lambda \zeta)}{\abs{\lambda}^2} \, d\lambda$$ is the directional $k$-th power of the complex Hilbert transform. Identities \eqref{eq:rot2} and \eqref{eq:rot3} were initially established for $f\in \mS(\CC^d).$ However, a density argument based on the $L^p(\CC^d)$ boundedness of $ H_\zeta^t$ and $ H_\zeta$ allows us to write these identities for all $f\in L^p(\CC^d).$ For further reference we note that when $k$ is fixed then
\begin{equation}
	\label{eq:agkd}
	 \frac{\Gamma\left( d+\frac{k}{2} \right)}{ \pi \Gamma\left( d \right) \Gamma\left( \frac{k}{2} \right)}\sim d^{k/2}.
\end{equation} 

In the proofs of \thref{thm1''} and \thref{thm2''} we shall need boundedness properties of the maximal operator
\[
	H_{\zeta}^*f(z)=H_{\zeta,k}^*f(z):=\sup_{t\in \mathbb{Q}_+} |H_{\zeta}^tf(z)|
\]
 associated to $H_{\zeta}^t.$ 
\begin{pro}
	\thlabel{pro:hzt}
	For each $1<p<\infty$  we have
\begin{equation*}
	\norm{\left(\sum_{s=1}^S |H_{\zeta}^* f_s|^2\right)^{1/2}}_{L^p(\CC^d)} \lesssim p^* \norm{ \left(\sum_{s=1}^S | f_s|^2\right)^{1/2}}_{L^p(\CC^d)}
\end{equation*}	
uniformly in $\zeta\in S^{2d-1}$ and the dimension $d.$  
\end{pro}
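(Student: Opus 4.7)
My plan is to Fubini down the proposition, along a single complex direction, to a one-complex-dimensional vector-valued maximal inequality for a planar Calder\'on--Zygmund operator, and to prove that reduced inequality by combining Cotlar's pointwise estimate (for $p$ bounded away from $1$) with a direct vector-valued weak-$(1,1)$ interpolation (for $p$ near $1$). All intermediate bounds take place on $\CC\simeq\R^{2}$, so dimension-freeness is automatic.

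First I would write $\CC^{d}=\CC\zeta\oplus\zeta^{\perp}$ in the Hermitian sense and parametrize $z\in\CC^{d}$ as $z=w\zeta+z'$ with $w\in\CC$ and $z'\in\zeta^{\perp}$. Setting $g_{z',s}(w):=f_{s}(w\zeta+z')$ and writing $z-\lambda\zeta=(w-\lambda)\zeta+z'$ inside the defining integral gives
\[
H_{\zeta}^{t}f_{s}(w\zeta+z')=T_{k}^{t}g_{z',s}(w),\qquad T_{k}^{t}g(w):=\int_{\CC}\Big(\frac{\lambda}{|\lambda|}\Big)^{k}\frac{g(w-\lambda)}{|\lambda|^{2}}\,\ind{|\lambda|\geqslant t}\,d\lambda.
\]
Because the decomposition $\CC^{d}=\CC\zeta\oplus\zeta^{\perp}$ is unitary, $dz=dw\,dz'$ and Fubini reduces the proposition to the fixed-dimensional planar estimate
\[
\bigg\|\bigg(\sum_{s=1}^{S}|T_{k}^{*}g_{s}|^{2}\bigg)^{\!1/2}\bigg\|_{L^{p}(\CC)}\lesssim p^{*}\bigg\|\bigg(\sum_{s=1}^{S}|g_{s}|^{2}\bigg)^{\!1/2}\bigg\|_{L^{p}(\CC)},
\]
where $T_{k}^{*}g=\sup_{t\in\Q_{+}}|T_{k}^{t}g|$ and the implicit constant is allowed to depend on $k$.

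For this planar inequality I would observe that $(\lambda/|\lambda|)^{k}|\lambda|^{-2}$ is a standard Calder\'on--Zygmund kernel on $\CC\simeq\R^{2}$: smooth away from the origin, homogeneous of degree $-2$, and with vanishing angular mean, since $\int_{0}^{2\pi}e^{ik\theta}\,d\theta=0$ for $k\geqslant 1$. Hence $T_{k}$ is an $L^{2}$-bounded principal-value operator and its $\ell^{2}$-valued extension is $L^{p}(\CC;\ell^{2})$-bounded with norm $\lesssim p^{*}$ by classical vector-valued CZ theory. For the maximal truncation I would use Cotlar's pointwise inequality $T_{k}^{*}g(w)\leqslant C\bigl(M(T_{k}g)(w)+Mg(w)\bigr)$, where $M$ is the Hardy--Littlewood maximal operator on $\R^{2}$ and $C$ depends only on $k$. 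Applying this termwise in $s$, then $\ell^{2}_{s}$-norming, and then $L^{p}(\CC)$-norming, and using vector-valued Fefferman--Stein together with the $L^{p}(\CC;\ell^{2})$ bound for $T_{k}$, gives the planar estimate for $p$ bounded away from $1$ (where $\|M\|_{L^{p}(\ell^{2})\to L^{p}(\ell^{2})}=O(1)$, so only the $T_{k}$-factor carries $p^{*}$). Substitution back into the Fubini identity then yields the proposition.

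The main obstacle is recovering the sharp order $p^{*}$ rather than $(p^{*})^{2}$ as $p\to 1$: a direct application of Cotlar followed by Fefferman--Stein would cost two $p^{*}$-factors in this limit, one from $M$ and one from $T_{k}$. To avoid this, near $p=1$ I would replace the Cotlar argument by a direct Marcinkiewicz interpolation, using the vector-valued weak-$(1,1)$ bound for the maximal truncation of the $L^{2}$-bounded $\ell^{2}$-valued CZ operator $T_{k}$ together with its trivial $L^{2}(\ell^{2})$ bound; this yields $\lesssim(p-1)^{-1}$ directly. Combining this with the large-$p$ Cotlar argument gives $p^{*}$ on the whole range. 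All steps occur in fixed real dimension two, so no constant depends on $d$ or on $\zeta\in S^{2d-1}$.
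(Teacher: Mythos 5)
Your reduction to the one–complex-dimensional operator $T_k^t$ on $\CC\simeq\R^2$ via the unitary splitting $\CC^d=\CC\zeta\oplus\zeta^{\perp}$ and Fubini is exactly the paper's first step (the paper calls it "a (complex) rotational invariance argument"), and your identification of $(\lambda/|\lambda|)^k|\lambda|^{-2}$ as a standard Calder\'on--Zygmund kernel with vanishing angular mean is what underlies everything. After that, however, you take a genuinely different route to control the maximal truncation. The paper never invokes Cotlar: it performs a smooth near/far kernel split $K^t=\varphi_tK^t+(1-\varphi_t)K^t$, dominates the near maximal part pointwise by the Hardy--Littlewood maximal operator (and then uses Fefferman--Stein), and handles the far maximal part by applying the abstract Banach-space-valued Calder\'on--Zygmund theorem \cite[Theorem 5.6.1]{grafakos} once, with $\mathcal B_2=\ell^2(\{1,\dots,S\};L^\infty(\Q_+))$ encoding the supremum over $t$; the hypotheses (5.6.1)--(5.6.3) are verified by hand for the $\mathcal L(\mathcal B_1,\mathcal B_2)$-valued kernel $\vec K$. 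This single application yields $p^*$ uniformly on $(1,\infty)$. Your argument instead splits the range of $p$: Cotlar plus Fefferman--Stein plus the $L^p(\ell^2)$ bound for $T_k$ for $p\geqslant 2$, and Marcinkiewicz interpolation of the vector-valued weak-$(1,1)$ bound for the maximal truncation $T_k^*$ against the $L^2(\ell^2)$ bound for $1<p<2$. Your diagnosis that the naive Cotlar route costs $(p^*)^2$ near $p=1$ is correct and your fix is sound; the only caveat is that you are invoking (rather than deriving) the $\ell^2$-valued weak-$(1,1)$ bound for the maximal truncation, which is precisely what the paper's near/far split together with the abstract vector-valued CZ theorem is set up to supply. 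In short: the paper's proof proves that endpoint bound in situ and gets both ends of the $p$-range in one go, while yours cites it and stitches two ranges together; both are valid, and both depend only on classical two-dimensional CZ theory so dimension-freeness is automatic.
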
 
\noindent The proof of \thref{pro:hzt} is standard therefore we omit it here. For the convenience of the reader we include the proof in the Appendix  \ref{sec:app}.

We will also need vector-valued estimates for $\{\tR_j(f_s)\},$ $j\in I,$ $s=1,\ldots,d.$
\begin{pro}
	\thlabel{pro:tRve}
	Fix $k\in \N.$ Then for each $1<p<\infty$ we have 
	\begin{equation}
		\label{eq:tRve}
		\norm{\left(\sum_{s=1}^S \sum_{j\in I} |\tR_{j}f_s|^2\right)^{1/2}}_{L^p(\CC^d)} \lesssim_k p^* p^{1/2} q^{\frac{k+1}{2}} \norm{ \left(\sum_{s=1}^S | f_s|^2\right)^{1/2}}_{L^p(\CC^d)},
	\end{equation}	
	\begin{equation}
		\label{eq:tRvs}
		\norm{\left( \sum_{j\in I} |\tR_{j}f|^2\right)^{1/2}}_{L^p(\CC^d)} \lesssim_k p^* q^{k/2} \norm{f}_{L^p(\CC^d)},
	\end{equation}	
	uniformly in the dimension $d$.
\end{pro}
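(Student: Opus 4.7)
My approach uses the complex method of rotations \eqref{eq:rot3} together with the vector-valued Hilbert transform bounds from \thref{pro:hzt}. For $j\in I$ the polynomial $P_j(z)=z_{j_1}\cdots z_{j_k}$ is a spherical harmonic of degree $k$ on $\R^{2d}$ (the indices being distinct), so \eqref{eq:rot3} applied with $P=P_j$ gives
\[
\tR_j f(z)=c(d,k)\int_{S^{2d-1}}P_j(\zeta)H_\zeta f(z)\,d\zeta,\qquad c(d,k)=\frac{\Gamma(d+k/2)}{\pi\Gamma(d)\Gamma(k/2)}.
\]

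The key step is the pointwise-in-$z$ estimate
\[
\Bigl(\sum_{j\in I}|\tR_j f(z)|^2\Bigr)^{1/2}\lesssim_k \Bigl(\int_{S^{2d-1}}|H_\zeta f(z)|^2\,d\zeta\Bigr)^{1/2}.
\]
This follows because the integration map $T\colon\psi\mapsto\bigl(\int_{S^{2d-1}} P_j\psi\,d\zeta\bigr)_{j\in I}$ from $L^2(d\zeta)$ to $\ell^2(I)$ satisfies $\norm{T}^2\leqslant k!\,\Gamma(d)/\Gamma(d+k)$. To see this, I would compute the adjoint $T^{\ast}a=\sum_j a_j\bar P_j$ and use the torus $U(1)^d$ invariance of $d\zeta$ to obtain $\int\bar P_j P_{j'}\,d\zeta=\Gamma(d)/\Gamma(d+k)$ when $j'$ is a permutation of $j$ and $0$ otherwise; then only the symmetric part of $(a_j)_j$ contributes to $T^{\ast}a$, and a Cauchy--Schwarz over permutations yields the bound. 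Combined with the log-convexity estimate $\Gamma(d+k/2)^2\leqslant\Gamma(d)\Gamma(d+k)$, one obtains $c(d,k)\norm{T}\leqslant\sqrt{k!}/(\pi\Gamma(k/2))$, which is dimension-free.

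For the scalar estimate \eqref{eq:tRvs} with $p\geqslant 2$, I would take $L^p$ norms of the pointwise bound and apply Minkowski's integral inequality in $L^{p/2}$ together with \thref{pro:hzt}:
\[
\Bigl\|\int|H_\zeta f|^2\,d\zeta\Bigr\|_{p/2}\leqslant\int\norm{H_\zeta f}_p^2\,d\zeta\lesssim (p^*)^2\norm{f}_p^2,
\]
giving a bound of order $p^*$ (hence $\leqslant p^*q^{k/2}$ since $q\leqslant 2$). For $p<2$ Minkowski goes the wrong way, and I would instead randomize over $j\in I$ using Khintchine, writing $\sum_j\varepsilon_j\tR_j=\tR_{Q_\varepsilon}$ with $Q_\varepsilon=\sum_j\varepsilon_j P_j\in\mH_k$, and estimate $\norm{\tR_{Q_\varepsilon}f}_p$ via the method of rotations plus moment bounds for the random polynomial $Q_\varepsilon$ on $S^{2d-1}$; the factor $q^{k/2}$ emerges naturally from the size of $\tR_{Q_\varepsilon}$ in $L^p$ via such a randomization argument.

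The vector-valued estimate \eqref{eq:tRve} follows by applying the pointwise bound to each $f_s$ and summing, obtaining $\sum_{s,j}|\tR_j f_s(z)|^2\lesssim_k\int\sum_s|H_\zeta f_s(z)|^2\,d\zeta$. For $p\geqslant 2$, Minkowski in $L^{p/2}$ combined with the $s$-indexed vector form of \thref{pro:hzt} delivers a clean $p^*\norm{(\sum_s|f_s|^2)^{1/2}}_p$ bound. For $p<2$ one carries out the same randomization strategy as for \eqref{eq:tRvs} but now with Rademacher variables $\eta_{s,j}$ indexed jointly by $s$ and $j$; an additional Kahane--Khintchine step in the $s$-variable accounts for the $p^{1/2}$ factor in the claimed constant $p^*p^{1/2}q^{(k+1)/2}$. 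The principal obstacle throughout is the regime $p<2$, where Minkowski's integral inequality reverses and one must rely on a carefully orchestrated Khintchine randomization, combined with the method of rotations, to preserve dimension-freeness while producing the quoted $p$- and $q$-dependence.
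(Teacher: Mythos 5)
Your pointwise estimate for $p\geqslant 2$ is correct and takes a genuinely different (and arguably cleaner) route than the paper. The operator $T\colon L^2(d\zeta)\to\ell^2(I)$, $T\psi=\bigl(\int_{S^{2d-1}}\zeta_j\psi\,d\zeta\bigr)_{j\in I}$, does satisfy $\norm{T}^2\leqslant k!\,\Gamma(d)/\Gamma(d+k)$ by the torus-invariance and Cauchy--Schwarz-over-permutations argument you sketch, and log-convexity $\Gamma(d+k/2)^2\leqslant\Gamma(d)\Gamma(d+k)$ makes $c(d,k)\norm{T}$ dimension-free; Minkowski in $L^{p/2}$ together with \thref{pro:hzt} then gives \eqref{eq:tRvs} (and analogously \eqref{eq:tRve}) for $p\geqslant 2$. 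The paper instead linearizes $(\sum_j|\tR_jf(z)|^2)^{1/2}=\sum_j\lambda_j(z)\tR_jf(z)$ with $\sum_j\lambda_j(z)^2=1$, applies H\"older in $\zeta$ with the exponent pair $(q,p)$, and uses the Duoandikoetxea--Rubio de Francia moment lemma $\norm{Y}_{L^q(d\zeta)}\lesssim q^{k/2}\norm{Y}_{L^2(d\zeta)}$ for degree-$k$ spherical harmonics; this single argument covers all $1<p<\infty$ and produces the factor $q^{k/2}$ directly.

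For $p<2$ your plan has a genuine gap. After Khintchine you are left controlling $\norm{\tR_{Q_\varepsilon}f}_p$ with $Q_\varepsilon=\sum_{j\in I}\varepsilon_j\zeta_j$, and the method of rotations plus H\"older gives a bound of the form $c(d,k)\,q^{k/2}\norm{Q_\varepsilon}_{L^2(d\zeta)}\,p^*\norm{f}_p$. But a Rademacher sign vector has $\sum_{j\in I}\varepsilon_j^2=|I|$, while $\norm{\zeta_j}_{L^2(d\zeta)}^2\sim d^{-k}$; hence $\E_\varepsilon\norm{Q_\varepsilon}_{L^2(d\zeta)}^2=|I|\,\Gamma(d)/\Gamma(d+k)$ is of size comparable to a $k$-dependent constant (and $\norm{Q_\varepsilon}_{L^2(d\zeta)}^2\leqslant k!\,|I|\,\Gamma(d)/\Gamma(d+k)$ deterministically). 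Since $c(d,k)\sim d^{k/2}$, a stray factor $d^{k/2}$ survives. The paper avoids exactly this because its linearization coefficients satisfy $\sum_j\lambda_j(z)^2=1$, so $\norm{\sum_j\lambda_j(z)\zeta_j}_{L^2(d\zeta)}\lesssim d^{-k/2}$ and the $d^{k/2}$ from $c(d,k)$ cancels. The $\ell^2(I)$-norm of a sign vector is $\sqrt{|I|}\sim d^{k/2}$, not $1$, and no moment bound for $Q_\varepsilon$ on $S^{2d-1}$ can repair this normalization mismatch; one needs the $z$-dependent, $\ell^2$-normalized linearization (equivalently a duality argument), not a Khintchine randomization over $j\in I$, to obtain a dimension-free constant when $p<2$.
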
 
\thref{pro:tRve} can be proved by an iterative application of its $k=1$ case together with Khintchine's inequalities. However, such an approach produces worse constants than those in  \eqref{eq:tRve}, \eqref{eq:tRvs}. An important ingredient in the proof are properties of the functions $$\zeta_j=(x_{j_1}+iy_{j_1}) \cdots (x_{j_k}+iy_{j_k}).$$ Note that $\zeta_j,$ $j\in I,$ are orthogonal with respect to the inner product on $S^{2d-1}.$ Moreover,
	\begin{equation}
		\label{eq:zjnorm}
		\int_{S^{2d-1}} |\zeta_j|^2\,d\zeta\lesssim d^{-k}.
	\end{equation}
	Indeed, all the integrals on the left hand side of \eqref{eq:zjnorm} are equal for $j\in I$ and thus
	\begin{align*}
		\int_{S^{2d-1}} |\zeta_j|^2\,d\zeta&=\frac{1}{|I|}\int_{S^{2d-1}}\sum_{j\in I} |\zeta_j|^2\,d\zeta \leq \frac{1}{|I|}\int_{S^{2d-1}}\sum_{j\in [d]^k} |\zeta_j|^2\,d\zeta\\
		&= \frac{1}{|I|} \int_{S^{2d-1}}|\zeta|^{2k}\,d\zeta \lesssim d^{-k},
	\end{align*}
	since $|I|\approx d^{k}.$
	
	We justify \eqref{eq:tRve} and \eqref{eq:tRvs} separately, starting with the latter.
	
\begin{proof}[Proof of \eqref{eq:tRvs}] Take numbers $\lambda_j(f,z)=\lambda_j(z),$ $j\in I$, such that
	\begin{equation*} 
		\left( \sum_{\vj \in I} \abs{ \tR_j f(z) }^2 \right)^{1/2} = \sum_{\vj \in I} \lambda_j(z)  \tR_jf(z), \qquad \sum_{\vj \in I} \lambda_j^2(z) = 1.
	\end{equation*}
	Using \eqref{eq:rot3} and \eqref{eq:agkd} followed by H\"older's inequality we obtain
	\begin{align} \label{eq8}
		&\norm{	\left( \sum_{\vj \in I} \abs{\tR_j f}^2 \right)^{1/2} }_p^p = \int_{\CC^d} \abs{\sum_{\vj \in I} \lambda_j(z)  \tR_jf(z)}^p dz \nonumber \\
		&\lesssim^p  d^{kp/2}\int_{\CC^d} \abs{\int_{S^{2d-1}} \sum_{\vj \in I} \lambda_{j}(z) \zeta_j H_\zeta f(z) d\zeta }^p dz \nonumber \\
		&\leqslant d^{kp/2} \int_{\CC^d} \left(\int_{S^{2d-1}} \abs{\sum_{\vj \in I} \lambda_j(z) \zeta_j}^q d\zeta \right)^{p/q} \int_{S^{2d-1}} \abs{ H_\zeta f(z)}^p d\zeta dz;
	\end{align}

Now we deal with the first inner integral in \eqref{eq8}.  Since $\zeta_j\in \mH_k^{2d} $ for $j\in I,$  for fixed $z$ the function  $\zeta\mapsto \sum_{j\in I} \zeta_j \lambda_j(z)$ also belongs to $\mH_k^{2d}.$  Using \cite[Lemme, p. 195]{duo_rubio}, orthogonality of the functions $\zeta_j,$ $j\in I,$ inequality \eqref{eq:zjnorm}, and the formula ${\sum_{j\in I}\lambda_j(z)^2=1}$ we get
\begin{equation}
	\label{eq:calcul}
	\begin{split}
		&\left(\int_{S^{2d-1}} \abs{\sum_{\vj \in I} \lambda_j(z) \zeta_j}^q d\zeta \right)^{1/q} \lesssim q^{k/2} \left(\int_{S^{2d-1}} \abs{\sum_{\vj \in I} \lambda_j(z) \zeta_j}^2 d\zeta \right)^{1/2} \\
		&= q^{k/2}  \left(\int_{S^{2d-1}} \sum_{\vj \in I} \lambda_j(z)^2 \abs{\zeta_j}^2 d\zeta \right)^{1/2} \lesssim q^{k/2} \left(d^{-k}\sum_{\vj \in I} \lambda_j(z)^2 \right)^{1/2}\leq q^{k/2} d^{-k/2}.
	\end{split}
\end{equation}

Applying \eqref{eq:calcul} and coming back to \eqref{eq8} we obtain
\begin{align*}
	\norm{	\left( \sum_{\vj \in I} \abs{\tR_j f}^2 \right)^{1/2} }_p\lesssim q^{k/2}\left( \int_{S^{2d-1}} \norm{ H_\zeta f}_{L^p(\CC^d)}^p\,d\zeta\right)^{1/p}.
\end{align*}
Now \thref{pro:hzt} completes the proof of \eqref{eq:tRvs}.
\end{proof}

We are now ready to prove \eqref{eq:tRve}. This is similar to the proof of \eqref{eq:tRvs} with an addition of Khintchine's inequalities. For $s=1,2,\ldots$  we let $\{r_s\}$ be the Rademacher functions, see \cite[Appendix C]{grafakos}. These form an orthonormal set on $L^2([0,1])$.
Moreover we have Khintchine's inequalities (\cite[Appendix C.2]{grafakos})
\begin{equation} 
	\label{eq:chinczyn}
	\norm{\sum_{j=1}^\infty a_j r_j}_{L^p([0,1])} \lesssim p^{\frac12} \left( \sum_{j=1}^\infty \abs{a_j}^2 \right)^{1/2}
\end{equation}
and
\begin{equation}
	\label{eq:chinczyn'} 
	\left( \sum_{j=1}^\infty \abs{a_j}^2 \right)^{1/2}\lesssim \norm{\sum_{j=1}^\infty a_j r_j}_{L^p([0,1])} 
\end{equation}
for any complex sequence $(a_s)_{s=1}^\infty$ and $1 \leqslant  p < \infty.$ The explicit bounds on constants in  \eqref{eq:chinczyn} and \eqref{eq:chinczyn'} follow from explicit values of the optimal constants established by Haagerup \cite{Ha} together with Stirling's formula \eqref{StirF}.  
	\begin{proof}[Proof of \eqref{eq:tRve}]
		Take numbers $\lambda_{j,s}(z,\{f_s\})=\lambda_{j,s}(z),$ $j\in I$, $s=1,\ldots,S,$ such that
		\begin{equation} \label{eq7'}
			\left( \sum_{\vj \in I}\sum_{s=1}^S \abs{ \tR_j f_s(z) }^2 \right)^{1/2} = \sum_{s=1}^S\sum_{\vj \in I} \lambda_{j,s}(z)  \tR_jf_s(z), \qquad \sum_{s=1}^S\sum_{\vj \in I} \lambda_{j,s}^2(z)= 1.
		\end{equation}
	Using \eqref{eq7'}, \eqref{eq:rot3}, and \eqref{eq:agkd} we obtain
\begin{align} \label{eq8'}
		&\norm{\left( \sum_{\vj \in I}\sum_{s=1}^S \abs{\tR_j f_s}^2 \right)^{1/2}}_p^p = \int_{\CC^d} \abs{\sum_{s=1}^S \sum_{\vj \in I} \lambda_{j,s}(z)  \tR_jf_s(z)}^p dz \nonumber \\
		&\lesssim^p  d^{kp/2}\int_{\CC^d} \abs{\int_{S^{2d-1}} \sum_{s=1}^S\sum_{\vj \in I} \lambda_{j,s}(z) \zeta_j H_\zeta f_s(z) d\zeta }^p dz.
	\end{align}
Orthogonality of the Rademacher functions $\{r_s\}$ and H\"{o}lder's inequality imply
	\begin{equation}
		\label{eq:rad1}
		\begin{split}
	&d^{kp/2}\int_{\CC^d} \abs{\int_{S^{2d-1}} \sum_{s=1}^S\sum_{\vj \in I} \lambda_{j,s}(z) \zeta_j H_\zeta f_s(z) d\zeta }^p dz\\
	&=d^{kp/2}	\int_{\CC^d} \abs{\int_{S^{2d-1}}\int_0^1 \left(\sum_{s=1}^S\sum_{\vj \in I} r_s(\xi)\lambda_{j,s}(z) \zeta_j \right)\left(\sum_{s=1}^S r_s(\xi) H_\zeta f_s(z)\right) d\xi\,d\zeta }^p dz\\
	&\leq d^{kp/2}\int_{\CC^d} \left( \int_{S^{2d-1}} \int_0^1 \abs{\sum_{s=1}^S\sum_{\vj \in I} r_s(\xi)\lambda_{j,s}(z) \zeta_j }^q d\xi \, d\zeta \right)^{p/q}\\
	&\hspace{2.5cm} \times \int_{S^{2d-1}} \int_0^1\abs{\sum_{s=1}^S r_s(\xi) H_\zeta f_s(z)}^p\,d\xi\,d\zeta\,dz.
	\end{split}
	\end{equation}
Denote
\begin{equation*}
	Q_{S,q}(z):=\left(\int_{S^{2d-1}} \int_0^1 \abs{\sum_{s=1}^S\sum_{\vj \in I} r_s(\xi)\lambda_{j,s}(z) \zeta_j }^q  d\xi d\zeta\right)^{1/q}
\end{equation*}
Then, coming back to \eqref{eq8'} and using Khintchine's inequality \eqref{eq:chinczyn} to the second factor in the last inequality in \eqref{eq:rad1} we reach
\begin{equation*}
	\begin{split}
		&\norm{\left( \sum_{\vj \in I}\sum_{s=1}^S \abs{\tR_j f_s}^2 \right)^{1/2}}_p^p \lesssim^p p^{p/2} d^{kp/2}\norm{Q_{S,q}}_{L^{\infty}(\CC^d)}^p\int_{S^{2d-1}}\int_{\CC^d} \left(\sum_{s=1}^S |H_\zeta f_s(z)|^2\right)^{p/2}  \,dz \, d\zeta.
	\end{split}
\end{equation*}
Thus, \thref{pro:hzt} implies
\begin{equation*}
		\norm{\left( \sum_{\vj \in I}\sum_{s=1}^S \abs{\tR_j f_s}^2 \right)^{1/2}}_p \lesssim p^* p^{1/2} d^{k/2}\norm{Q_{S,q}}_{L^{\infty}(\CC^d)}\norm{\left(\sum_{s=1}^S \abs{f_s}^2 \right)^{1/2}}_{L^p(\CC^d)}.
\end{equation*} 
Therefore, the proof of \eqref{eq:tRve} will be completed if we justify that
\begin{equation}
	\label{eq:Qinf}
	\norm{Q_{S,q}}_{L^{\infty}(\CC^d)}\lesssim q^{\frac{k+1}{2}} d^{-k/2}.
\end{equation}
	
	The proof of \eqref{eq:Qinf} splits into two cases. 
	
	If $q \geqslant 2$, we apply Khintchine's inequality \eqref{eq:chinczyn}, Minkowski's inequality and \cite[Lemme, p.\ 195]{duo_rubio}, 
	obtaining
	\begin{equation*} 
		\begin{aligned}
		(Q_{S,q}(z))^q&\lesssim^q q^{q/2} \int_{S^{2d-1}} \left( \sum_{s=1}^S \abs{ \sum_{\vj \in I} \lambda_{j,s}(z) \zeta_j }^2 \right)^{q/2} d\zeta \\
			&\leqslant q^{q/2} \left( \sum_{s=1}^S \left( \int_{S^{2d-1}} \left| \sum_{\vj \in I} \lambda_{j,s}(z) \zeta_j \right|^q d\zeta \right)^{2/q} \right)^{q/2} \\
			&\lesssim^q q^{q/2} q^{kq/2} \left(  \sum_{s=1}^S \int_{S^{2d-1}} \abs{ \sum_{\vj \in I} \lambda_{j,s}(z) \zeta_j }^2 d\zeta \right)^{q/2},
		\end{aligned}
	\end{equation*}
uniformly in $z\in \CC^d.$
	Here an application of \cite[Lemme, p.\ 195]{duo_rubio} is justified since $\zeta_j\in \mH_k^{2d}$ for  $j\in I$ and thus also the sum   $\sum_{\vj \in I} \lambda_{j,s}(z) \zeta_j$ belongs to $\mH_k^{2d}$ for each fixed $z\in \CC^d$. Now, using the orthogonality of $\zeta_j,$ $j\in I,$ inequality \eqref{eq:zjnorm} and the formula $\sum_{s=1}^S\sum_{\vj \in I} \lambda_{j,s}^2(z)= 1$ we see that
	\begin{equation*}
		\begin{split}
		(Q_{S,q}(z))^q
			&\lesssim^q q^{q/2} q^{kq/2}\left( \sum_{s=1}^S \int_{S^{2d-1}} \sum_{\vj \in I} \lambda_{j,s}(z)^2 \abs{\zeta_j}^2 \, d\zeta \right)^{q/2}\\
			&=  q^{q/2} q^{kq/2} \left(d^{-k} \sum_{s=1}^S \sum_{\vj \in I} \lambda_{j,s}(z)^2 \right)^{q/2}\lesssim q^{q/2} q^{kq/2} d^{-kq/2}.
		\end{split}
	\end{equation*}
Therefore, \eqref{eq:Qinf} is justified in the case $q \geqslant 2.$   
	
	If on the other hand $1 < q < 2$, an application of H\"older's inequality together with \eqref{eq:Qinf} in the case $q=2$ shows that
	\begin{align*}
		Q_{S,q}(z)\leq Q_{S,2}(z)\lesssim d^{-k/2}.
	\end{align*}
This completes the proof of \eqref{eq:Qinf} and thus also the proof of \eqref{eq:tRve} from \thref{pro:tRve}.
		
	\end{proof}

We are now ready to prove \thref{thm1c''} and \thref{thm2c''}. In both the proofs we shall need the formula
\begin{equation}
	\label{eq:tRtfHz}
\tR^t f(z)= \frac{\Gamma\left( d+\frac{k}{2} \right)}{ \pi \Gamma\left( d \right) \Gamma\left( \frac{k}{2} \right)}  \int_{S^{2d-1}} H_\zeta^t\left[\sum_{j\in I} \zeta_j \tR_j f \right](z)\, d\zeta,
\end{equation}
which follows from  \eqref{eq:tRt} and \eqref{eq:rot2}. We start with the proof of \thref{thm2c''}.
\begin{proof}[Proof of \thref{thm2c''}]
\label{p:pfthm2c''}
Using \eqref{eq:tRtfHz} and \eqref{eq:agkd} we see that
\[
|\tR^* f(z)|\lesssim d^{k/2} \int_{S^{2d-1}} H_\zeta^*\bigg[\sum_{j\in I} \zeta_j \tR_j f \bigg](z)\, d\zeta,\qquad z\in \CC^d.\]
Hence, Minkowski's integral inequality followed by \thref{pro:hzt} show that
\begin{align*}
\norm{\tR^* f}_{L^p(\CC^d)}\lesssim p^* d^{k/2} \int_{S^{2d-1}}\norm{ \sum_{j\in I} \zeta_j \tR_j f }_{L^p(\CC^d)}\,d\zeta.
\end{align*}
Using H\"older's inequality and Fubini's theorem we obtain
\begin{equation}
	\label{eq:Rt1}
\norm{\tR^* f}_{L^p(\CC^d)}\lesssim p^* d^{k/2}\left(\int_{\CC^d}\int_{S^{2d-1}}\left|\sum_{j\in I} \zeta_j \tR_j f(z) \right|^p\,d\zeta\,dz\right)^{1/p}.
\end{equation}
Since for fixed $z$ the function  $\zeta\mapsto \sum_{j\in I} \zeta_j \tR_j f(z)$ belongs to $\mH_k^{2d}$, applying \cite[Lemme, p. 195]{duo_rubio} we obtain
	\begin{equation*}
	\begin{split}
		\left(\int_{S^{2d-1}} \abs{\sum_{j \in I} \zeta_j \tR_j f(z)}^p d\zeta\right)^{1/p} \lesssim p^{k/2} \left(\int_{S^{2d-1}} \abs{\sum_{j \in I} \zeta_j \tR_j f(z)}^2 \,d\zeta\right)^{1/2}.
	\end{split}
\end{equation*}
Using orthogonality and \eqref{eq:zjnorm} we thus see that
	\begin{equation}
	\label{eq:calcu'}
		\left(\int_{S^{2d-1}} \abs{\sum_{j \in I} \zeta_j \tR_j f(z)}^p d\zeta\right)^{1/p} \lesssim d^{-k/2}\,p^{k/2}  \left(\sum_{j \in I} | \tR_j f(z)|^2 \right)^{1/2},
\end{equation}
which, together with \eqref{eq:Rt1} leads to
\[
	\norm{ \tR^* f}_{L^p(\CC^d)} \lesssim p^* p^{k/2} \norm{\left(\sum_{j \in I} | \tR_j f|^2 \right)^{1/2}}_{L^p(\CC^d)}.
\]
Thus, \eqref{eq:tRvs} from \thref{pro:tRve} completes the proof of \thref{thm2c''}.

\end{proof}

We finish this section with the proof of \thref{thm1c''}. 
\begin{proof}[Proof of \thref{thm1c''}]
	\label{p:pfthm1c''}
	Using \eqref{eq:tRtfHz}, \eqref{eq:agkd}, and Minkowski's integral inequality on the space $\ell^2(\{1,...,S\};L^{\infty}(\Q_+))$  we see that
	\[
		\left(\sum_{s=1}^S |\tR^* f_s(z)|^2\right)^{1/2}\lesssim d^{k/2} \int_{S^{2d-1}} \left(\sum_{s=1}^S\bigg(H_\zeta^*\bigg[\sum_{j\in I} \zeta_j \tR_j f_s \bigg](z)\bigg)^2\right)^{1/2}\, d\zeta,\qquad z\in \CC^d.
	\]
	Thus, another application of Minkowski's integral inequality followed by \thref{pro:hzt} gives 
	\[
		\norm{\left(\sum_{s=1}^S |\tR^* f_s|^2\right)^{1/2}}_{L^p(\CC^d)}\lesssim p^* d^{k/2} \int_{S^{2d-1}} \norm{\left(\sum_{s=1}^S\bigg|\sum_{j\in I} \zeta_j \tR_j f_s \bigg|^2\right)^{1/2}}_{L^p(\CC^d)}\, d\zeta.
	\]
	Using Khintchine's inequality \eqref{eq:chinczyn'} followed by H\"older's inequality on $S^{2d-1}$ we see that 
	\begin{align*}
		&\norm{\left(\sum_{s=1}^S |\tR^* f_s|^2\right)^{1/2}}_{L^p(\CC^d)}\\
		&\lesssim p^* d^{k/2} \int_{S^{2d-1}} 	\left(\int_{\CC^d}\int_0^1 \bigg|\sum_{s=1}^S 	r_s(\xi)\sum_{j\in I}  \zeta_j \tR_j f_s(z)\bigg|^p\,d\xi\,dz\right)^{1/p}\,d\zeta\\
		& \lesssim p^* d^{k/2} \left(\int_{\CC^d}\int_0^1 \int_{S^{2d-1}}\left|\sum_{j\in I} \zeta_j \tR_j\bigg[\sum_{s=1}^S r_s(\xi)  f_s(z)\bigg]\right|^p\,d\zeta\,d\xi\,dz\right)^{1/p}.
	\end{align*}
	Finally, \eqref{eq:calcu'} followed by \eqref{eq:tRvs} from \thref{pro:tRve} and Khintchine's inequality \eqref{eq:chinczyn} give
	\begin{align*}
		&\norm{\left(\sum_{s=1}^S |\tR^* f_s|^2\right)^{1/2}}_{L^p(\CC^d)}\lesssim p^*p^{k/2} 	\left(\int_{\CC^d} \int_0^1 \left(\sum_{j\in I}\bigg|\tR_j\bigg[\sum_{s=1}^S r_s(\xi)  f_s(z)\bigg]\bigg|^2\right)^{p/2} \, d\xi \, dz \right)^{1/p}\\
		&\lesssim (p^*)^{2+k/2} \left( \int_{\CC^d} \int_0^1 \left|\sum_{s=1}^S r_s(\xi)  	f_s(z)\right|^p \, d\xi \, dz \right)^{1/p}  \lesssim (p^*)^{5/2+k/2} \left( \int_{\CC^d}\left(\sum_{s=1}^S | f_s|^2\right)^{p/2}\,dz\right)^{1/p}.
	\end{align*}
	The proof of \thref{thm1c''} is thus completed.
	
\end{proof}

\section{Restriction to the initial Riesz transforms}
\label{sec:re}
The purpose of this section is twofold. Firstly, we restrict the maximal operator $\tR^*$ acting on $L^p(\CC^d)$ to a maximal operator $\mR^*$ acting on $L^p(\R^d).$ This is done in a way which preserves estimates for the norms. However, the restricted maximal operator $\mR^*$ is not the same as $R^*.$ Therefore, we need to estimate their difference, which is done in the second part of Section \ref{sec:re}.

\subsection{Bounding the restriction $\mR^*$ of $\tR^*$.}
In the previous section in \thref{thm1c'',thm2c''}, we proved dimension-free estimates for the operator $\tR^*$ acting on $L^p(\CC^d)$. An approach similar to \cite[Chapter 4]{iwaniec_martin} leads to dimension-free estimates for the restriction of this operator to $L^p(\R^d)$ which we now describe.

To elaborate, for $x\in \R^d$ and $t>0$ we define the restricted kernel $\mK_j^t(x)$ by 
\begin{equation}
	\label{eq:mRjt}
	\begin{split}
	\mK_j^t(x)&=\widetilde{\gamma}_k S_{d-1} \frac{x_j}{\abs{x}^{d+k}} \int_{\sqrt{\frac{t^2}{\abs{x}^2}-1}}^\infty \frac{r^{d-1}}{\left( 1+r^2 \right)^{d+k/2}} \, dr,\qquad \textrm{for }|x|<t,\\
	\mK_j^t(x)&=K_j^t(x),\qquad \textrm{for }|x| \ge t. 
	\end{split}
\end{equation}
Recall that $K_j^t$ is the kernel given by \eqref{eq:KP} when $P_j(x)=x_{j_1}\cdots x_{j_k},$ $j\in I$. A short computation based on \eqref{eq:Sd-1}, \eqref{eq:gk}, and \eqref{eq:Bform} gives, for $x\neq 0,$ 
\begin{equation}
	\label{eq:Kjred}
	\begin{split}
	&	\lim_{t\to 0^+} \widetilde{\gamma}_k S_{d-1} \frac{x_j}{\abs{x}^{d+k}} \int_{\sqrt{\frac{t^2}{\abs{x}^2}-1}}^\infty \frac{r^{d-1}}{\left( 1+r^2 \right)^{d+k/2}} \, dr\\
	&=\frac{\Gamma(d+\frac{k}{2})}{\pi^{d/2}\Gamma(\frac{k}{2})\Gamma(\frac{d}{2})}\int_{0}^\infty \frac{2r^{d-1}}{\left( 1+r^2 \right)^{d+k/2}}\,dr\cdot \frac{x_j}{\abs{x}^{d+k}}=\gamma_k \frac{x_j}{\abs{x}^{d+k}}=K_j(x).
	\end{split}
\end{equation}
 For $f\in L^p(\R^d)$ we let $\mR_j^tf=f*\mK_j^t$ and define
\begin{equation*}
	\mR^tf=\sum_{j\in I} \mR_j^t R_j f.
\end{equation*}
and
\begin{equation*}
	\mR^*f=\sup_{t\in \mathbb{Q}_+}|\mR^t f|.
\end{equation*}

A transference argument  leads to the two results below. The proofs of \thref{thm1tres,thm2tres} are based on ideas from \cite[Section 4]{iwaniec_martin}. However, compared to \cite[Section 4]{iwaniec_martin} extra difficulties arise. These complications stem from the fact that we need to restrict compositions of singular integral operators instead of just one singular integral operator. Furthermore, useful formulas for the multiplier symbols of $\tR_j^t$ or $\mR_j^t$ are not available. 
\begin{theorem} \thlabel{thm1tres}
	Fix $k\in \N.$ For each $p\in (1,\infty)$  there is a constant $A(p,k)$ independent of the dimension $d$ and such that for any $S\in\N$  we have
	\begin{equation*} 
		\norm{\left(\sum_{s=1}^S |\mR^* f_s|^2\right)^{1/2}}_{L^p(\R^d)} \leqslant A(p,k) \norm{ \left(\sum_{s=1}^S | f_s|^2\right)^{1/2}}_{L^p(\R^d)},
	\end{equation*}
	whenever $f_1,\ldots,f_S \in L^p(\R^d).$ Moreover, $A(p,k)$ satisfies  $A(p,k) \lesssim_k (p^*)^{5/2+k/2}$.
\end{theorem}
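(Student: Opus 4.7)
The plan is to transfer the bound of \thref{thm1c''} from $\CC^d$ down to $\R^d$ in the spirit of \cite[Chapter~4]{iwaniec_martin}. The starting observation is that the restricted kernel $\mK_j^t$ defined in \eqref{eq:mRjt} is exactly the integral of the complex truncated Riesz kernel along the imaginary directions:
\begin{equation}
	\label{eq:ker_restr}
	\mK_j^t(x) = \int_{|x+iy|\ge t}\tg_k\,\frac{P_j(x+iy)}{|x+iy|^{2d+k}}\,dy,\qquad x\in\R^d,\ j\in I.
\end{equation}
Indeed, expanding $P_j(x+iy)=\prod_{\ell=1}^{k}(x_{j_\ell}+iy_{j_\ell})$, and using that the denominator $(|x|^2+|y|^2)^{d+k/2}$ is even in each $y_\ell$ while the integration region $\{|y|^2\ge t^2-|x|^2\}$ is symmetric, every cross term containing some $y_{j_\ell}$ integrates to zero; what survives is $P_j(x)$ times a scalar integral that, after $y=|x|u$ and \eqref{eq:Bform}, reproduces \eqref{eq:mRjt}--\eqref{eq:Kjred}. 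On the Fourier side \eqref{eq:ker_restr} says that the symbol of $\mR_j^t$ is the restriction to $\{\eta=0\}\subset\R^{2d}$ of the symbol of $\tR_j^t$; the analogous statement for $R_j$ versus $\tR_j$ is Iwaniec--Martin's. Consequently the symbol of $\mR^t=\sum_{j\in I}\mR_j^t R_j$ equals the restriction to $\{\eta=0\}$ of the symbol of $\tR^t=\sum_{j\in I}\tR_j^t\tR_j$.

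To realize this restriction at the operator level I would use a tensor-extension trick. Fix $\phi\in\mS$ with $\phi(0)=1$, $\phi\ge 0$, and $\|\phi\|_{L^p(\R^d)}=1$, and for $\lambda>0$ set $F_{s,\lambda}(x,y)=f_s(x)\phi(\lambda y)$, so that
\[
	\bigg\|\bigg(\sum_{s=1}^S|F_{s,\lambda}|^2\bigg)^{1/2}\bigg\|_{L^p(\CC^d)}=\lambda^{-d/p}\bigg\|\bigg(\sum_{s=1}^S|f_s|^2\bigg)^{1/2}\bigg\|_{L^p(\R^d)}.
\]
A direct calculation based on \eqref{eq:ker_restr}, the $L^p$-boundedness of each $R_j$, and dominated convergence then shows that, for every $(x,y)\in\CC^d$, every $t\in\Q_+$, and every $s$,
$\lim_{\lambda\to 0^+}\tR^t F_{s,\lambda}(x,y)=\mR^t f_s(x).$
Inserting $\{F_{s,\lambda}\}$ into \thref{thm1c''}, dividing both sides by $\lambda^{-d/p}$, and passing to the liminf via Fatou on a slab $\{|y|\le\lambda^{-\beta}\}$ for some $\beta\in(0,1)$ (on which $\phi(\lambda y)\to 1$) yields the vector-valued bound for $\mR^*$ with the same constant $A(p,k)\lesssim_k (p^*)^{5/2+k/2}$.

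The main difficulty I anticipate is the rigorous justification of the pointwise limit above for the composite singular integral $\tR_j^t\tR_j$, whose kernel is only a Calder\'on--Zygmund distribution, together with a correct balancing of the slab thickness against $\lambda$ so that both $L^p$-normalizations match in the limit. A conceptually cleaner (though more abstract) alternative is to phrase the transference on the multiplier level and invoke a vector-valued de~Leeuw-type restriction theorem for the $\ell^2(\{1,\dots,S\};\ell^\infty(\Q_+))$-valued symbol of the family $\{\tR^t\}_{t\in\Q_+}$: restriction to $\{\eta=0\}$ preserves its Fourier multiplier norm and delivers the claim in one step.
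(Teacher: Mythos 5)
Your overall strategy is the same one the paper uses: extend tensorially to $\CC^d$, apply \thref{thm1c''}, and come back down via the identity $\int_{\R^d}\tK_j^t(x+iy)\,dy=\mK_j^t(x)$, which you derive correctly (your parity argument for the cross terms is equivalent to the polar-coordinate computation the paper gives in \thref{lem:inaux}). The difficulty you flag at the end --- making the normalizations match --- is, however, not a technical nuisance but the step at which your argument, as written, actually breaks.

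Concretely, after dividing by $\lambda^{-d/p}$ the left-hand side is $\lambda^{d/p}\bigl\|\bigl(\sum_s|\tR^*F_{s,\lambda}|^2\bigr)^{1/2}\bigr\|_{L^p(\CC^d)}$, and restricting the $y$-integration to the slab $\{|y|\le\lambda^{-\beta}\}$ with $\beta\in(0,1)$ produces a set of measure $\sim\lambda^{-\beta d}$. On that slab Fatou does give $\tR^*F_{s,\lambda}(x,y)\to\mR^*f_s(x)$ (the pointwise limit is plausible, and $\liminf\sup\ge\sup\lim$ takes care of the supremum), but the resulting lower bound carries the factor $\lambda^{d/p}\cdot\lambda^{-\beta d/p}=\lambda^{d(1-\beta)/p}\to 0$. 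So the inequality degenerates to $0\le A(p,k)\|\cdot\|_p$: no choice of $\beta<1$ can repair this, and $\beta=1$ destroys the property $\phi(\lambda y)\to 1$ on the slab. The root cause is that you keep $\tR^*$ fixed while only dilating the test function; the operator itself must be conjugated by the anisotropic dilation. That is exactly what the paper does: it introduces $\tR^{t,u}=\delta_{u^{-1}}\circ\tR^t\circ\delta_u$, which has the same $L^p(\CC^d)$ norm as $\tR^t$, and in the limit $u\to 0^+$ the $\eta$-factor of the test function survives ($\tR_j^{t,u}(f\otimes\eta)\to(\mR_j^tf)\otimes\eta$, see \eqref{eq:thm2tres6}), so both sides carry the same $\|\eta\|_{L^p}$ factor and nothing is lost. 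The paper then passes to the limit through a duality pairing (against $G\in\mS(\CC^d)$), not Fatou, which is what makes the dominated-convergence steps workable for the composite operator $\tR_j^t\tR_j$: the $\tR_j$-part is handled on the symbol level (its symbol extends continuously to $\eta=0$), while the $\tR_j^t$-part is handled on the kernel level via \thref{lem:inaux}, precisely because $\tK_j^t$ is not in $L^1(\R^{2d})$ and its Fourier transform cannot be freely restricted to a hyperplane --- this is also why the de~Leeuw-type shortcut you mention is not a one-liner here. For \thref{thm1tres} specifically, the paper replaces the scalar duality \eqref{eq:thm2tres1'} by duality between $L^p(\CC^d;\ell^2(S;\ell^\infty))$ and $L^q(\CC^d;\ell^2(S;\ell^1))$ and invokes \thref{thm1c''} instead of \thref{thm2c''}; otherwise the proof is verbatim the same transference.
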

\begin{theorem} \thlabel{thm2tres}
	Fix $k\in \N.$ For each $p\in (1,\infty)$  there is a constant $ B(p,k)$ independent of the dimension $d$ and such that
	\begin{equation*} 
		\norm{ \mR^* f}_{L^p(\R^d)} \leqslant B(p,k) \norm{  f}_{L^p(\R^d)},
	\end{equation*}
	whenever $f\in L^p(\R^d).$ Moreover,  $B(p,k)$ satisfies $B(p,k) \lesssim_k (p^*)^{2+k/2}$.
\end{theorem}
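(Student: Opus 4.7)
The plan is to transfer the $L^p(\CC^d)$ maximal bound for $\tR^*$ established in \thref{thm2c''} down to an $L^p(\R^d)$ bound for $\mR^*$ via a restriction argument in the spirit of Iwaniec--Martin \cite[Chapter 4]{iwaniec_martin}, suitably adapted to the maximal setting. The key input is the \emph{kernel trace identity}
\begin{equation*}
\mK_{\mathrm{tot}}^t(x)=\int_{\R^d}\tK_{\mathrm{tot}}^t(x,b)\,db,\qquad x\in \R^d,
\end{equation*}
where $\mK_{\mathrm{tot}}^t:=\sum_{j\in I}\mK_j^t * K_j$ and $\tK_{\mathrm{tot}}^t:=\sum_{j\in I}\tK_j^t * \tK_j$ are the convolution kernels of $\mR^t$ and $\tR^t$ respectively. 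By Fubini, partial integration in $b\in\R^d$ of a convolution in $\R^{2d}$ reduces to the convolution in $\R^d$ of the partially integrated factors, so this collapses to the single-kernel statement $\int_{\R^d}\tK_j^t(x,b)\,db=\mK_j^t(x)$ for each $j\in I.$ The latter is verified by expanding $P_j(a+ib)=\prod_\ell(a_{j_\ell}+ib_{j_\ell})$: the pairwise distinctness of $j_1,\ldots,j_k$ together with the symmetry $b_{j_\ell}\to -b_{j_\ell}$ eliminates every cross term and reduces the integrand to the real monomial $P_j(a),$ after which the remaining radial integral in $b\in\R^d$ is computed via the substitution $\rho=|x|r$ and the Beta identity \eqref{eq:Bform}. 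This reproduces exactly the formula \eqref{eq:mRjt} for $|x|<t$ and recovers $K_j^t(x)$ for $|x|\geq t,$ as already observed in \eqref{eq:Kjred}.

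To turn this identity into the norm estimate, I would fix a nonnegative Schwartz bump $\phi\in\mS(\R^d)$ with $\phi(0)=1,$ set $\phi^\delta(y)=\phi(\delta y)$ and $\tilde f_\delta(x,y)=f(x)\phi^\delta(y),$ and write
\begin{equation*}
\tR^t \tilde f_\delta(x,y) = \phi^\delta(y)\,\mR^t f(x) + E^t_\delta(x,y),\quad E^t_\delta(x,y):=\int \tK_{\mathrm{tot}}^t(a,b) f(x-a)\bigl[\phi^\delta(y-b)-\phi^\delta(y)\bigr]\,da\,db,
\end{equation*}
which is a direct consequence of the kernel identity above. Taking absolute values, applying $\sup_{t\in\Q_+},$ passing to the $L^p(\CC^d)$ norm, and using \thref{thm2c''} together with $\|\tilde f_\delta\|_{L^p(\CC^d)}=\delta^{-d/p}\|f\|_p\|\phi\|_p$ yields
\begin{equation*}
\|\phi^\delta\|_p\,\|\mR^* f\|_p \leq B(p,k)\,\|\phi^\delta\|_p\,\|f\|_p + \|G_\delta\|_{L^p(\CC^d)},\qquad G_\delta:=\sup_{t\in\Q_+}|E^t_\delta|.
\end{equation*}
Dividing by $\|\phi^\delta\|_p=\delta^{-d/p}\|\phi\|_p$ and letting $\delta\to 0^+$ then produces $\|\mR^* f\|_p\leq B(p,k)\|f\|_p,$ provided that $\delta^{d/p}\|G_\delta\|_p\to 0$ as $\delta\to 0^+.$

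The main obstacle will be this final error control, which must be uniform in $t\in\Q_+.$ The naive mean value bound $|\phi^\delta(y-b)-\phi^\delta(y)|\leq \delta\|\nabla\phi\|_\infty|b|$ produces only $|E^t_\delta(x,y)|\lesssim \delta\int|b|\,|\tK_{\mathrm{tot}}^t(a,b)|\,|f(x-a)|\,da\,db,$ but the weighted kernel $|b|\,\tK_{\mathrm{tot}}^t$ is only borderline integrable at infinity, and the supremum over $t$ blocks a direct dominated-convergence argument. My envisaged resolution is a two-region split of the $b$-integration: on the core $|b|\leq \delta^{-1/2}$ the Taylor remainder is applied directly, while on the tail $|b|>\delta^{-1/2}$ one reapplies \thref{thm2c''} to the tail piece of $\tilde f_\delta$ and exploits the fact that $\delta^{d/p}\|\phi^\delta\ind{|y|>\delta^{-1/2}}\|_p\to 0.$ A conceptually cleaner alternative would be to read the kernel identity as the Fourier-multiplier restriction $m_{\mR^t}(\xi)=m_{\tR^t}(\xi,0)$ and invoke a de Leeuw-type transference principle for maximal multiplier operators, which would deliver \thref{thm2tres} directly with no additional loss in the constants.
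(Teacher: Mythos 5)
Your overall strategy---localize in the imaginary variable by a Schwartz bump, invoke \thref{thm2c''}, and pass to a limit using the single-factor identity $\int_{\R^d}\tK_j^t(x+iy)\,dy=\mK_j^t(x)$---is the same skeleton as the paper's \thref{lem:inaux}-based argument, and your derivation of that identity (parity in $b$ plus \eqref{eq:Bform}) is exactly \thref{lem:inaux}. But there are two substantive gaps that the paper deliberately engineers around, and your proposal as written does not close them.

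First, the ``kernel trace identity'' $\int_{\R^d}\tK_{\mathrm{tot}}^t(x,b)\,db=\mK_{\mathrm{tot}}^t(x)$ is not a pointwise statement about $L^1$ functions. The distributional kernel of $\tR^t=\sum_{j\in I}\tR^t_j\tR_j$ involves the factor $\tK_j$, which is a Calder\'on--Zygmund principal--value kernel; it is not absolutely integrable, it has cancellation in its definition, and the convolution $\tK_j^t*\tK_j$ acquires a singular (e.g.\ Dirac-type) contribution at the origin from the low-frequency behaviour of the product multiplier. Consequently your claimed reduction, ``partial integration in $b$ of a convolution in $\R^{2d}$ reduces to the convolution in $\R^d$ of the partially integrated factors,'' is a Fubini interchange that is not justified: the double integral is not absolutely convergent, and the partially integrated factor $\int \tK_j(a,b')\,db'=K_j(a)$ is again only a principal value. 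The paper sidesteps this entirely by \emph{not} restricting the full composite kernel: in the proof of \thref{thm2tres} the two factors $\tR_j^t$ and $\tR_j$ are treated asymmetrically -- the truncated factor on the kernel level (where \thref{lem:inaux} and the absolute convergence \eqref{eq:thm2tresL1} make Lebesgue DCT legitimate), and the singular factor $\tR_j$ on the Fourier side, where the multiplier $\delta_u(m)\to m_0$ is a pointwise limit of uniformly bounded symbols and Plancherel does the rest.

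Second, the error control is where the approach really breaks. Your bound $|E^t_\delta|\lesssim\delta\int|b|\,|\tK_{\mathrm{tot}}^t(a,b)||f(x-a)|\,da\,db$ replaces $\tK_{\mathrm{tot}}^t$ by its absolute value, discarding precisely the cancellation that makes the operator bounded at all; $|\tK_{\mathrm{tot}}^t|\notin L^1(\R^{2d})$, so the right-hand side is infinite regardless of $\delta$. The proposed two-region split does not fix this: on the core you are still integrating $|\tK_{\mathrm{tot}}^t|$, and on the tail the quantity $\delta^{d/p}\norm{\phi^\delta\ind{|y|>\delta^{-1/2}}}_p = \norm{\phi\ind{|y|>\delta^{1/2}}}_p$ tends to $\norm{\phi}_p$, not to zero, so the reapplication of \thref{thm2c''} yields no gain. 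The paper removes the need for any uniform-in-$t$ quantitative error estimate by passing to the dual pairing \eqref{eq:thm2tres1'}: testing against a fixed Schwartz $G$ turns the limit $u\to0^+$ into a weak limit, each term of which can be handled by dominated convergence on its own, with no danger from the supremum in $t$. That dualization is the structural content of the proof, and your direct-norm version of the argument loses it.

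Finally, the suggested de Leeuw alternative is not available off the shelf. There is no closed-form multiplier for $\tR_P^t$ (the paper remarks on this explicitly), and even a hypothetical maximal de Leeuw transference would only give $L^p$ boundedness of the operator with symbol $\xi\mapsto m_{\tR^t}(\xi,0)$; identifying that restricted operator with $\mR^t$ is precisely what \thref{lem:inaux} and the surrounding kernel analysis are for, so nothing is saved.
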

The restriction procedure from  \thref{thm1c'',thm2c''} to \thref{thm1tres,thm2tres} will result in the kernels $\tK_j$ and $\tK_j^t$ defined in \eqref{eq:KPt} being integrated over their imaginary component $iy$ in $\R^d.$ This is the origin of the kernel $\mK_j^t$ as the next lemma justifies.
\begin{lemma}
	\thlabel{lem:inaux}
For each $t>0$ and $x\in \R^d$ it holds
	\begin{equation}
		\label{eq:iat}
		\int_{\R^d} \tK_j^t(x+iy) \, dy = \mK_j^t(x).
	\end{equation}
\end{lemma}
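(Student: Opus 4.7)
The plan is to split the integral into two regimes determined by the relation between $|x|$ and $t$, and to exploit the multiplicative structure of $P_j$ together with the rotational symmetry of the denominator in $y$ to reduce the $d$-dimensional integral to a $1$-dimensional one.

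First I would expand
\[
P_j(x+iy)=\prod_{l=1}^{k}(x_{j_l}+iy_{j_l})=\sum_{A\subseteq\{1,\dots,k\}}\Bigl(\prod_{l\in A}x_{j_l}\Bigr)\Bigl(\prod_{l\notin A}iy_{j_l}\Bigr).
\]
Because $j\in I$ the indices $j_1,\dots,j_k$ are pairwise distinct, so for any $A\subsetneq\{1,\dots,k\}$ the term contains an odd power of each $y_{j_l}$ with $l\notin A$, in at least one coordinate direction. Since the factor
\[
\frac{1}{(|x|^2+|y|^2)^{d+k/2}}\ind{|x|^2+|y|^2>t^2}
\]
depends on $y$ only through $|y|$ and is even in each coordinate $y_m$, any such term integrates to $0$ in $y$. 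Only the term $A=\{1,\dots,k\}$ survives, contributing $x_j:=x_{j_1}\cdots x_{j_k}$. Thus
\[
\int_{\R^d}\tK_j^t(x+iy)\,dy=\tg_k\,x_j\int_{\{y\in\R^d\,:\,|y|^2>\max(0,t^2-|x|^2)\}}\frac{dy}{(|x|^2+|y|^2)^{d+k/2}}.
\]

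Next I would pass to polar coordinates $y=r\theta$, $r\ge 0$, $\theta\in S^{d-1}$, and then substitute $r=|x|u$. The outer integral becomes
\[
\tg_k\,\frac{x_j}{|x|^{d+k}}\,S_{d-1}\int_{u_0}^{\infty}\frac{u^{d-1}}{(1+u^2)^{d+k/2}}\,du,
\]
where $u_0=\sqrt{t^2/|x|^2-1}$ when $|x|<t$ and $u_0=0$ when $|x|\ge t$.

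In the regime $|x|<t$ this lower limit is exactly the one appearing in the first branch of the definition \eqref{eq:mRjt}, so the identity \eqref{eq:iat} follows immediately. In the regime $|x|\ge t$ the lower limit is $0$, and using \eqref{eq:Bform} with $\alpha=k/2$ together with \eqref{eq:Sd-1} and \eqref{eq:gk} one computes
\[
S_{d-1}\,\tg_k\int_{0}^{\infty}\frac{u^{d-1}}{(1+u^2)^{d+k/2}}\,du=\frac{\Gamma\!\left(\frac{d+k}{2}\right)}{\pi^{d/2}\Gamma\!\left(\frac{k}{2}\right)}=\gamma_k,
\]
which is precisely the constant appearing in $K_j(x)=K_j^t(x)=\mK_j^t(x)$ for $|x|\ge t$. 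The only step requiring care is the parity argument, and this is where the assumption $j\in I$ (distinct indices) is essential; without it some of the odd-parity terms could pair up into even integrable contributions and the reduction to the single term $x_j$ would fail.
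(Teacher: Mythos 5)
Your proof is correct and follows essentially the same route as the paper: split into the regimes $|x|<t$ and $|x|\geq t$, reduce the $y$-integral to a radial one after observing that only the $x_j=x_{j_1}\cdots x_{j_k}$ term of $P_j(x+iy)$ survives, then pass to polar coordinates $y=|x|u\theta$ and invoke \eqref{eq:Bform}. The only cosmetic difference is that the paper states the key reduction as the spherical identity $\int_{S^{d-1}}P_j(x+ir\omega)\,d\omega=P_j(x)$ without proof, while you justify the same fact via the Cartesian expansion and a parity argument; your observation that distinctness of the indices $j_1,\dots,j_k$ is precisely what forces all non-constant terms to be odd in some $y$-coordinate is accurate and is exactly what underlies the paper's version as well.
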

\begin{proof}
	To justify \eqref{eq:iat} consider two cases: $|x|\ge t$ and $|x|<t.$ In the first case, integrating in polar coordinates in $\R^d$ and noting that $\int_{S^{d-1}} P_j(x+ir\omega)\,d\omega=P_j(x)$   
	\begin{align*}
		&\int_{\R^d} \tK_j^t(x+iy) \, dy = \int_{y\in \R^d\colon |x+iy|\ge t} \tg_k\frac{P_j(x+iy)}{ \abs{x+iy}^{2d+k}}  dy=\int_{\R^d}  \tg_k\frac{P_j(x+iy)}{ \abs{x+iy}^{2d+k}}\,dy\\
		&=\widetilde{\gamma}_k S_{d-1}  P_j(x) \int_{0}^\infty \frac{r^{d-1}}{\left( |x|^2+r^2 \right)^{d+k/2}} \, dr=\widetilde{\gamma}_k S_{d-1} \frac{P_j(x)}{\abs{x}^{d+k}} \int_{0}^\infty \frac{r^{d-1}}{\left( 1+r^2 \right)^{d+k/2}} \, dr \\
		& =K_j(x)=K_j^t(x) = \mK_j^t(x).
	\end{align*}
	In the fourth equality above we used change of the variables $r\to r|x|$ and then we used \eqref{eq:Kjred}.
	Similarly, in the second case $|x|<t$ we obtain  
	\begin{align*}
		&\int_{y\in \R^d\colon |x+iy|\ge t} \tg_k\frac{P_j(x+iy)}{ \abs{x+iy}^{2d+k}}  dy=\widetilde{\gamma}_k S_{d-1}  P_j(x) \int_{\sqrt{t^2-\abs{x}^2}}^\infty \frac{r^{d-1}}{\left( |x|^2+r^2 \right)^{d+k/2}} \, dr\\
		& = \mK_j^t(x),
	\end{align*}
where in the second equality we used the change of variable $r\to r|x|.$ Thus \eqref{eq:iat} is justified.
\end{proof}

We present only the proof of  \thref{thm2tres}. The proof of \thref{thm1tres} is similar. We merely need a technically more involved duality argument instead of \eqref{eq:thm2tres1'} below and an application of \thref{thm1c''} instead of \thref{thm2c''}.

\begin{proof}[Proof of \thref{thm2tres}]
	By Lebesgue's monotone convergence theorem we may restrict the supremum to a finite set of positive numbers $\{t_1,\ldots,t_N\},$ as long as our final estimate is independent of $N.$ Further, a density argument shows that it suffices to consider $f\in \mathcal S(\R^d).$   
	
	For $F\colon \CC^d\to \CC$ and $u>0$ we let $(\delta_u F)(x+iy)=F(x+iu y)$ and define
	\[
	\tR^{t,u}(F)(x+iy):=(\delta_{u^{-1}}\circ \tR^t\circ\delta_u) (F)(x+iy)=\tR^t(\delta_u F)(x+iu^{-1}y).
	\]
	Using \thref{thm2c''} it is straightforward to see that
	\begin{equation*}
			\norm{ \sup_{t_1,\ldots, t_N} |\tR^{t,u} F|}_{L^p(\CC^d)} \leqslant B(p,k) \norm{  F}_{L^p(\CC^d)}.
	\end{equation*}
 Note that by duality between the spaces $L^p(\CC^d;\ell^{\infty}(\{t_1,...,t_N\}))$ and $L^q(\CC^d;\ell^{1}(\{t_1,...,t_N\}))$ the above inequality can be rewritten in the following equivalent form
  \begin{align}
  		\label{eq:thm2tres1'}
	\left|\sum_{n=1}^N \langle \tR^{t_n,u}(F) ,\,G_n\rangle_{L^2(\CC^d)}\right|	\leq B(p,k) \|F\|_{L^p(\CC^d)} \norm{\sum_{n=1}^N \abs{G_{n}}}_{L^q(\CC^d)},
\end{align} 
where $G_n\in L^q(\CC^d)$ $n=1,\ldots,N.$ 

Let $\eta\in \mathcal S(\R^d)$ be a fixed function such that $\|\eta\|_{L^p(\R^d)}=1$ and take $f\in S(\R^d)$. 
Denoting $$F(x+iy):=(f\otimes \eta)(x,y)=f(x)\cdot \eta(y),\qquad x,y\in \R^d$$  we claim that
\begin{equation}
	\label{eq:thm2tresclaim}
	\lim_{u\to 0^+} \langle \tR^{t,u} F,\,G\rangle_{L^2(\CC^d)}=\langle \mR^t(f)\otimes \eta,\,G\rangle_{L^2(\CC^d)}
\end{equation}
for any function $G\in \mathcal S(\CC^d)$  and all $t>0.$

  Assume for a moment that the claim  holds. Fix $\varepsilon\in (0,1)$ and let $\psi\in \mathcal S(\R^d)$  be a function of $L^q(\R^d)$ norm $1$ and such that $|\langle \eta, \psi \rangle_{L^2(\R^d)}|\ge (1-\varepsilon)$. Take $g_n\in \mathcal S(\R^d),$ $n=1,\ldots,N.$ Then, substituting $F=f\otimes \eta$ and $G_n=g_n\otimes \psi$ in \eqref{eq:thm2tres1'} we have
  \begin{align*}
  	\left|\sum_{n=1}^N \langle \tR^{t_n,u}(f\otimes \eta) ,\,g_n\otimes \psi \rangle_{L^2(\CC^d)}\right|	\leq B(p,k) \|f\otimes \eta\|_{L^p(\CC^d)} \norm{\sum_{n=1}^N \abs{g_{n}\otimes \psi}}_{L^q(\CC^d)}.
  \end{align*}   
At this point the claim \eqref{eq:thm2tresclaim} implies
  \begin{align*}
	\left|\sum_{n=1}^N \langle \mR^{t_n}(f)   ,\,g_n \rangle_{L^2(\R^d)}\right||\langle\eta,\psi\rangle_{L^2(\R^d)} |	\leq B(p,k) \norm{f}_{L^p(\R^d)} \norm{\sum_{n=1}^N \abs{g_{n}}}_{L^q(\R^d)}.
\end{align*}  
Now, using duality between the spaces $L^p(\R^d;\ell^{\infty}(\{t_1,...,t_N\}))$ and $L^q(\R^d;\ell^{1}(\{t_1,...,t_N\}))$ together with the density of Schwartz function in $L^q(\R^d)$ we conclude that
	\begin{equation*}
(1-\varepsilon)	\norm{ \sup_{t_1,\ldots, t_N} |\mR^{t_n} f|}_{L^p(\R^d)} \leqslant B(p,k) \norm{  f}_{L^p(\R^d)}.
\end{equation*}
Since $\varepsilon\in (0,1)$ was arbitrary this completes the proof of \thref{thm2tres}.

It remains to verify the claim \eqref{eq:thm2tresclaim}. Since $\tR^t=\sum_{j\in I}\tR_j^t \tR_j$ it is easy to see  that 
\[
\tR^{t,u}F=\sum_{j\in I} \tR_j^{t,u}\tR_j^uF,
\] 
where, for $F=f\otimes \eta,$ we denote 
\begin{align*}
\tR_j^{t,u}(F)(x+iy)=\tR_j^t(\delta_u F)(x+iu^{-1}y),\qquad \tR_j^{u}(F)(x+iy)=\tR_j(\delta_u F)(x+iu^{-1}y).
\end{align*}
Thus, it is enough to justify that
\begin{equation}
	\label{eq:thm2tresclaim'}
	\lim_{u\to 0^+} \langle \tR_j^{t,u}\tR_j^uF,\,G\rangle_{L^2(\CC^d)}  =\langle (\mR^t_j R_j f)\otimes \eta ,\,G\rangle_{L^2(\CC^d)} 
\end{equation}
for $j\in I,$ $t>0,$ and $G\in \mathcal S(\CC^d).$

Fix $j\in I$ and $t>0$ and denote by $m^t$ and $m$ the multiplier symbols on $\CC^d$ corresponding to the operators $\tR_j^t$ and  $\tR_j,$ respectively. 
Then  $\delta_u (m^t)$ and $\delta_u (m)$  are the multiplier symbols corresponding to the operators $\tR_j^{t,u}$ and $\tR_j^u,$ respectively. Thus, identifying $\CC^d$ with $\R^{2d}$, taking the Fourier transform on $\R^{2d}$, and using Plancherel's theorem we see that
\begin{equation}
	\label{eq:thm2tresFtra1}
\langle \tR_j^{t,u}\tR_j^uF,\,G\rangle_{L^2(\CC^d)}=
\langle \delta_u (m) \delta_u (m^t) \mF[F], \mF[G]\rangle_{L^2(\CC^d)}.
\end{equation}
By formula \eqref{eq:m} (applied on $\R^{2d}$) and definitions \eqref{eq:KPt}, \eqref{eq:tR} for $P_j(z):=z_j=z_{j_1}\cdots z_{j_k}$ we have
\[
\delta_u (m)(\xi,\tau)=(-i)^k\frac{P_j(\xi+iu\tau)}{|\xi+iu\tau|^k},
\] 
for $\xi,\tau\in \R^d.$ 
Hence, for $\xi\neq0$ and $\tau\in\R^d$ it holds  $\lim_{u\to 0^+}m(\xi,u\tau)=m(\xi,0)=(-i)^k\frac{P_j(\xi)}{|\xi|^k}.$ Another application of \eqref{eq:m} (this time on $\R^d$) shows that the function $m_0(\xi):=m(\xi,0)$ is the multiplier symbol of the operator $R_j$ acting on $L^2(\R^d).$ 

Since the operators $\tR_j^t$ and  $\tR_j$ are both bounded on $L^2(\CC^d)$ the functions $\delta_u(m)$ and $\delta_u(m^t)$  are in $L^{\infty}(\CC^d),$ uniformly in $u>0.$ Thus, coming back to \eqref{eq:thm2tresFtra1} and using Lebesgue's dominated convergence theorem we see that
\begin{align*}
	\lim_{u\to 0^+}\langle \tR_j^{t,u}\tR_j^uF,\,G\rangle_{L^2(\CC^d)}=	\lim_{u\to 0^+}\langle  \delta_u (m^t)\,\mF[F],\, \overline{m_0}\, \mF[G]\rangle_{L^2(\CC^d)},
\end{align*}
provided the limit on the right hand side exists. By definition of $m_0$ applying again Plancherel's theorem we obtain
\begin{equation}
		\label{eq:thm2tres3}
	\lim_{u\to 0^+}\langle \tR_j^{t,u}\tR_j^uF,\,G\rangle_{L^2(\CC^d)}=	\lim_{u\to 0^+}\langle \tR_j^{t,u} F,\, (R_j\otimes I)^* G\rangle_{L^2(\CC^d)},
\end{equation}
provided the limit on the right hand side exists. In the above formula $R_j\otimes I$ denotes the operator $R_j$ acting only on the  $\R^d$ coordinates of a function defined on $\CC^d$ and the adjoint is taken with respect to the inner product on $L^2(\CC^d).$ 
Now, if we justify that
\begin{equation}
	\label{eq:thm2tres4}
		\lim_{u\to 0^+}\langle \tR_j^{t,u} F,\, (R_j\otimes I)^* G\rangle_{L^2(\CC^d)}=\langle \mR_j^t(f)\otimes \eta,\, (R_j\otimes I)^* G\rangle_{L^2(\CC^d)}
\end{equation}
and use the formula
\[
\langle \mR_j^t(f)\otimes \eta,\, (R_j\otimes I)^* G\rangle_{L^2(\CC^d)}=\langle (\mR^t_j R_j f)\otimes \eta ,\,G\rangle_{L^2(\CC^d)}
\]
together with
\eqref{eq:thm2tres3}, then we will complete the proof of the claim \eqref{eq:thm2tresclaim'}.

Since the operators $\tR_j^{t,u}$ are uniformly bounded on $L^2(\CC^d)$ with respect to $u>0$ to prove \eqref{eq:thm2tres4} it suffices to show that
\begin{equation}
	\label{eq:thm2tres5}
	\lim_{u\to 0^+}\langle \tR_j^{t,u} F,\tilde{G}\,\rangle_{L^2(\CC^d)}=\langle \mR_j^t(f)\otimes \eta,\, \tilde{G}\rangle_{L^2(\CC^d)},
\end{equation}
where $\tilde{G}\in \mS(\CC^d).$ For $z=x+iy,$ $x,y\in \R^d,$  we have 
	\begin{equation}
			\label{eq:tRjtfu}
		\begin{split}
	&\tR_j^{t,u}(F)(z)=\tR_j^{t,u}(f\otimes \eta)(z) = u^{-d} \delta_{u^{-1}}(\tK_j^t)*(f\otimes \eta)(z) \\
	&=  \int_{\R^d}f(x-x')\int_{y'\in \R^d\colon |x'+iu^{-1}y'|\ge t} \tg_ku^{-d} \frac{P_j(x'+iu^{-1}y')}{ \abs{x'+iu^{-1}y'}^{2d+k}} \eta(y-y') \, dy'\, dx'\\
	&=\int_{\R^d}\int_{y'\in \R^d\colon |x'+iy'|\ge t}f(x-x')\, \tg_k\frac{P_j(x'+iy')}{ \abs{x'+iy'}^{2d+k}} \eta(y-uy') \, dy'\, dx'
	\end{split}
	\end{equation}
Moreover, a computation shows that for fixed $x\in \R^d$ and $t>0$  it holds
\begin{equation}
	\label{eq:thm2tresL1}
	f(x-x')\tg_k\frac{P_j(x'+iy')}{ \abs{x'+iy'}^{2d+k}}\ind{|x'+iy'|\ge t}\in L^1(\CC^d).
\end{equation}
Hence, taking the limit as $u\to 0^+$ in \eqref{eq:tRjtfu} and using Lebesgue's dominated convergence theorem followed by  \thref{lem:inaux}
 we obtain 
\begin{equation}
	\label{eq:thm2tres6}
	\begin{split}
	&\lim_{u\to 0^+}\tR_j^{t,u}(F)(z) = \eta(y)\int_{\R^d}f(x-x')\int_{y'\in \R^d\colon |x'+iy'|\ge t}  \tg_k\frac{P_j(x'+iy')}{ \abs{x'+iy'}^{2d+k}}\,dy'\, dx'\\
	&=\eta(y)\int_{\R^d}f(x-x')\mK_j^t(x')\, dx'=\eta(y)\mR_j^tf(x)=(\mR_j^t(f)\otimes \eta)(x,y),
	\end{split}
\end{equation}
for $x,y\in \R^d.$ Moreover, another application of \eqref{eq:thm2tresL1} shows that $\tR_j^{t,u}(F) \in L^{\infty}(\CC^d),$ uniformly in $u>0.$
 Now, since $\tilde{G}\in \mS(\CC^d)$ using
again Lebesgue's dominated convergence theorem followed by \eqref{eq:thm2tres6} we reach
\begin{equation*}
	\lim_{u\to 0^+}\langle \tR_j^{t,u} F,\tilde{G}\,\rangle_{L^2(\CC^d)}=	\langle 	\lim_{u\to 0^+}\tR_j^{t,u} F,\tilde{G}\,\rangle_{L^2(\CC^d)}=
	\langle \mR_j^t(f)\otimes \eta,\, \tilde{G}\rangle_{L^2(\CC^d)},
\end{equation*}
This justifies \eqref{eq:thm2tres5}, hence, also the claim \eqref{eq:thm2tresclaim'}. The proof of \thref{thm2tres} is thus completed.

\end{proof}

\subsection{Bounding the difference between $R^t$ and $\mR^t$}

Define the difference kernels on $\R^d$ by
\begin{equation}
	\label{eq:Djt}
	E_j^t(x):=K_j^t(x)-\mK_j^t(x).
\end{equation}
Recall that by definitions \eqref{eq:KP} of $K_j^t$ and \eqref{eq:mRjt} of $\mK_j^t$ we have $E_j^t(x)=-\mK_j^t(x)$ if $|x|<t$ and  $E_j^t(x)=0$ if $|x|\ge t.$ We let $D_j$ be the operator on $L^p(\R)$ given by $D_j^tf=f*E_j^t$ and define
\begin{equation*}
	D^tf=\sum_{j\in I} D_j^t R_j f,\qquad D^*f=\sup_{t\in \mathbb{Q}_+}|D^tf|. 
\end{equation*}
Clearly,
\[R^t=\mR^t+D^t,\]
so using \thref{thm1tres,thm2tres} we reduce \thref{thm1'',thm2''} to the following two statements. 

\begin{theorem} \thlabel{thm1quat}
	Fix $k\in \N.$ For each $p\in (1,\infty)$  there is a constant $A(p,k)$ independent of the dimension $d$ and such that for any $S\in\N$  we have
	\begin{equation*} 
		\norm{\left(\sum_{s=1}^S |D^* f_s|^2\right)^{1/2}}_{L^p(\R^d)} \leqslant A(p,k) \norm{ \left(\sum_{s=1}^S | f_s|^2\right)^{1/2}}_{L^p(\R^d)},
	\end{equation*}
	whenever $f_1,\ldots,f_S \in L^p(\R^d).$ Moreover, $A(p,k)$ satisfies $A(p,k) \lesssim_k (p^*)^{5/2+k/2}$.
\end{theorem}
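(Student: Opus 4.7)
My plan is to exploit the explicit form of the difference kernel $E_j^t = K_j^t - \mK_j^t$. Using \eqref{eq:mRjt} together with the definition of $K_j^t$, a direct calculation gives
\[
E_j^t(y) = -\phi(|y|/t)\,K_j(y)\,\ind{\{|y|<t\}},
\]
where $\phi\colon [0,1]\to[0,1]$ is the increasing radial profile with $\phi(0)=0$ and $\phi(1)=1$, defined explicitly by $\phi(s) = \tg_k\, S_{d-1}\, \gamma_k^{-1}\int_{\sqrt{1/s^2-1}}^{\infty} r^{d-1}(1+r^2)^{-d-k/2}\,dr$. A routine asymptotic analysis as $s\to 0^+$ yields $\phi(s)\lesssim_k s^k$, and $E_j^t$ is an $L^1$-dilation of a fixed kernel $E_j^1$.

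The central quantitative input is the dimension-free estimate
\[
\|E_j^t\|_{L^1(\R^d)}\lesssim_k 1, \qquad j\in I,\ t>0,\ d\geq k.
\]
In polar coordinates,
\[
\|E_j^t\|_1 = \gamma_k\Bigl(\int_0^1 \phi(s)/s\,ds\Bigr)\int_{S^{d-1}}|\omega_{j_1}\cdots\omega_{j_k}|\,d\sigma(\omega),
\]
and each factor admits a controlled bound: (a) the $s$-integral is $O_k(1)$ from the asymptotic $\phi(s)\lesssim_k s^k$; (b) the spherical integral is $\lesssim S_{d-1}\cdot d^{-k/2}$ via Cauchy--Schwarz combined with \eqref{eq:lemA1'}; (c) $\gamma_k\,S_{d-1}\lesssim_k d^{k/2}$ by Stirling applied to \eqref{eq:gk}. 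Multiplying the three estimates yields the claim.

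Building on this, I plan to derive the vector-valued maximal estimate by combining three ingredients via duality and Cauchy--Schwarz in the sum over $j\in I$: (i) the pointwise bound $\sup_{t\in\Q_+}|E_j^t\ast g(x)|\lesssim_k \mathcal{M}_{\rm HL}g(x)$ for the scalar maximal operator on the kernel side; (ii) the dimension-free $L^p$ bound $\|\mathcal{M}_{\rm HL}\|_{p\to p}\lesssim p^*$; and (iii) the real-variable vector-valued Riesz estimate $\|(\sum_{j\in I}|R_j f|^2)^{1/2}\|_p\lesssim_k (p^*)^{3/2+k/2}\|f\|_p$ obtained from \thref{pro:tRve} restricted to $\R^d$ via the method of Section~\ref{sec:re}. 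Pulling out the $\ell^2_s$ norm by Minkowski takes care of the vector-valued structure over $f_1,\dots,f_S$. Multiplying the constants in (i)--(iii) yields $A(p,k)\lesssim_k (p^*)^{5/2+k/2}$.

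The main obstacle is ingredient (i). The naive radial majorant $\gamma_k\,\phi(|y|/t)\,|y|^{-d}\,\ind{|y|<t}$ of $|E_j^t|$ has $L^1$ norm of order $d^{k/2}$, which is not dimension-free, so the standard radial-majorant argument for maximal functions cannot be applied directly. The resolution must exploit the angular cancellation of $P_j\in\mH_k$ in a finer way, for instance by splitting $E_j^t$ into pieces each admitting a dimension-free radial majorant, or---more in line with the overall methodology of the paper---by extending $E_j^t$ to $\CC^d$ and establishing the analogue of \thref{thm2c''} for the complex version of $D^t$, then restricting via the technique of Section~\ref{sec:re}. The latter route seems better suited to producing the explicit power of $p^*$ in the final constant.
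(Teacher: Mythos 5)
Your derivation of the dimension-free bound $\|E_j^t\|_{L^1(\R^d)}\lesssim_k 1$ is essentially correct (the intermediate assertion $\phi(s)\lesssim_k s^k$ is not quite the right statement uniformly in $d$ --- $\phi$ actually makes a sharp transition near $s\approx 1/\sqrt 2$ and decays like $C^d s^{d+k}$ near $0$ --- but the conclusion $\int_0^1\phi(s)/s\,ds = O_k(1)$ is true). However, this bound alone is not the ingredient the paper uses, and the architecture (i)--(iii) that you build on it has two serious gaps.

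First, as you yourself observe, the scalar pointwise bound (i) is unavailable: a radial majorant of $|E_j^t|$ has $L^1$ norm $\sim d^{k/2}$, so there is no dimension-free domination of $\sup_t|E_j^t*g|$ by the Hardy--Littlewood maximal function of $g$. Your suggested fix --- extending $E_j^t$ to $\CC^d$ and restricting --- is circular: $E_j^t=K_j^t-\mK_j^t$ is precisely the error produced by the $\CC^d\to\R^d$ restriction of $\tK_j^t$, and there is no meaningful $\CC^d$-lift of that error. Second, even granting (i), the plan to sum over $j\in I$ via Cauchy--Schwarz and duality would lose a factor $|I|^{1/2}\sim d^{k/2}$, since $D^tf=\sum_{j\in I}E_j^t*R_jf$ involves $\sim d^k$ summands and each term is estimated separately; nothing in (i)--(iii) exploits the joint almost-orthogonal structure of the family $\{E_j^t\}_{j\in I}$.

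The paper's resolution sidesteps both problems at once: since $E_j^t$ is an $L^1$ kernel (no principal-value issue, no parity issue), the \emph{real} method of rotations over $S^{d-1}$ applies directly. Equation \eqref{eq:Djtrot}, iterated over $j\in I$, yields \eqref{eq:DtfHx},
\[
D^t f(x)= -\frac{2\Gamma\left( \frac{k+d}{2}\right)}{\Gamma\left( \frac{k}{2}\right)\Gamma\left( \frac{d}{2}\right)}\int_{S^{d-1}} \mH^t_{\omega}\Bigl[\sum_{j\in I} \omega_j R_j f \Bigr](x)\, d\omega,
\]
after which the key input is \thref{pro:mhzt}: the one-dimensional bound \eqref{eq:mHmM} shows that the directional operator $\mH_\omega^t$ is dominated by the directional Hardy--Littlewood average $\mM_\omega^t$ with a dimension-free constant. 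The angular factor $\omega_j$ living inside the directional maximal operator is exactly what encodes the cancellation you were missing in (i), and the $\ell^2$ structure over $j$ is recovered via \cite[Lemme, p.\ 195]{duo_rubio} and the vector-valued Riesz estimate \thref{pro:Rve}, exactly as in the proof of \thref{thm1c''}. The essential point you did not identify is that the dimension-free estimate must be extracted \emph{after} separating the radial and angular parts, i.e.\ as a bound on the one-dimensional directional profile $\tfrac{\tg_k}{\gamma_k}S_{d-1}I^t(r)/r \lesssim 1/t$ rather than as an $L^1(\R^d)$ bound on the full $d$-dimensional kernel.
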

\begin{theorem} \thlabel{thm2quat}
	Fix $k\in \N.$ For each $p\in (1,\infty)$  there is a constant $ B(p,k)$ independent of the dimension $d$ and such that
	\begin{equation*} 
		\norm{ D^* f}_{L^p(\R^d)} \leqslant B(p,k) \norm{  f}_{L^p(\R^d)},
	\end{equation*}
	whenever $f\in L^p(\R^d).$ Moreover, $B(p,k)$ satisfies $B(p,k) \lesssim_k (p^*)^{2+k/2}$.
\end{theorem}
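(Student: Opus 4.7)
My plan is to factor $D^t = B^t \circ A_k$, where $A_k := \sum_{j \in I} R_j^2$ absorbs the spherical-harmonic complexity and $B^t$ is a radial Fourier multiplier carrying the $t$-dependence. To exhibit this factorization, note that $E_j^t(y) = -P_j(y)\psi^t(|y|)\ind{|y|<t}(y)$, with $P_j(y) = y_{j_1}\cdots y_{j_k}$ a solid spherical harmonic of degree $k$ (since $j \in I$ has distinct entries); here $\psi^t$ is the radial profile implicit in \eqref{eq:mRjt}. By Bochner's formula for the Fourier transform of a radial function times a solid spherical harmonic, $\widehat{E_j^t}(\xi) = c_k P_j(\xi)\Psi^t(|\xi|)$ for a constant $c_k$ and a radial function $\Psi^t$. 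Multiplying by the Riesz symbol $(-i)^k P_j(\xi)/|\xi|^k$ and summing over $j \in I$ gives
\[
	d^t(\xi) = a(\xi)\cdot b^t(|\xi|), \quad\text{where } a(\xi) = \pm\sum_{j\in I}\frac{P_j(\xi)^2}{|\xi|^{2k}}, \quad b^t(\rho) = \rho^k\Psi^t(\rho).
\]
Here $a$ is the degree-zero symbol of $\pm A_k$ and $b^t$ is radial, defining the radial convolution operator $B^t$. Thus $D^t f = B^t(A_k f)$, hence $D^* f = B^*(A_k f)$, and $\|D^* f\|_p \leq \|B^*\|_{p\to p}\|A_k f\|_p$.

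The factor $A_k$ is controlled by duality and Cauchy--Schwarz in $j$:
\[
	\|A_k f\|_p \leq \bigl\|(\textstyle\sum_{j\in I}|R_j f|^2)^{1/2}\bigr\|_p \cdot \sup_{\|g\|_q=1}\bigl\|(\textstyle\sum_{j\in I}|R_j g|^2)^{1/2}\bigr\|_q.
\]
Each factor is dimension-freely bounded via the real analog of \eqref{eq:tRvs} (whose proof, mirroring \thref{pro:tRve}, invokes only \cite[Lemme p.~195]{duo_rubio}), with constants $\lesssim_k p^*q^{k/2}$ and $\lesssim_k q^*p^{k/2}$, respectively. Since $p^* = q^* = \max(p,q)$ and $pq \leq 2p^*$, the product is $\lesssim_k(p^*)^{2+k/2}$, so $\|A_k f\|_p \lesssim_k (p^*)^{2+k/2}\|f\|_p$. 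This already matches the exponent claimed in \thref{thm2quat}.

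The main obstacle is a dimension-free bound $\|B^*\|_{p\to p} \lesssim_k 1$ with a constant independent of $p$ (up to a $k$-factor). I would write $B^t = M^t_k - \tM^t_k$, where $\tM^t_k$ is a radial operator analogous to $M^t_k$ but obtained by averaging $(\mR^t)_U$ over $SO(d)$ as in \thref{pro:av}. Analyzing the radial kernel of $B^t$---compactly supported in $\overline{B(0,t)}$ for $k$ even via Paley--Wiener (as $b^t$ has that property by \cite{mov1} and $\tM^t_k$ should inherit it), and handled separately for $k$ odd via fractional-Laplacian considerations---one would establish a pointwise domination of $|B^t f(x)|$ by a $k$-dependent constant times the Hardy--Littlewood maximal function of $f$, then invoke Stein's dimension-free $L^p$ bound. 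The chief difficulty is that the kernel of $M_k^t$ alone has $L^1$-norm growing super-exponentially with $d$, since the normalization $\widetilde\gamma_k$ is a $2d$-dimensional constant; the dimension-freeness must therefore emerge from fine cancellation within the difference $M_k^t - \tM^t_k$, rather than from a crude size estimate. The vector-valued \thref{thm1quat} is obtained analogously, with Khintchine's inequalities contributing an additional factor $(p^*)^{1/2}$ and yielding the exponent $5/2+k/2$.
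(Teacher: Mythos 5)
Your factorization $D^t=B^tA_k$ with $A_k=\sum_{j\in I}R_j^2$ and a radial multiplier $B^t$ is a correct multiplier identity (via the Hecke--Bochner theorem), and the duality bound $\|A_kf\|_p\lesssim_k(p^*)^{2+k/2}\|f\|_p$ from two applications of the real analogue \eqref{eq:Rvs} of \thref{pro:tRve} is valid. But the central step---a dimension-free bound on the maximal radial operator $B^*$---is left entirely unproved. You call it ``the main obstacle'' and offer only speculative strategies (Paley--Wiener for even $k$, fractional Laplacians for odd $k$, cancellation inside $M_k^t-\tM_k^t$); no pointwise comparison $|B^tf|\lesssim_k \mM f$ is established, and nothing in the setup gives sign or size information on the kernel of $B^t$, which is defined only implicitly as the inverse Fourier transform of $b^t(\rho)=\rho^k\Psi^t(\rho)$. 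There is also a quantitative inconsistency: you indicate that the $A_k$ bound ``already matches the exponent'', which presupposes $\|B^*\|_{p\to p}\lesssim_k1$ independently of $p$; yet the very mechanism you invoke for $B^*$ (Hardy--Littlewood domination followed by Stein's dimension-free bound) would itself yield $\|B^*\|_{p\to p}\lesssim_k p^*$, making the total $(p^*)^{3+k/2}$, one power of $p^*$ worse than the theorem. And for $k=1$ the factorization is vacuous ($A_1=-\Id$, so $B^t=-D^t$), so a $p$-uniform bound on $B^*$ would be a $p$-uniform bound on $D^*$ itself---considerably stronger than what is claimed and not what one should expect.

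The paper's proof avoids introducing $B^t$ at all. Via the real method of rotations it writes $D^tf$ as a spherical average over $S^{d-1}$ of one-dimensional directional averaging operators $\mH_\omega^t$ applied to $\sum_{j\in I}\omega_jR_jf$ (identity \eqref{eq:DtfHx}); the only $t$-supremum to control is then $\mH_\omega^*$. Proposition \thref{pro:mhzt} gives $\|\mH_\omega^*\|_{p\to p}\lesssim p^*$ dimension-freely from the elementary pointwise comparison $\mH_\omega^tf\lesssim\mM_\omega^tf$ in \eqref{eq:mHmM} (a short Stirling computation). Then \cite[Lemme, p.~195]{duo_rubio} contributes $p^{k/2}$ and a single application of \eqref{eq:Rvs} contributes $p^*q^{k/2}$, giving $(p^*)^2(pq)^{k/2}\lesssim_k(p^*)^{2+k/2}$. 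The structural difference is decisive: the paper pushes the $t$-dependence onto a one-dimensional directional operator whose dimension-free control is elementary, rather than onto a $d$-dimensional radial multiplier $B^t$ whose kernel behavior is opaque.
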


The proofs of the above two theorems will follow the scheme of the proofs of \thref{thm1c'',thm2c''}. The main difference lies in the application of the method of rotations. It has to be appropriate for the operator $D^t.$ For $t>0$ we let $I^t$ be the function on $(0,\infty)$ given by  
\begin{equation}
	\label{eq:Ir}
	I^t(r)=\ind{(0,t)}(r)\int_{\sqrt{\frac{t^2}{r^2}-1}}^\infty \frac{s^{d-1}}{\left( 1+s^2 \right)^{d+k/2}} \, ds,\qquad r>0.
\end{equation}

Using the definitions \eqref{eq:mRjt} and  \eqref{eq:Djt} and integrating in polar coordinates in $\R^d$ we obtain
\begin{equation}
	\label{eq:Djtrot}
	\begin{split}
	-D_j^t f(x) &=  \int_{\R^d} \widetilde{\gamma}_k S_{d-1} \frac{y_j}{\abs{y}^{d+k}} I^t(|y|) f(x-y) \, dy \\
	&= \widetilde{\gamma}_k S_{d-1}^2 \int_0^t \int_{S^{d-1}} \frac{\omega_j}{r} I^t(r) f(x-r\omega) \, d\omega \, dr \\
	&= \gamma_k S_{d-1} \int_{S^{d-1}}\omega_j \mH_{\omega}^t f(x) \, d\omega=\frac{2\Gamma\left( \frac{k+d}{2}\right)}{\Gamma\left( \frac{k}{2}\right)\Gamma\left( \frac{d}{2}\right)}\int_{S^{d-1}}\omega_j \mH_{\omega}^t f(x) \, d\omega,
	\end{split}
\end{equation}
where
\begin{equation}
	\label{eq:mHomt}
\mH_\omega^tf(x) =  \frac{\widetilde{\gamma_k}}{\gamma_k} S_{d-1}\int_0^t I^t(r) \frac{f(x-r\omega)}{r} \, dr.
\end{equation}
Let now $\mH_{\omega}^*f(x)=\sup_{t\in \mathbb{Q}_+}|\mH_\omega^tf(x)|.$
The next proposition serves as a replacement for \thref{pro:hzt}.
\begin{pro}
	\thlabel{pro:mhzt}
	For each $1<p<\infty$ we have
	\begin{equation}
		\label{eq:mmhztv}
		\norm{\left(\sum_{s=1}^S |\mH_{\omega}^* f_s|^2\right)^{1/2}}_{L^p(\R^d)} \lesssim  p^* \norm{ \left(\sum_{s=1}^S | f_s|^2\right)^{1/2}}_{L^p(\R^d)}
	\end{equation}	
	uniformly in $\omega\in S^{d-1}$ and the dimension $d.$  
\end{pro}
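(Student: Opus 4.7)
The plan is to establish the pointwise domination
\[
\mH_\omega^* f(x) \lesssim_k M_\omega f(x),
\]
where $M_\omega f(x) := \sup_{\tau>0}\frac{1}{\tau}\int_0^\tau |f(x-r\omega)|\,dr$ denotes the directional Hardy--Littlewood maximal function along $\omega$. Once this bound is in hand, inequality \eqref{eq:mmhztv} will reduce, via a rotation sending $\omega$ to $e_1$ followed by Fubini over hyperplanes orthogonal to $e_1$, to the one-dimensional vector-valued Fefferman--Stein inequality for the Hardy--Littlewood maximal operator on $\R$. The latter is classical and has the required $L^p(\ell^2)$ constant of order $p^*$.

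To produce the pointwise bound, my first step is to rewrite $I^t(r)$ from \eqref{eq:Ir} by means of the substitution $s = \tan\theta$, which puts it in the compact form
\[
I^t(r) = \int_{\arccos(r/t)}^{\pi/2} \sin^{d-1}\theta\,\cos^{d+k-1}\theta\, d\theta, \qquad 0<r<t.
\]
Inserting this into \eqref{eq:mHomt} and swapping the order of integration (using that $\arccos(r/t)<\theta$ is equivalent to $r>t\cos\theta$) produces
\[
\mH_\omega^t f(x) = \frac{\tg_k}{\gamma_k}\, S_{d-1}\int_0^{\pi/2}\sin^{d-1}\theta\,\cos^{d+k-1}\theta \int_{t\cos\theta}^t \frac{f(x-r\omega)}{r}\,dr\,d\theta.
\]
I would then bound the inner integral by $M_\omega f(x)/\cos\theta$ via the crude inequality $1/r \leq 1/(t\cos\theta)$ on $(t\cos\theta,t)$, and compute the surviving $\theta$-integral. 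By the Beta function identity this collapses to a ratio of Gamma functions
\[
\frac{\Gamma(d+k/2)\,\Gamma((d+k-1)/2)}{\Gamma((d+k)/2)\,\Gamma(d+(k-1)/2)},
\]
which Stirling's asymptotic \eqref{StirFra} shows to be bounded (approaching $\sqrt{2}$ as $d \to \infty$), uniformly in $d$.

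The main obstacle, in my view, is verifying that the huge prefactor $\tg_k S_{d-1}/\gamma_k$ (which by \eqref{eq:gk}--\eqref{eq:Sd-1} behaves like $\Gamma(d+k/2)/\Gamma(d/2)$) is precisely offset by the smallness of the surviving Beta integral. The change of variables $s = \tan\theta$ is exactly the device that makes this cancellation transparent; direct bookkeeping in the $s$-variable appears to obscure it. Once the pointwise domination by $M_\omega$ is established, the remaining argument is routine: rotating $\omega$ to $e_1$ and applying Fubini reduces the vector-valued $L^p$ estimate for $M_\omega$ to the one-dimensional vector-valued Hardy--Littlewood inequality, which supplies the desired $p^*$ dependence of the constant in \eqref{eq:mmhztv}.
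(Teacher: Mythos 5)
Your proposal is correct and is essentially the same argument as in the paper: both reduce the claim, via a pointwise domination of $\mH_\omega^t$ by the directional Hardy--Littlewood averaging/maximal operator and the vector-valued Hardy--Littlewood inequality, to showing that a certain constant is bounded uniformly in $d$, and both arrive at the very same ratio $\Gamma(d+k/2)\,\Gamma((d+k-1)/2)\big/\big(\Gamma((d+k)/2)\,\Gamma(d+(k-1)/2)\big)$ controlled by Stirling. The only cosmetic difference is that you change variables $s=\tan\theta$ and swap the order of integration, whereas the paper uses the equivalent inequality $1/r\leqslant\sqrt{s^2+1}/t$ directly in the $s$-variable inside the definition of $I^t(r)$; contrary to your last remark, the $s$-variable bookkeeping causes no obscurity and lands on exactly the same Beta integral.
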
 
\begin{proof}
For $\omega\in S^{d-1}$ and $t>0$ we let
\[
\mM_{\omega}^tf(x)=\frac{1}{t}\int_{-t}^t |f(x-r\omega)|\,dr,\qquad \mM_{\omega}^*f(x)=\sup_{t>0}|\mM_{\omega}^tf(x)|,
\] 
be the directional Hardy--Littlewood averaging operator and the directional Hardy--Littlewood maximal function. Using Fubini's theorem and one-dimensional estimates for the Hardy--Littlewood maximal function, see e.g.\ \cite[Theorem 5.6.6]{grafakos}, we obtain
	\begin{equation*}
	\norm{\left(\sum_{s=1}^S |\mM_{\omega}^* f_s|^2\right)^{1/2}}_{L^p(\R^d)} \lesssim p^*\norm{ \left(\sum_{s=1}^S | f_s|^2\right)^{1/2}}_{L^p(\R^d)},
\end{equation*}	
uniformly in $\omega\in S^{d-1}.$ Thus, to prove \eqref{eq:mmhztv} it suffices to show  the pointwise estimate 
\[
\mH_\omega^tf(x)\lesssim \mM_{\omega}^tf(x)
\]
uniformly in $x\in \R^d,$ $\omega\in S_{d-1},$ with in-explicit constants independent of the dimension.  

This bound will follow if we justify that
\begin{equation}
	\label{eq:mHmM}
	\frac{\widetilde{\gamma_k}}{\gamma_k} S_{d-1} \frac{ I^t(r)}{r}  \lesssim \frac{1}{t},
\end{equation}
with the implicit constant being uniform in $t>0,$ $ 0\le r \leqslant t,$ and the dimension $d.$ Note that for  $s\ge (\frac{t^2}{r^2}-1)^{1/2}$ we have $\frac{1}{r} \leqslant \frac{\sqrt{s^2+1}}{t}.$ Hence, recalling \eqref{eq:Ir} and using \eqref{eq:Bform} we obtain
\begin{align*}
\frac{\widetilde{\gamma_k}}{\gamma_k} S_{d-1}	\frac{I^t(r)}{r} &\leqslant \frac{\widetilde{\gamma_k}}{\gamma_k} S_{d-1} \frac{1}{t}\int_{\sqrt{\frac{t^2}{r^2}-1}}^\infty \frac{s^{d-1}}{\left( 1+s^2 \right)^{d+(k-1)/2}} \, ds \\
	&\leqslant S_{d-1}\frac{\widetilde{\gamma_k}}{\gamma_k} \frac{1}{t}\int_{0}^\infty \frac{s^{d-1}}{\left( 1+s^2 \right)^{d+(k-1)/2}} \, ds = S_{d-1}\frac{\widetilde{\gamma_k}}{\gamma_k}\frac{\Gamma\left( \frac{d+k-1}{2} \right) \Gamma\left( \frac{d}{2} \right)}{2\Gamma\left( d+\frac{k-1}{2} \right)}\cdot \frac{1}{t}.
\end{align*}

Applying \eqref{eq:Sd-1} and \eqref{eq:gk} we reach
\begin{align*}
S_{d-1}\frac{\widetilde{\gamma_k}}{\gamma_k} \frac{I^t(r)}{r}&\leq \frac{2\pi^{d/2}}{\Gamma\left( \frac{d}{2} \right)}\frac{\Gamma(d+\frac{k}{2})}{\pi^{d/2}\Gamma\left( \frac{d+k}{2} \right)}\frac{\Gamma\left( \frac{d+k-1}{2} \right) \Gamma\left( \frac{d}{2} \right)}{2\Gamma\left( d+\frac{k-1}{2} \right)}\cdot \frac{1}{t}\\
&=\frac{\Gamma(d+\frac{k}{2})}{\Gamma\left( d+\frac{k-1}{2} \right)}\cdot \frac{\Gamma\left( \frac{d+k-1}{2} \right)}{\Gamma\left( \frac{d+k}{2} \right)}\cdot \frac{1}{t}.
\end{align*}
Since $k$ is fixed, using \eqref{StirFra} we conclude that
 \begin{align*}
S_{d-1}\frac{\widetilde{\gamma_k}}{\gamma_k} 	\frac{I^t(r)}{r}\lesssim \frac{\left(d+\frac{k-1}{2}\right)^{1/2}}{\left(\frac{d}{2}+\frac{k-1}{2}\right)^{1/2}} \cdot \frac{1}{t}\lesssim \frac{1}{t}.
 \end{align*}
Thus, we completed the proof of \eqref{eq:mHmM} and hence also the proof of \thref{pro:mhzt}.

\end{proof}

We will also need vector-valued estimates for $\{R_j(f_s)\},$ $j\in I,$ $s=1,\ldots,d.$ The following proposition can be deduced from \thref{pro:tRve} if we proceed along the lines of \cite[Section 4]{iwaniec_martin}.
\begin{pro}
	\thlabel{pro:Rve}
	For each $1<p<\infty$ we have 
	\begin{equation}
		\label{eq:Rve}
		\norm{\left(\sum_{s=1}^S \sum_{j\in I} |R_{j}f_s|^2\right)^{1/2}}_{L^p(\R^d)} \lesssim p^* p^{1/2} q^{\frac{k+1}{2}} \norm{ \left(\sum_{s=1}^S | f_s|^2\right)^{1/2}}_{L^p(\R^d)},
	\end{equation}	
	\begin{equation}
		\label{eq:Rvs}
		\norm{\left( \sum_{j\in I} |R_{j}f|^2\right)^{1/2}}_{L^p(\R^d)} \lesssim p^* q^{k/2} \norm{f}_{L^p(\R^d)},
	\end{equation}	
	uniformly in the dimension $d.$
\end{pro}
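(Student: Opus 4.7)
The plan is to deduce \eqref{eq:Rve} and \eqref{eq:Rvs} from their complex counterparts \eqref{eq:tRve} and \eqref{eq:tRvs} by a restriction procedure in the spirit of \cite[Chapter 4]{iwaniec_martin} and of the duality argument used in the proof of \thref{thm2tres}. The underlying observation is that, on $\CC^d\simeq\R^{2d}$, the multiplier $m_{\tR_j}(\xi,\tau)=(-i)^k P_j(\xi+i\tau)/|\xi+i\tau|^k$ of $\tR_j$ reduces at $\tau=0$ to the multiplier $(-i)^k P_j(\xi)/|\xi|^k$ of $R_j$ on $\R^d$, so restricting $\tR_j$ to tensor products $f\otimes\eta$ will, in an appropriate limit, produce $(R_j f)\otimes\eta$.

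First I would introduce the dilation $\delta_u F(x+iy)=F(x+iuy)$ for $u>0$ and the conjugated operators
\[
\tR_j^u F(x+iy):=\tR_j(\delta_u F)(x+iu^{-1}y),\qquad j\in I.
\]
Since $\delta_u$ is an invertible linear change of variables in the $y$-variables, the scaling factors $u^{\mp d/p}$ it produces on $L^p(\CC^d)$ cancel in $\delta_{u^{-1}}\circ\tR_j\circ\delta_u$, so the vector-valued bound \eqref{eq:tRve} transfers to the family $\{\tR_j^u\}_{j\in I}$ with constants independent of $u>0$. A short Fourier computation then identifies the multiplier of $\tR_j^u$ on $\R^{2d}$ as $m_j^u(\xi,\tau)=(-i)^k P_j(\xi+iu\tau)/|\xi+iu\tau|^k$, which is bounded by $1$ uniformly in $u,\xi,\tau$ (via the trivial bound $|P_j(\zeta)|\le|\zeta|^k$ available for $j\in I$) and converges pointwise a.e.\ as $u\to 0^+$ to $m_j^0(\xi,\tau)=(-i)^k P_j(\xi)/|\xi|^k$, the multiplier of $R_j\otimes I$ on $\CC^d$.

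Next I would apply the transferred inequality to tensor products. Fix $\eta\in\mS(\R^d)$ with $\|\eta\|_{L^p(\R^d)}=1$ and set $F_s=f_s\otimes\eta$ for Schwartz $f_s$ on $\R^d$. Using $\|(\sum_s|F_s|^2)^{1/2}\|_{L^p(\CC^d)}=\|(\sum_s|f_s|^2)^{1/2}\|_{L^p(\R^d)}$, the transferred \eqref{eq:tRve} reads
\[
\norm{\bigl(\sum_{s,j}|\tR_j^u F_s|^2\bigr)^{1/2}}_{L^p(\CC^d)}\lesssim_k p^*p^{1/2}q^{(k+1)/2}\norm{\bigl(\sum_s|f_s|^2\bigr)^{1/2}}_{L^p(\R^d)}
\]
uniformly in $u>0$. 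I would then dualize in $L^p(\CC^d;\ell^2(I\times\{1,\ldots,S\}))$: fix $\varepsilon\in(0,1)$ and $\psi\in\mS(\R^d)$ with $\|\psi\|_{L^q(\R^d)}=1$ and $\langle\eta,\psi\rangle_{L^2(\R^d)}\ge 1-\varepsilon$, and test against $G_{s,j}=g_{s,j}\otimes\psi$ for $g_{s,j}\in\mS(\R^d)$. Plancherel on $\R^{2d}$ together with Lebesgue's dominated convergence theorem applied on the Fourier side (with the integrable majorant $|\hat F_s|\,|\hat G_{s,j}|$) yields
\[
\lim_{u\to 0^+}\langle\tR_j^u F_s, G_{s,j}\rangle_{L^2(\CC^d)}=\langle R_j f_s,g_{s,j}\rangle_{L^2(\R^d)}\,\langle\eta,\psi\rangle_{L^2(\R^d)}.
\]
Summing over $s,j$, passing to the limit $u\to 0^+$ in the dualized inequality, letting $\varepsilon\to 0^+$, and invoking density of Schwartz functions produces \eqref{eq:Rve}. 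The scalar-valued inequality \eqref{eq:Rvs} is obtained by the identical procedure applied to \eqref{eq:tRvs}.

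The main obstacle is making the limit identification $\tR_j^u(f\otimes\eta)\to (R_j f)\otimes\eta$ rigorous in the weak sense used above; this hinges on the uniform $L^\infty$ bound and pointwise a.e.\ convergence of the multipliers $m_j^u$, both of which follow from the elementary estimate $|P_j(\zeta)|\le |\zeta|^k$ for $j\in I$. All remaining ingredients---the isometric transference under $\delta_u$, the tensorization $\|f\otimes\eta\|_{L^p(\CC^d)}=\|f\|_{L^p(\R^d)}\|\eta\|_{L^p(\R^d)}$, and the standard duality of mixed $L^p(\ell^2)$ spaces---are routine, and no constants are lost along the way, so \eqref{eq:Rve} and \eqref{eq:Rvs} inherit verbatim the $p$-, $q$- and $k$-dependence of \eqref{eq:tRve} and \eqref{eq:tRvs}.
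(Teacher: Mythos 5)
Your argument is correct and follows essentially the same route as the paper: the authors simply invoke the restriction inequality \cite[eq.\ (45)]{iwaniec_martin} applied to the vector-valued multiplier operator $\{\tR_j\}_{j\in I}$, noting via \eqref{eq:m} that its restriction is $\{R_j\}_{j\in I}$, while you have unfolded the dilation-and-tensor-product mechanism that underlies that very reference. The key points you identify (uniform $L^\infty$ bound and pointwise convergence of $m_j^u$, tensorization against $\eta$, duality against $g_{s,j}\otimes\psi$ together with $\langle\eta,\psi\rangle\to 1$) are exactly the ingredients of the cited Iwaniec--Martin transference, so no constants are lost and the $p,q,k$ dependence carries over verbatim.
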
 

\begin{proof}
	In contrast to the proofs of \thref{thm1tres} and \thref{thm2tres} here we apply the methods from \cite[Section 4]{iwaniec_martin} in a direct way. Therefore we shall be brief. Let $n=k=d$ and identify $\CC^d$ with $\R^{2d}.$  
	
	For the proof \eqref{eq:Rve} we take $E=\ell^2(\{1,\ldots,S\})$ and $F=\ell^2(\{1,\ldots,S\}\times I).$ The operator ${\bf T}$  is defined by
	\[
		{\bf T}(\{f_s\}_{s=1,\ldots,S})= \{\tR_j(f_s)\}_{(s,j)\in \{1,\ldots,S\}\times I}.
	\]
	Using \eqref{eq:m} for $P(z)=z_{j_1}\cdots z_{j_k}$ one can check that the restricted operator ${\bf T}_0$ is then  
	\[
		{\bf T}_0(\{f_s\}_{s=1,\ldots,S})= \{R_j(f_s)\}_{(s,j)\in \{1,\ldots,S\}\times I}.
	\]
	Hence, \cite[eq.\ (45)]{iwaniec_martin} together with \eqref{eq:tRve} lead to \eqref{eq:Rve}.
	
	The proof of \eqref{eq:Rvs} is similar. We take $E=\CC$ and  $F=\ell^2(I).$ The operators ${\bf T}$ and $\bf{T}_0$ are defined as above. The desired inequality follows from \cite[eq.\ (45)]{iwaniec_martin} together with \eqref{eq:tRvs}.
\end{proof}

We are finally ready to justify \thref{thm1quat,thm2quat}. At this point the proofs mimic the corresponding proofs of  \thref{thm1c'',thm2c''}. Therefore we shall be brief and only point out the differences.

\begin{proof}[Proof of \thref{thm1quat}]
	We proceed analogously to the proof of \thref{thm1c''} on p.\ \pageref{p:pfthm1c''}. In particular, we replace $\CC^d$ with $\R^d,$ $\tR_{j}^{t_n}$ with $D_{j}^{t_n}$ and $\tR_j$ with $R_j.$ The most important difference is that \eqref{eq:Djtrot} replaces \eqref{eq:rot2}. This leads to the replacement of \eqref{eq:tRtfHz} by
	\begin{equation}
		\label{eq:DtfHx}
	D^t f(x)= -\frac{2\Gamma\left( \frac{k+d}{2}\right)}{\Gamma\left( \frac{k}{2}\right)\Gamma\left( \frac{d}{2}\right)}\int_{S^{d-1}}  \mH^t_{\omega}\left[\sum_{j\in I} \omega_j R_j f \right](x)\, d\omega.
	\end{equation} 
In the proof we also use \eqref{eq:Rve} in place of \eqref{eq:tRve} and  \thref{pro:mhzt} instead of \thref{pro:hzt}. 

\end{proof}

\begin{proof}[Proof of \thref{thm2quat}]
	We proceed analogously to the proof of \thref{thm2c''} on p.\ \pageref{p:pfthm2c''} making the replacements as in the proof of \thref{thm1quat}. In particular we use \eqref{eq:DtfHx},  \eqref{eq:Rvs}, and \thref{pro:mhzt}.
\end{proof}

\section{Appendix}
\label{sec:app}

\begin{proof}[Proof of \thref{pro:hzt}]
	A (complex) rotational invariance argument reduces the inequality to its one-dimensional case
	\begin{equation*}
		\norm{\left(\sum_{s=1}^S \abs{H^* f_s}^2\right)^{1/2}}_{L^p(\CC)} \lesssim p^* \norm{ 	\left(\sum_{s=1}^S \abs{f_s}^2\right)^{1/2}}_{L^p(\CC)}.
	\end{equation*}
	Here 
	\[
		H^*f(z):=\sup_{t\in \mathbb{Q}_+} |H_{k}^tf(z)|,\qquad \textrm{with}\quad 	H_{k}^tf(z)=\int_\CC \left( \frac{\lambda}{\abs{\lambda}} \right)^k \frac{f(z-\lambda )}{\abs{\lambda}^2} \ind{|\la|\ge t}(\lambda) \, d\lambda
	\]
	is the $k$-th power of the complex Hilbert transform on $\CC.$

	We split the operator $H^*$ into two parts. To this end let $\varphi: \CC \to \R$ be a smooth radial function satisfying $\varphi(z) = 1$ for $\abs{z} < 2$, $\varphi(z) = 0$ for $\abs{z} > 4$. Define $\varphi_t(z) = \varphi(z/t)$ and let
	\[
		\chi_t(z) = \left( \frac{z}{\abs{z}} \right)^k \frac{1}{\abs{z}^2} \ind{\abs{z} \geqslant t}
	\]
	be the kernel of $H^t_k$.
	Then
	\begin{align*}
		H^*f(z) &\leqslant \sup_{t>0} \abs{(\varphi_t\chi_t * f)(z)} + \sup_{t>0} 	\abs{((1-\varphi_t)\chi_t * f)(z)} \\
		&\eqqcolon H_\varphi^* f(z) + H_{1-\varphi}^* f(z) \\
		&\lesssim \mathcal{M}f(z) + H_{1-\varphi}^* f(z),
	\end{align*}
	where $\mathcal{M}$ denotes the Hardy--Littlewood maximal operator on $\R^2$. Since \cite[Theorem 5.6.6]{grafakos} gives us vector-valued estimates for $\mathcal M$ we get
	\[
		\norm{\left(\sum_{s=1}^S \abs{H_{\varphi}^* f_s}^2\right)^{1/2}}_{L^p(\CC)} \lesssim p^* 	\norm{ \left(\sum_{s=1}^S \abs{f_s}^2\right)^{1/2}}_{L^p(\CC)}.
	\]
	The remaining ingredient is to prove
	\begin{equation} \label{eq:H1phi}
		\norm{\left(\sum_{s=1}^S \abs{H_{1-\varphi}^* f_s}^2\right)^{1/2}}_{L^p(\CC)} \lesssim p^* 	\norm{ \left(\sum_{s=1}^S \abs{f_s}^2\right)^{1/2}}_{L^p(\CC)}.
	\end{equation}
	We will apply \cite[Theorem 5.6.1]{grafakos} with
	\[
		\mathcal{B}_1 = \ell^2\left(\{1, \dots, S\} \right) \qquad \textrm{and} \qquad \mathcal{B}_2 = \ell^2\left(\{1, \dots, S\}; L^\infty{(\mathbb{Q}_+)} \right)
	\]
	and
	\begin{equation} \label{eq:kernelK}
		\vec{K}(z)(u) = \left( (1-\varphi_{t}) \chi_{t}(z) \cdot u_1 ,\ldots, (1-\varphi_{t}) 	\chi_{t}(z) \cdot u_S\right) \in \mathcal{B}_2
	\end{equation}
	for any sequence $u=(u_s)_{s=1}^S \in \mathcal{B}_1$. Then, taking $e_s = (0, \dots, 1, \dots, 0)$, with $1$ on the $s$-th coordinate, we see that the operator $\vec{T}$ defined in \cite[5.6.4]{grafakos} satisfies
	\begin{equation} \label{eq:def_T}
		\vec{T}\left( \sum_{s=1}^S f_s e_s \right)(z) = \left(H^{t}_{1-\varphi} 	f_1(z),\ldots,H^{t}_{1-\varphi} f_S(z)\right)
	\end{equation}
	and
	\[
		\norm{\vec{T}\left( \sum_{s=1}^S f_s  e_s \right)(z)}_{{\mathcal{B}_2}} = \left(\sum_{s=1}^S \abs{H^*_{1-\varphi} f_s(z)}^2 \right)^{1/2}
	\]
	for any sequence $(f_s)_{s=1}^S$ of smooth functions that vanish at infinity.
	In order to use \cite[Theorem 5.6.1]{grafakos} we need to verify conditions (5.6.1), (5.6.2) and (5.6.3) from \cite{grafakos} and check that $\vec{T}$ is bounded from $L^2(\CC, \mathcal{B}_1)$ to $L^2(\CC, \mathcal{B}_2)$.
	
	Condition (5.6.1) is a straightforward consequence of \eqref{eq:kernelK}. It is also not hard to verify that $\int_{\varepsilon \le |z|\le 1}	\vec{K}(z)\,dz=0,$ so that condition (5.6.3) is satisfied with $\vec{K}_0=0$.

	We shall now justify (5.6.2). Denote $\widetilde{\varphi}_t \coloneqq 1-\varphi_t$ and $g_t = \widetilde{\varphi}_t \chi_t$ so that
		\[
	g_t(z) = \widetilde{\varphi}_t(z) \frac{z^k}{\abs{z}^{k+2}}.
	\]
Since
	\[
		\norm{\vec{K}(z-w) - \vec{K}(z)}_{\mathcal{B}_1 \to \mathcal{B}_2} = \sup_{t>0} 	\abs{g_t(z-w) - g_t(z)},
	\]
	we have
	\begin{align} \label{eq:supphi}
		&\norm{\vec{K}(z-w) - \vec{K}(z)}_{\mathcal{B}_1 \to \mathcal{B}_2} = \sup_{t>0} \abs{\widetilde{\varphi}_t(z-w) 	\frac{(z-w)^k}{\abs{z-w}^{k+2}} - \widetilde{\varphi}_t(z) \frac{z^k}{\abs{z}^{k+2}}} \nonumber \\
		&\leqslant \sup_{t>0} \abs{\left(\widetilde{\varphi}_t(z-w) - \widetilde{\varphi}_t(z) 	\right) \frac{(z-w)^k}{\abs{z-w}^{k+2}}} + \sup_{t>0} \abs{ \widetilde{\varphi}_t(z) \left( \frac{(z-w)^k}{\abs{z-w}^{k+2}} - \frac{z^k}{\abs{z}^{k+2}} \right)}.
	\end{align}

Hence, the proof of (5.6.2) boils down to estimating the two terms in \eqref{eq:supphi} under the assumption $|z|\ge 2|w|.$  We begin with the first term. Since $|z|\ge 2|w|$ we have $|z|\approx \abs{z-w}.$ Hence, in order for the expression inside the absolute value to be nonzero, $t$ has to be comparable to $\abs{z}$ and $|z-w|.$  In that case, using the smoothness of $\varphi$ we obtain
	\begin{align*}
		\abs{\left(\widetilde{\varphi}_t(z-w) - \widetilde{\varphi}_t(z) \right) \frac{(z-w)^k}{\abs{z-w}^{k+2}}} &\lesssim \frac{\abs{w}}{2t}\frac{1}{\abs{z-w}^{2}} \approx \frac{\abs{w}}{\abs{z}\abs{z-w}^{2}} \approx \frac{\abs{w}}{\abs{z}^3}.
	\end{align*}
	In the second term of \eqref{eq:supphi} we omit $\widetilde{\varphi}_t$ and get
	\begin{align*}
		&\abs{\frac{(z-w)^k}{\abs{z-w}^{k+2}} - \frac{z^k}{\abs{z}^{k+2}}} \leqslant \abs{\frac{(z-w)^k}{\abs{z-w}^{k+2}} - \frac{(z-w)^k}{\abs{z}^{k+2}}} + \abs{\frac{(z-w)^k}{\abs{z}^{k+2}} -  \frac{z^k}{\abs{z}^{k+2}}} \\
		&= \abs{z-w}^k \frac{\abs{\abs{z}^{k+2} - \abs{z-w}^{k+2}}}{\abs{z-w}^{k+2}\abs{z}^{k+2}} + \frac{1}{\abs{z}^{k+2}} \abs{(z-w)^k - z^k} \approx \frac{\abs{w}}{\abs{z}^3}.
	\end{align*}
	This means that we have proved that
	\[
		\norm{\vec{K}(z-w) - \vec{K}(z)}_{\mathcal{B}_1 \to \mathcal{B}_2} \lesssim \frac{\abs{w}}{\abs{z}^3}
	\]
	for $\abs{z} \geqslant 2\abs{w}$. Integrating this yields
	\[
		\int_{\abs{z} \geqslant 2\abs{w}} \norm{\vec{K}(z-w) - \vec{K}(z)}_{\mathcal{B}_1 \to 	\mathcal{B}_2} dz \lesssim \abs{w} \int_{\abs{z} \geqslant 2\abs{w}} \frac{1}{\abs{z}^3} dz \approx 1
	\]
	so that condition (5.6.2) is satisfied. 
	
	It remains to justify the boundedness of $\vec{T}$ from $L^2(\CC, \mathcal{B}_1)$ to $L^2(\CC, \mathcal{B}_2).$ We have the pointwise bound
	\[
		H^*_{1-\varphi}f(z) \lesssim \mathcal{M}f(z) + H^* f(z).
	\]
	Therefore the desired $L^2$ boundedness of $\vec{T}$ is  a consequence of \eqref{eq:def_T} and the $L^2(\CC)$ boundedness of $H^*$. This allows us to use \cite[Theorem 5.6.1]{grafakos} and completes the proof of \eqref{eq:H1phi} hence also the proof of \thref{pro:hzt}.
\end{proof}

\end{document}